\definecolor{Teal}{rgb}{0,0.784,0.784} 
\newcommand{\SD}{\mathrm{SD}}
\newcommand{\vv}{\mathbf{v}}
\newcommand{\includesymbol}[1]{\ensuremath{%
	\mathchoice
		{\raisebox{-.1mm}{\includegraphics[height=2ex]{#1}}}
		{\raisebox{-.1mm}{\includegraphics[height=2ex]{#1}}}
		{\raisebox{-.6mm}{\includegraphics[height=2ex]{#1}}}
		{\raisebox{-.5mm}{\includegraphics[height=2ex]{#1}}}
}} 
\newcommand{\bridge}{\includesymbol{Bridge}}
\newcommand{\Bridge}{{\bridge}}
\newtheorem{theorem}{Theorem}
\newtheorem{conjecture}[theorem]{Conjecture}
\newtheorem{lemma}[theorem]{Lemma}
\newtheorem{proposition}[theorem]{Proposition}
\theoremstyle{definition}
\newtheorem{definition}[theorem]{Definition}
\newtheorem{example}[theorem]{Example}
\title{Orbits of toric promotion on bridge sums}
\author{Kerry Seekamp}
\address{Department of Mathematical Sciences, Smith College, Northampton, MA 01063, \href{mailto:kseekamp@smith.edu}{kseekamp@smith.edu}}
\begin{document}


\begin{abstract}
    In 2023, Defant introduced \textit{toric promotion} as a cyclic analogue of Sch\"utzenberger's well known promotion operator. Toric promotion is defined by a choice of simple graph $G$ and acts on the labeling of $G$ by a series of involutions. Defant described the orbit length of toric promotion on trees and showed that it does not depend on the initial labeling; we prove an analogous result for complete graphs. A natural question is how toric promotion behaves under certain graph operations. In the main results of this article, we analyze the orbits of toric promotion under the \textit{bridge sum} graph operation, which joins two graphs by adding an edge between a vertex of each graph. We show that the orbit length of toric promotion on any graph constructed via a bridge sum of a tree or a complete graph with a simple graph does not depend on the restriction of the initial labeling to the tree or complete subgraph. Additionally, we describe the orbit lengths of toric promotion on the bridge sums of two complete graphs and the bridge sums of a tree with a complete graph, and show that they do not depend on the initial labeling. Finally, we describe the orbit length of toric promotion on the corona product of a complete graph with any tree, and show that it does not depend on the initial labeling.
    
\end{abstract}

\maketitle
\section{Introduction}

{\color{blue}Toric promotion} was first introduced by Defant \cite{D2023} as a cyclic analogue of Sch\"utzenberger's \cite{Schutzenberger63, Schutzenberger72} well studied promotion operator. Sch\"utzenberger's promotion is defined by a choice of finite poset and acts on the linear extensions of that poset; toric promotion is defined by a choice of {\color{blue}simple graph}, an undirected, unweighted graph with no self-loops and no multiple edges,  and acts on the labeling of that graph. Specifically, toric promotion permutes the labels of the chosen graph by a series of involutions that depend on adjacency within the chosen graph. 

Much has is known about which posets behave nicely under Sch\"utzenberger's promotion \cite{stanley2008,HopkinsRubey2022}. Moreover, the orbits of Sch\"utzenberger's promotion have been extensively studied in the context of Kuperberg's $\mathfrak{sl}_3$ webs \cite{Kuperberg96}, where promotion corresponds to rotation of the web graph \cite{Tymoczko2012,PPR09}. In the main results of this article, we analyze the behavior of toric promotion under the bridge sum graph operation. 

We will begin by defining toric promotion. Let $G=(V,E)$ be a simple graph on $\nu$ vertices. A {\color{blue}labeling} $\sigma$ of $G$ is a bijection $V \rightarrow \mathbb{Z}/\nu \mathbb{Z}$, where $\mathbb{Z}/ \nu \mathbb{Z} = \{1,2,\ldots,\nu\}$ and addition is done modulo $\nu$. We denote the set of labelings of $G$ by $\Lambda_G$. In this article, we write all labelings as permutations in one-line notation. Furthermore, a {\color{blue}state} of toric promotion is a pair $(\sigma, i)$, where $\sigma \in \Lambda_G$ is a labeling, and $i\in \mathbb{Z}/ \nu \mathbb{Z}$ is a label of a vertex in $V$. We denote the set of all possible states of toric promotion on $G$ by $\Omega_G$.

\begin{definition}Toric promotion is the map $\text{TPro}: \Omega_G \rightarrow \Omega_G$ such that 
\[\text{TPro}(\sigma,i)=
\begin{cases*}
(\sigma, i+1) & if $\{\sigma^{-1}(i),\sigma^{-1}(i+1)\} \in E;$\\
((i,i+1)\circ \sigma,i+1) & if  $\{\sigma^{-1}(i),\sigma^{-1}(i+1)\}\not \in E,$
\end{cases*}\]

where $(i, i+1)$ is the involution that swaps the labels $i$ and $i+1$.
\end{definition}

In other words, to perform toric promotion on a simple graph $G$, we look at the labels $i$ and $i+1$. If $\sigma$ assigns $i$ and $i+1$ to vertices of $G$ that are
    \begin{itemize}
        \item adjacent, then we update $i$ to $i+1$, and leave the labeling $\sigma$ unchanged;
        \item  otherwise, we update $i$ to $i+1$, and update $\sigma$ by swapping the labels $i,i+1$.
    \end{itemize}
    
The {\color{blue}length} of the orbit of toric promotion containing the initial state $(\sigma_0,i_0)$ is the minimum number of iterations of toric promotion that are required to get from $(\sigma_0,i_0)$ back to $(\sigma_0,i_0)$. Notice that $\text{TPro}$ is a permutation, and thus bijective. Hence, every element belongs to some orbit. We give an example of an orbit of toric promotion in Figure~\ref{EX: toric promotion orbit}.

\begin{figure}[H]
\begin{center}
    \includegraphics[width=.9\linewidth]{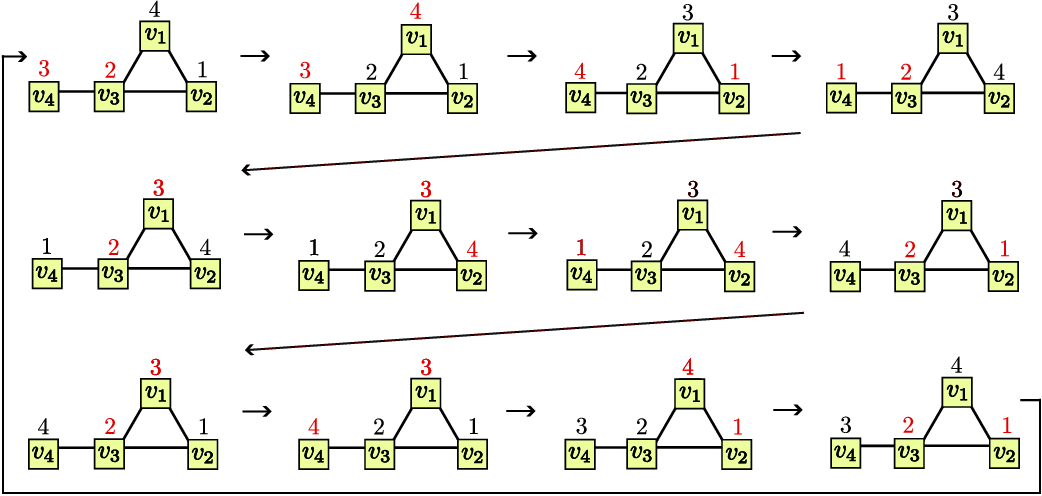}
\end{center}
\vspace{.3cm}
\caption{Let $G$ be the graph on $4$ vertices shown in yellow. We illustrate the orbit of toric promotion on $G$ containing the initial state $(4123,2)$. Each copy of $G$ shows a state of this orbit with the labels $i$ and $i+1$ shown in red.}
\label{EX: toric promotion orbit}
\end{figure}

Defant \cite{D2023} describes the orbit length of toric promotion on any tree graph as follows.

\begin{theorem} [\cite{D2023}] \label{THM: Tree}
Let $T$ be any tree on $m$ vertices. Then all orbits of toric promotion on $T$ have length $m(m-1)$. 
\end{theorem}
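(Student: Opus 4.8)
The plan is to decouple the two coordinates of a state. Since the second coordinate of $\mathrm{TPro}(\sigma,i)$ is always $i+1$, the index behaves as a clock of period $m$: a state $(\sigma,i)$ can return to itself only after a number of steps divisible by $m$. Iterating $\mathrm{TPro}$ exactly $m$ times therefore fixes the index and induces a well-defined map $P\colon \Lambda_T\to\Lambda_T$ on labelings, namely the composite of the $m$ conditional label-swaps encountered in one revolution of the clock. Because the maps obtained by beginning the revolution at different indices are first-return maps of the single bijection $\mathrm{TPro}$ to different cross-sections of a common cycle, they are conjugate, so their cycle structures agree; hence the orbit length of $(\sigma,i)$ equals $m\cdot \mathrm{ord}_P(\sigma)$, where $\mathrm{ord}_P(\sigma)$ is the size of the $P$-orbit of $\sigma$. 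Thus the theorem is equivalent to the assertion that \emph{every orbit of $P$ on $\Lambda_T$ has size exactly $m-1$}; as a consistency check, this would partition the $m!$ labelings into $m\,(m-2)!$ orbits.

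To study $P$, I would reformulate a labeling as a placement of tokens $1,\dots,m$ on the vertices of $T$ and view one revolution as a sequence of conditional adjacent transpositions: for $i=1,\dots,m$ (indices mod $m$), exchange the tokens $i$ and $i+1$ precisely when they occupy non-adjacent vertices of $T$. The acyclicity of $T$ (a unique path between any two vertices, and only $m-1$ edges) is the structural feature that should force a rigid common period. Concretely, I would aim to pin down the order of $P$ through two complementary statements:
\[
\text{(A)}\quad P^{\,m-1}=\mathrm{id},\qquad\qquad \text{(B)}\quad P^{k}\ \text{has no fixed point for }0<k<m-1.
\]
Statement (A) bounds every orbit size by a divisor of $m-1$, while (B) forbids any shorter period; together they pin every orbit size to exactly $m-1$. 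A convenient way to package (B) is to construct a statistic $\phi\colon\Lambda_T\to\mathbb{Z}/(m-1)\mathbb{Z}$ with $\phi\circ P=\phi+1$, since such a $\phi$ immediately forces the order of any $\sigma$ to be a multiple of $m-1$.

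For (A), a promising route is induction on the number of vertices using a leaf of $T$, tracking that the token on a leaf can only be swapped with the tokens whose values are adjacent to its own, and using the unique-path structure to argue that all tokens are simultaneously restored after $m-1$ revolutions; the subtlety to watch is that deleting a leaf changes the modulus from $m$ to $m-1$, so the inductive hypothesis must be transported carefully across this change. For (B), or equivalently for the statistic $\phi$, I would look for a quantity built from comparing the cyclic label-adjacencies $(1,2),\dots,(m,1)$ against the edge set of $T$ that advances in a controlled fashion each revolution; a naive candidate such as the per-revolution swap count advances correctly in some cases but fails already on small paths, so the correct invariant must be more global.

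I expect the determination of the common period to be the main obstacle: proving the sharp upper bound $P^{\,m-1}=\mathrm{id}$ and, especially, ruling out accidental short periods uniformly over all labelings and all trees. The reduction in the first paragraph is routine and the token reformulation is mechanical; it is the rigidity of the period — ultimately tied to the single numerical fact that a tree on $m$ vertices has $m-1$ edges — that carries the real weight, and exhibiting the statistic $\phi$ (or an equivalent free $\mathbb{Z}/(m-1)\mathbb{Z}$-symmetry of $P$) is the creative heart of the argument.
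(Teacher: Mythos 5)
Your reduction in the first paragraph is correct and matches the standard setup: the index is a period-$m$ clock, the first-return maps $P_i$ to the cross-sections are conjugate, and the theorem is equivalent to every $P$-orbit on $\Lambda_T$ having size exactly $m-1$ (your count $m!/(m-1)=m\,(m-2)!$ checks out). But from that point on the proposal is a research plan, not a proof: statements (A) $P^{m-1}=\mathrm{id}$ and (B) the absence of shorter periods are precisely the content of the theorem, and neither is established. For (A) you name leaf-deletion induction and immediately flag the obstruction (the modulus drops from $m$ to $m-1$, so the inductive hypothesis does not transport) without resolving it; for (B) you posit a statistic $\phi$ with $\phi\circ P=\phi+1$ and then report that the natural candidate fails on small paths and that the correct invariant is unknown to you. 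Since you yourself identify these as ``the creative heart of the argument,'' the gap is genuine: the entire rigidity of the period is asserted, not proved.

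For comparison, this paper does not prove \Cref{THM: Tree} --- it quotes it from \cite{D2023} --- but the machinery it does develop (the stone/coin reformulation and \Cref{ADStreelemmma}, adapted in \Cref{LEM:TreeBridgeSimple}) shows what a working argument looks like, and it is structured quite differently from your (A)/(B) split. One fixes an edge $\{v_l,v_{l'}\}$ and proves, by induction on $\eta_{l,l'}$ (the size of the far side of the edge), a single quantitative statement: after the coin crosses the edge it returns in exactly $\eta_{l,l'}(m-1)$ steps, during which every ordered pair of replicas with first coordinate on the far side meets exactly once, and the resulting stone diagram is an explicit cyclic rotation of the starting one with two replicas swapped. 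Summing this over the depth-first traversal of the tree gives the exact orbit length $m(m-1)$ in one pass --- the upper and lower bounds on the period come out simultaneously from the exact step count, so no separate fixed-point-freeness statistic is ever needed. If you want to complete your approach, the honest advice is that the local, edge-by-edge induction with an exact return time is the mechanism that makes the tree structure (your ``$m-1$ edges'' heuristic) do actual work; a global order-of-$P$ argument without it has no obvious foothold.
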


Notice that this expression does not depend on the initial labeling. In our first result, we give the orbit length of toric promotion on any complete graph and show that it is independent of the initial labeling. For convenience, throughout this article, $m$ will always be the number of vertices in a tree, and $n$ will always be the number of vertices in a complete graph. 

\begin{proposition}\label{THM:CompleteGraphs}
Let $K$ be the complete graph on $n$ vertices. Then all orbits of toric promotion on $K$ have length $n$.
\end{proposition}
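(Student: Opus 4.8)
The plan is to observe that the completeness of $K$ forces toric promotion to act by the first (label-preserving) case of the definition at every step, so that the dynamics reduce to simply incrementing the index $i$. Concretely, let $K = K_n$ be the complete graph on $n$ vertices, and let $(\sigma, i) \in \Omega_K$ be any state. Since $\sigma^{-1}(i)$ and $\sigma^{-1}(i+1)$ are distinct vertices of $K$, and every pair of distinct vertices of a complete graph is joined by an edge, we have $\{\sigma^{-1}(i), \sigma^{-1}(i+1)\} \in E$ for every labeling $\sigma$ and every $i \in \mathbb{Z}/n\mathbb{Z}$. Thus the first case in the definition of $\text{TPro}$ always applies, giving
\[
\text{TPro}(\sigma, i) = (\sigma, i+1).
\]

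With this in hand, the remainder is a direct computation. First I would iterate the identity above to conclude that $\text{TPro}^k(\sigma, i) = (\sigma, i+k)$ for all $k \geq 0$, where the index is read modulo $n$; in particular the labeling component $\sigma$ is never altered. Next I would identify the orbit length as the least positive $k$ with $(\sigma, i+k) = (\sigma, i)$, i.e.\ the least positive $k$ with $i + k \equiv i \pmod{n}$, which is exactly $k = n$. Since the initial labeling $\sigma$ and the initial index $i$ were arbitrary, every orbit has length $n$, independent of the initial state.

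There is no real obstacle here: the content of the statement is precisely the elementary remark that in a complete graph the adjacency condition in the definition of $\text{TPro}$ is automatically satisfied, which collapses the potentially complicated interaction between the involutions $(i,i+1)$ and the increment of $i$ into pure rotation of the index. The only point requiring any care is confirming that the $n$ states $(\sigma, i), (\sigma, i+1), \ldots, (\sigma, i+n-1)$ are genuinely distinct, which holds because $i, i+1, \ldots, i+n-1$ exhaust the distinct residues modulo $n$; this guarantees the orbit length is exactly $n$ rather than a proper divisor of $n$.
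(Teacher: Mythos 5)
Your proposal is correct and follows essentially the same argument as the paper: completeness forces the adjacency case of the definition at every step, so $\sigma$ is never altered and the state simply cycles through the $n$ index values. Your added remark that the $n$ states are distinct (ruling out a proper divisor of $n$ as the orbit length) is a small point of extra care the paper leaves implicit.
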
 

Since toric promotion is defined by a choice of graph, a natural question is how the orbits of toric promotion behave under certain graph operations. In the main results of this paper, we describe the behavior of toric promotion under the {\color{blue}bridge sum} graph operation, which joins two graphs by adding an edge between a vertex of each graph. The added edge is a bridge in the resulting graph, giving rise to the name bridge sum.

\begin{definition}
\label{DEF: bridge sum}
    Let $G_1=(V_1,E_1)$ and $G_2=(V_2,E_2)$, such that $v_i\in V_1$ and $v_j\in V_2$. Then the \textit{bridge sum} $G_1(v_i)\Bridge G_2(v_j)$ is the graph obtained by adding the edge $\{v_i,v_j\}$. In other words, the bridge sum $G_1(v_i)\Bridge G_2(v_j)$ is the graph with vertex set $V'=V_1 \cup V_2$ and edge set $E'=E_1\cup E_2\cup\{v_i,v_j\}$. 
\end{definition}
We show an example of a bridge sum in \Cref{Fig:BridgeSum}.

\begin{figure}[H]
\begin{center}
 \includegraphics[width=\linewidth]{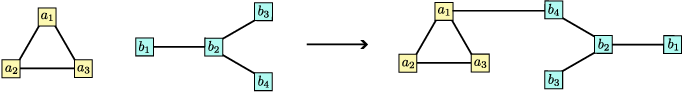}
\end{center}
\caption{Let $G_1$ be the cycle graph on $3$ vertices and let $G_2$ be a tree on $4$ vertices. On the left, we show $G_1$ (yellow) and $G_2$ (blue). On the right, we show $G_1(a_1)\Bridge G_2(b_4)$.}
\label{Fig:BridgeSum}
\end{figure}

In the following results, we consider graphs constructed by a bridge sum (at any two vertices) of a simple graph and either a tree or a complete graph. We show that the orbit length of toric promotion on graphs of this form does not depend on the restriction of the initial labeling to the tree or complete subgraph.

\begin{theorem}
Fix $m$ and a simple graph $S$. Let $G$ be a bridge sum of $S$ with an $m$-vertex tree $T$, and let $\sigma$ be any labeling of $G$. Then the length of the orbit of toric promotion containing the state $(\sigma,i)$ depends only on $m$, $S$, and $\phi$, the restriction of $\sigma$ to $S$ (in particular, it does not depend on the specific choice of $T$).
\label{THM:SimpleBridgeTree}
\end{theorem}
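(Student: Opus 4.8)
The plan is to prove the statement by a conjugation (coupling) argument: I will show that if one fixes $m$, $S$, and $\phi$ but varies the tree $T$, the bridge attachment, and the restriction of $\sigma$ to $T$, the resulting toric promotion systems are conjugate in a way that matches orbits, so their orbit lengths must agree. Write $\nu = m + |V(S)|$, let $a \in V(S)$ and $b \in V(T)$ be the endpoints of the bridge in $G = S(a)\Bridge T(b)$, and note that $\{a,b\}$ is the \emph{unique} edge of $G$ joining $V(S)$ to $V(T)$. Along a trajectory I would track the restricted labeling $\sigma|_S$, the current index $i$, the set $L_T = \sigma(V(T))$ of labels occupying the tree (which has constant size $m$, since any swap across the bridge exchanges exactly one $S$-label for one $T$-label), and the label $\beta := \sigma(b)$ at the bridge vertex. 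The guiding idea is that, from the viewpoint of $S$, the entire tree ought to behave as a single ``interface'' sitting at $b$, so that its internal shape is invisible to the orbit length.

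First I would analyze a single step of $\mathrm{TPro}$ and classify it by the sides of the active labels $i$ and $i+1$. When both lie in $V(S)$, the step is determined by $\sigma|_S$ and adjacency in $S$ alone; when the pair crosses the bridge (one side in $V(S)$, one in $V(T)$, and not the exempt pair $\{\alpha,\beta\}$, where $\alpha := \sigma(a)$) a token is transferred, updating $L_T$ and possibly $\beta$ in a way again readable from the tracked data. The only genuinely tree-dependent steps are those with both active labels in $V(T)$: whether such a pair swaps, and hence how $\beta$ evolves, depends on adjacency in $T$ near $b$. Consequently the reduced data $(\sigma|_S, i, L_T, \beta)$ does \emph{not} evolve autonomously, and this is precisely the obstruction the proof must overcome.

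To resolve it I would invoke \Cref{THM: Tree}, in the refined form that the internal dynamics of an $m$-vertex tree is \emph{homogeneous}: not only do all tree labelings lie in orbits of the common length $m(m-1)$, but the subprocess induced on $V(T)$ between successive cross-bridge events can be realized identically for any two $m$-vertex trees and any two internal labelings agreeing with the data fed across $b$. Granting this, I would construct an explicit bijection between the states of the system on $G = S(a)\Bridge T(b)$ and those on $G' = S(a)\Bridge T'(b')$ that fixes $(\sigma|_S, i)$ and transports the tree part through the conjugacy supplied by tree-homogeneity; verifying that this bijection intertwines the two copies of $\mathrm{TPro}$ then forces equal orbit lengths. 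The same device, applied with $T' = T$ but a different internal labeling, yields independence from the restriction of $\sigma$ to $T$, and applied with different bridge endpoints yields independence from the attachment, leaving a quantity that depends only on $m$, $S$, and $\phi$.

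The main obstacle is exactly the construction and verification of this cross-bridge conjugacy: I must show that the \emph{schedule} of transfers---the times at which, and the labels with which, tokens move between $S$ and $T$---is determined by $m$, $S$, and $\phi$ alone, independently of the internal structure of the tree. This requires upgrading \Cref{THM: Tree} from a statement about orbit lengths to one about the full label trajectories of tree promotion, so that the interface behavior at the distinguished leaf $b$ can be pinned down, and I expect essentially all of the difficulty to lie here. Once the interface schedule is known to be tree-independent, matching the orbits and deducing the conclusion should be routine.
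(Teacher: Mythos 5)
Your plan correctly identifies both the right reduction (the tree should act as a black box whose only visible effect on $S$ is mediated through the bridge vertex) and the right obstruction (the tree-internal steps are the only genuinely $T$-dependent ones), and this is indeed the skeleton of the paper's argument. But the proposal has a genuine gap at exactly the point you flag as the main obstacle: the ``refined, homogeneous form'' of \Cref{THM: Tree} that you want to invoke does not exist and cannot be extracted from that theorem. \Cref{THM: Tree} concerns toric promotion on a standalone $m$-vertex tree with labels in $\mathbb{Z}/m\mathbb{Z}$; inside the bridge sum the tree's vertices carry $m$ of the $N=m+|V(S)|$ labels, the active pair $(i,i+1)$ routinely straddles the bridge, and the excursion of the coin into $T$ lasts $m(N-1)$ steps (the coin spends $N-1$ steps at each tree vertex), not $m(m-1)$. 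So the ``interface schedule'' you need is not a consequence of the standalone tree result in any form; it must be established from scratch, and your proposal assumes it rather than proving it.

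The paper supplies exactly this missing piece as \Cref{LEM:TreeBridgeSimple}, an adaptation of a lemma of Adams, Defant, and Striker: if the coin crosses the bridge into the tree at time $t$, it returns at time $t+m(N-1)$; each pair consisting of a tree replica and any other replica is active exactly once during the excursion; and the stone diagram immediately afterwards is a cyclic rotation of the diagram obtained from $\SD_t$ by simply transposing the two bridge-endpoint labels. Proving this requires an induction on $m$ that tracks the depth-first-search behavior of the coin through the neighbors of the bridge endpoint, carried out in the stone-and-coin reformulation of toric promotion; it constitutes essentially all of the technical content of the theorem. Once that lemma is in hand, your conjugacy framing does finish the argument (it is equivalent to the paper's observation that the excursion is indistinguishable, up to cyclic rotation, from deleting the bridge edge and swapping the two endpoint labels, after which the tree is a disconnected component whose labels are irrelevant), but as written the hard step is deferred rather than carried out.
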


\begin{theorem}
Fix $n$ and a simple graph $S$. Let $G$ be a bridge sum of $S$ with the complete graph on $n$ vertices $K$, and let $\sigma$ be any labeling of $G$. Then the length of the orbit containing the state $(\sigma,i)$ depends only on $n$, $S$, and $\phi$, the restriction of $\sigma$ to $S$.
\label{THM:SimpleBridgeComplete}
\end{theorem}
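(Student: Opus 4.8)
The plan is to show that toric promotion on $G = S(v_i) \Bridge K(v_j)$ behaves, when we restrict attention to the labels sitting inside the complete subgraph $K$, essentially like toric promotion on $K$ alone, which by \Cref{THM:CompleteGraphs} has uniform orbit length independent of the labeling. The key structural observation is that the bridge sum adds exactly one edge $\{v_i, v_j\}$ connecting the two pieces, so the only interaction between $K$ and $S$ that toric promotion can detect is through whether the two consecutive labels $i, i+1$ happen to straddle this bridge. Inside $K$, every pair of vertices is adjacent, so whenever both $\sigma^{-1}(i)$ and $\sigma^{-1}(i+1)$ lie in $K$, the map simply increments $i$ without touching $\sigma$; the labels internal to $K$ are therefore highly inert.

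First I would set up precise notation for a state $(\sigma, i)$ and partition the vertex set $V' = V_1 \cup V_2$ (with $K = (V_2, E_2)$, say) so that I can speak of ``the labels currently assigned to $K$'' as a subset of $\mathbb{Z}/\nu\mathbb{Z}$. Next I would analyze a single application of $\text{TPro}$ by cases according to where $\sigma^{-1}(i)$ and $\sigma^{-1}(i+1)$ sit: both in $K$, both in $S$, or split across the two parts. In the both-in-$K$ case the labeling is unchanged; in the both-in-$S$ case the action is exactly the toric promotion move on $S$; the genuinely new behavior happens only in the split case, and there it is controlled entirely by the single bridge edge $\{v_i, v_j\}$ and hence by which of $v_i \in S$, $v_j \in K$ carries the relevant label. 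The goal of this case analysis is to isolate the claim that the entire trajectory of the state is determined by the data $(m \text{ or } n,\ S,\ \phi)$, where $\phi$ is the restriction of $\sigma$ to $S$, together with purely combinatorial bookkeeping that does not see the internal labels of $K$ beyond their set.

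The heart of the argument, and what I expect to be the main obstacle, is proving that the internal arrangement of the labels within $K$ is irrelevant to the orbit length. The natural approach is to exhibit a map that collapses any two labelings $\sigma, \sigma'$ of $G$ agreeing on $S$ (i.e. with the same $\phi$) and carrying the same \emph{set} of labels into $K$, and to show this map commutes with $\text{TPro}$ in the appropriate sense, so that it sends orbits to orbits of equal length. Concretely I would argue that the only way $\text{TPro}$ can permute labels within $K$ is through a split-case move across the bridge, and that such moves depend only on which label currently occupies $v_j$ and on the value $\phi$ at $v_i$, never on how the remaining labels are distributed among the other vertices of $K$. This reduces the dependence to the set of labels in $K$ and to $\phi$, which is exactly the assertion. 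The delicate point will be tracking how a label enters or leaves $K$ across the bridge and confirming that two such trajectories, started from states differing only by an internal permutation of $K$, return to their respective starting states after exactly the same number of steps.

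I would organize the writeup so that \Cref{THM:SimpleBridgeComplete} follows from a lemma asserting this orbit-length invariance under internal relabeling of $K$, with the uniform count supplied by \Cref{THM:CompleteGraphs}; the argument for \Cref{THM:SimpleBridgeTree} runs in parallel, substituting \Cref{THM: Tree} for the complete-graph count and replacing the ``all pairs adjacent'' property of $K$ with the corresponding analysis of internal tree moves. Because the tree case lacks the total-adjacency simplification, I expect the split-case bookkeeping there to require more care, but the overall reduction strategy — decouple the internal structure of the attached piece from its single point of contact with $S$ — is the same.
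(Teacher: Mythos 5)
There is a genuine gap at the heart of your reduction. You propose to collapse labelings that agree on $S$ and to show this equivalence commutes with $\text{TPro}$, but it does not commute step-by-step. In the split case, whether the move is an increment or a swap is decided by whether the two consecutive labels sit exactly on the two bridge endpoints, and in particular by whether the label in $K$ sits on the bridge vertex $v_j$ --- a piece of data that is \emph{not} determined by $\phi$. Take two labelings with the same $\phi$, one with $\sigma(v_j)=i$ and the other with $\sigma'(v_j)\neq i$, and suppose the bridge vertex of $S$ carries $i+1$: the first state increments without swapping, while the second swaps and pushes the label $i$ onto a vertex of $S$, so the two states no longer agree on $S$ after one step. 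Your claim that a split move ``never depends on how the remaining labels are distributed among the other vertices of $K$'' is exactly false at this point, since which label occupies $v_j$ is part of that distribution. Relatedly, the intuition that the labels of $K$ are ``inert'' is misleading: consecutive labels both inside $K$ indeed cause no swap, but a label in $K$ and a label in $S$ swap whenever their vertices are non-adjacent, which is almost always, so the $K$-labels are in constant interaction with the $S$-labels.

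The paper resolves this by proving invariance only at the level of complete excursions of the coin into $K$, not individual steps. Its Lemma~\ref{LEM:CompleteBridgeSimple} (built on the stone-and-coin formalism and a monotonicity statement, Lemma~\ref{LEM:CompleteCWvsCCW}, saying replicas of $K$-vertices only move clockwise while the coin is in $\mathcal{K}$) shows via a lap-by-lap count that every excursion into $K$ lasts exactly $n(N-1)$ steps and that its net effect on the stone diagram is a cyclic rotation composed with the transposition of the two bridge labels --- independent of how the labels are arranged inside $K$. Note also that the relevant count is $n(N-1)$, not the orbit length $n$ of $K$ alone, so Proposition~\ref{THM:CompleteGraphs} cannot supply the bookkeeping you hope for; and contrary to your closing remark, in the paper the tree case is the easier one (it follows by induction from a lemma of Adams--Defant--Striker), while the complete-graph case is where the new lap-based analysis is required. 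To repair your argument you would need to replace the step-level commuting map with an excursion-level statement of this kind.
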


Theorems~\ref{THM:SimpleBridgeTree} and \ref{THM:SimpleBridgeComplete} apply to an extensive collection of graphs and show two cases where toric promotion behaves naturally under the bridge sum operation. Since the orbit length of toric promotion on trees does not depend on the initial labeling, and the orbit length of toric promotion on complete graphs does not depend on the initial labeling, it is natural that the orbit length of toric promotion on any bridge sum of a simple graph with either a tree or a complete graph does not depend on the restriction of the initial labeling to the tree subgraph or the complete subgraph. 

In the next set of results, we give the specific orbit length of toric promotion on graphs constructed by certain bridge sums. First, we describe the orbit length of toric promotion on graphs constructed by the bridge sum of two complete graphs. Then, we describe the orbit length of toric promotion on graphs constructed by a bridge sum of a complete graph and a tree. Notice that the bridge sum of two trees is also a tree. Therefore, this case is shown by Defant in Theorem~\ref{THM: Tree} \cite{D2023}. We conclude that the orbit length of toric promotion on all graphs constructed by a single bridge sum of any combination of trees and complete graphs is given by $N(N-1)$, where $N$ is the number of vertices in the bridge sum graph. In particular, we show that the orbit length of toric promotion on graphs of this form is independent of the initial labeling.

\begin{theorem}
\label{THM:completebridgecomplete}
Let $K_1$ and $K_2$ be two complete graphs on $n_1$ and $n_2$ vertices, respectively. Let $G$ be a bridge sum of $K_1$ and $K_2$. Then all orbits of toric promotion on $G$ have length 
$(n_1+n_2)(n_2+n_1-1)$.
\end{theorem}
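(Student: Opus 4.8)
The plan is to reduce the statement to a single dynamical computation by peeling off the two coordinates of a state and exploiting the labeling-independence already established. Write $N = n_1 + n_2$ for the number of vertices of $G$, let $A$ and $B$ denote the vertex sets of $K_1$ and $K_2$, and let $a^{\ast}\in A$, $b^{\ast}\in B$ be the two endpoints of the bridge. Because $\mathrm{TPro}$ advances the index coordinate by $1$ at every step, the orbit of $(\sigma_0,i_0)$ has length $N\cdot k$, where $k$ is the least positive integer for which $(\mathrm{TPro}^N)^k(\sigma_0,i_0)=(\sigma_0,i_0)$; since $\mathrm{TPro}^N$ fixes the index coordinate, $k$ is exactly the length of the orbit of $\sigma_0$ under the relabeling map $\rho:=\mathrm{TPro}^N$ on $\Lambda_G$. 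Thus it suffices to prove that every $\rho$-orbit has length exactly $N-1$.

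First I would cut down the labelings that must be treated. Applying \Cref{THM:SimpleBridgeComplete} with the complete graph taken to be $K_1$ shows that the orbit length depends only on the restriction of $\sigma$ to $K_2$; applying it with the complete graph taken to be $K_2$ shows it depends only on the restriction to $K_1$. For a fixed partition of $\{1,\dots,N\}$ into the labels lying on $A$ and the labels lying on $B$, the set of labelings is the product of the arrangements on $A$ with the arrangements on $B$, and the two applications give constancy in each factor separately; hence the orbit length is constant across all labelings inducing a fixed partition. It then remains to (i) show this common value is the same for every partition, and (ii) compute it for one convenient representative.

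The core of the argument is a direct analysis of $\rho$ on a reduced state. Since any two labels lying on the same side sit on adjacent vertices, they never swap; the only swaps are between a label on $A$ and a label on $B$, and among these the unique non-swap is the \emph{bridge exception}, occurring precisely when the two consecutive labels $i,i+1$ occupy $a^{\ast}$ and $b^{\ast}$. I would therefore encode a state by the binary string $s\in\{A,B\}^N$ recording the side of each label value, together with the two labels occupying $a^{\ast}$ and $b^{\ast}$, and track how one full sweep $i=1,\dots,N$ acts on this data as a cyclic sequence of adjacent transpositions of $s$ with exactly one comparison blocked. The goal is to show that iterating this reduced map returns to the start after exactly $N-1$ sweeps, and to transport the value $N-1$ between partition classes by observing that the cross-bridge swaps performed along an orbit connect the partition at one end of a sweep to its neighbours in the Johnson graph on $n_1$-subsets of $\{1,\dots,N\}$, which is connected.

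The main obstacle is the bookkeeping in this last step: keeping track of how the two \emph{defects} created at the bridge propagate through the two complete sides, keeping track of when the bridge exception blocks a transposition, and then proving that the period is exactly $N-1$---neither more nor less---uniformly over all partition classes. The value $N-1$ should be read as the natural interpolation between the two extremes already available: a lone complete graph contributes period $1$ to $\rho$ (\Cref{THM:CompleteGraphs}) while a tree contributes period $m-1$ (\Cref{THM: Tree}), and the single blocked comparison at the bridge is precisely the feature that lengthens the complete-graph dynamics to the full $N-1$. As a consistency check, when $n_1=n_2=2$ the graph $G$ is the path on four vertices, a tree, and the formula returns $4\cdot 3=12$, in agreement with \Cref{THM: Tree}.
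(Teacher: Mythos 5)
Your reduction is sound as far as it goes: the orbit length is $N$ times the period of the relabeling map $\rho=\mathrm{TPro}^N$, and the double application of \Cref{THM:SimpleBridgeComplete} (once with $S=K_2$, once with $S=K_1$) correctly shows that this period is constant on each class of labelings inducing a fixed partition of $\{1,\dots,N\}$ into $K_1$-labels and $K_2$-labels. The consistency check at $n_1=n_2=2$ is also correct. But the proof stops exactly where the theorem begins: the two remaining steps---that the period equals $N-1$ for at least one partition class, and that the value transports to all other classes---are only described as goals, and you yourself flag them as ``the main obstacle.'' The heuristic that $N-1$ ``interpolates'' between the complete-graph period $1$ and the tree period $m-1$ is not an argument. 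The paper's proof of \Cref{THM:completebridgecomplete} is genuinely short only because all of the quantitative work is done in \Cref{LEM:CompleteBridgeSimple}: when the coin crosses the bridge into a complete side with $n$ vertices it stays there for exactly $n(N-1)$ steps, and upon exit the stone diagram is a cyclic rotation of the original with the two bridge replicas transposed. Concatenating the two sojourns gives $n_1(N-1)+n_2(N-1)=N(N-1)$ steps and a total rotation by $N$ positions, i.e.\ the identity. Your proposal contains no substitute for this lemma; the ``reduced state'' bookkeeping (which, incidentally, is a closed dynamical system, so the encoding itself is fine) is precisely the computation you would still have to carry out.

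The transport step also has a gap beyond bookkeeping. Knowing that the Johnson graph on $n_1$-subsets is connected does not by itself propagate the common value between partition classes: what you need is that the equivalence relation generated by ``same partition class'' together with ``visited by a common orbit'' is trivial, and a single orbit only realizes the specific cross-bridge swaps that its dynamics happen to perform. Showing that these realized edges connect all partition classes requires essentially the same fine-grained control of the dynamics that the exact-period computation does, so it cannot be waved through by citing connectivity of the Johnson graph. (The paper avoids this issue entirely: \Cref{LEM:CompleteBridgeSimple} is proved for an arbitrary initial stone diagram, so no transport between classes is ever needed.)
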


\begin{theorem}
 Let $G$ be a bridge sum (at any two vertices) of a tree on $m$ vertices and the complete graph on $n$ vertices. Then all orbits of toric promotion on $G$ have length $(m+n)(m+n-1)$.
\label{THM:TreeBridgeComplete}
\end{theorem}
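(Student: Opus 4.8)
The plan is to reduce the statement to the order of a single permutation, the toric promotion \emph{sweep}, and then to pin down that order by induction, drawing the base case from \Cref{THM: Tree}. Write $N = m+n$; let $T$ be the tree part with bridge endpoint $t_0$, and let $K$ be the clique with bridge endpoint $k_0$. Since the $i$-coordinate of a state increments by $1$ modulo $N$ at every application of $\mathrm{TPro}$, every orbit has length $N\cdot r$, where $r$ is the size of the orbit of the labeling under the map $\partial\colon \Lambda_G \to \Lambda_G$ obtained by composing the $N$ consecutive toric swaps of one full period. Thus it suffices to prove that every $\partial$-orbit has size exactly $N-1$. The guiding structural observation is that no toric swap is ever triggered by a pair of vertices inside $K$, because $K$ is complete (this is exactly the mechanism behind \Cref{THM:CompleteGraphs}); the only swaps occur within $T$ or across the almost entirely non-adjacent $T$--$K$ cut, so the clique is dynamically \emph{inert} and behaves like a reservoir of labels.

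Before analyzing $\partial$, I would trim the cases using the independence results already in hand. Applying \Cref{THM:SimpleBridgeTree} with $S=K$ shows the orbit length is independent of the tree structure and of the arrangement of labels within $T$, while \Cref{THM:SimpleBridgeComplete} with $S=T$ shows it is independent of the arrangement of labels within $K$. Hence the orbit length depends only on the partition of $\{1,\dots,N\}$ into the label set carried by $K$ and that carried by $T$, and I may freely take $T$ to be a path with $t_0$ as an endpoint. This leaves two tasks: to compute the common value for one convenient configuration, and to show the value is unchanged when a single label is exchanged between $T$ and $K$.

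For both tasks I would induct on $n$, enlarging the clique one vertex at a time. The base case is $n=1$: here $G$ is the bridge sum of an $m$-vertex tree with a single vertex, hence a tree on $m+1$ vertices, so \Cref{THM: Tree} gives orbit length $(m+1)m = N(N-1)$ for \emph{every} labeling; this already supplies full partition-independence at the base. (Alternatively, the case $m=2$ makes $T=K_2$, and \Cref{THM:completebridgecomplete} applies verbatim.) The inductive step is the crux, and the hard part: I must show that adjoining to $K_{n-1}$ a single new vertex adjacent to all of $K_{n-1}$ raises every $\partial$-orbit size from $N-2$ to $N-1$, i.e.\ by exactly one. This requires a dynamical lemma tracking how the period-$N$ composite of toric-swap involutions is altered by the new clique vertex and its incident non-edges, and verifying that the order of the induced permutation increments by precisely $1$. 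I expect this to mirror the leaf-addition mechanism underlying \Cref{THM: Tree}: the inertness of $K$ should let me package its labels as a single block that is cyclically advanced once per sweep, so that the extra clique vertex contributes exactly one additional unit to the period. Because the step would then apply uniformly to every orbit, it propagates the full labeling-independence of the base case, yielding orbit length $N(N-1)=(m+n)(m+n-1)$ for all labelings.
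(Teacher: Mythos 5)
Your reduction to the order of the sweep map $\partial$ is sound (the $i$-coordinate forces every orbit length to be a multiple of $N$, so it suffices to show each $\partial$-orbit has size $N-1$), the base case $n=1$ via \Cref{THM: Tree} is correct, and the observation that \Cref{THM:SimpleBridgeTree} and \Cref{THM:SimpleBridgeComplete} together force the orbit length to depend only on the partition of labels between $T$ and $K$ is a legitimate deduction that the paper does not make. But the proof has a genuine gap: the inductive step, which you yourself flag as ``the crux,'' is never carried out. You assert that adjoining one vertex to the clique raises every $\partial$-orbit size from $N-2$ to $N-1$, but this compares sweep maps on two different graphs with different vertex counts and different label alphabets ($\mathbb{Z}/(N-1)\mathbb{Z}$ versus $\mathbb{Z}/N\mathbb{Z}$), and there is no mechanism offered --- nor any general principle --- by which the order of such a permutation ``increments by exactly one'' under this modification. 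The ``reservoir'' intuition is also too coarse: although no swap is triggered by two labels inside $K$, labels migrate constantly across the $T$--$K$ cut, and controlling that migration is precisely the hard dynamical content. The separate task of showing invariance under exchanging a single label between $T$ and $K$ is likewise left unaddressed.

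For comparison, the paper closes exactly this gap with two excursion lemmas: \Cref{LEM:TreeBridgeSimple} shows that each visit of the coin to the tree lasts exactly $m(N-1)$ steps, and \Cref{LEM:CompleteBridgeSimple} (proved by the ``laps'' analysis, which is the quantitative version of your inertness heuristic) shows that each visit to the clique lasts exactly $n(N-1)$ steps; in both cases the net effect on the stone diagram is the transposition of the two bridge-endpoint replicas composed with a cyclic rotation. Concatenating one tree excursion with one clique excursion then returns the stone diagram to itself after $N(N-1)$ steps. To complete your argument you would need a result of comparable strength to \Cref{LEM:CompleteBridgeSimple}; without it, the proposal is a correct framing of the problem rather than a proof.
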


Finally, we consider the {\color{blue} corona product} of a complete graph with any tree. 

\begin{definition} 
Let $G_1=(V_1,E_1)$ and $G_2=(V_2,E_2)$ be two graphs, where $V_1=\{v_1,\ldots, v_\nu\}$ and $v'\in V_2$. Then the corona product $G_1\odot G_2(v')$ is the graph obtained by taking $\nu$ copies of $G_2$, say ${^1G_2},\ldots,{^{\nu}G_2}$, where the copy of vertex $v'$ in ${^iG_2}$ is denoted $v'_i$, then adding the edge $\{v_i,v'_i\}$ for all $i\in \{1,\ldots,n\}$. In other words, if $^iV_2$ denotes the vertex set of $^iG_2$ and $^iE_2$ denotes the edge set of  $^iG_2$, then $G_1\odot G_2(v')$ is the graph with vertex set $V=V_1\cup {^1V_2} \cup \cdots \cup {^{\nu}V_2}$ and edge set $E=E_1\cup {^1E_2} \cup \cdots \cup {^{\nu}E_2} \cup \{v_1,v'_1\} \cup \cdots \cup \{v_{\nu},v'_{\nu}\}$.
\end{definition}

The corona product can be interpreted as a multiple-bridge sum. Meaning, the corona product $G_1\odot G_2(v')$ is the graph obtained by taking the bridge sum $G_1(v_i)\Bridge G_2(v')$ for all $v_i\in V_1$. We show an example of a corona product in \Cref{Fig:CoronaProd}.

\begin{figure}[H]
\begin{center}
    \includegraphics[width=.9\linewidth]{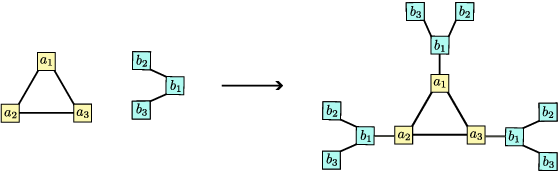}
\end{center}
\vspace{.3cm}
\caption{Let $G_1$ be the cycle graph on $3$ vertices and let $G_2$ be a tree on $3$ vertices. On the left, we show $G_1$ (yellow) and $G_2$ (blue). On the right, we show $G_1\odot G_2(b_1)$.}
\label{Fig:CoronaProd}
\end{figure}

We describe the orbit length of toric promotion on graphs constructed via the corona product, where $G_1$ is any complete graph and $G_2$ is any tree. We show that the orbit length of toric promotion on graphs of this form does not depend on the initial labeling.

\begin{theorem}
    Let $T=(V_T,E_T)$ be any tree on $m$ vertices, and let $K$ be the complete graph on $n$ vertices. All orbits of toric promotion on $ K\odot T(v')$, where $v'$ is any vertex in $V_T$, have size $(nm+n)(nm+n-1)$.
\label{THM:Corona}

\end{theorem}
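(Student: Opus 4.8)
The plan is to realize $K \odot T(v')$ as an iterated pendant-vertex construction and to induct one vertex at a time. Write $K$ for the complete graph on $n$ vertices, with vertices $v_1,\ldots,v_n$, and root each copy of $T$ at its vertex $v'$. Every tree grows from its root by repeatedly adjoining a leaf, so $K \odot T(v')$ can be built up from $K$ through a sequence of $nm$ steps, each adjoining a single new pendant vertex at a vertex already present: first the roots $v_1',\ldots,v_n'$ at $v_1,\ldots,v_n$, then the remaining tree vertices in a breadth-first order. For a graph $H$ and a chosen vertex of $H$, write $H^{+}$ for the result of adjoining one new pendant vertex there; each step above has the form $H \mapsto H^{+}$, and $H^{+}$ is exactly the bridge sum of $H$ with a one-vertex tree. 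Call a graph on $\nu$ vertices \emph{tree-like} if all of its toric promotion orbits have length $\nu(\nu-1)$; by Theorem~\ref{THM: Tree} every tree is tree-like. Since $K \odot T(v')$ has $N = n(m+1)$ vertices, showing it is tree-like yields the claimed orbit length $N(N-1) = (nm+n)(nm+n-1)$.

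The induction then reads as follows. The first step turns $K$ into $K^{+}$, the bridge sum of the complete graph on $n$ vertices with a one-vertex tree; Theorem~\ref{THM:TreeBridgeComplete} with $m=1$ gives that all orbits of $K^{+}$ have length $(n+1)n$, so $K^{+}$ is tree-like. The remaining $nm-1$ steps are each governed by the crux of the argument, the following pendant claim: \emph{if $G$ is tree-like on $\nu$ vertices, then $G^{+}$ is tree-like on $\nu+1$ vertices}. Granting it, every one of the $nm$ steps preserves tree-likeness, so $K \odot T(v')$ is tree-like and the theorem follows. I note that Theorem~\ref{THM:SimpleBridgeTree}, applied with $m=1$, already tells us that the orbit length of $G^{+}$ is a function of $G$ and of the restriction of the labeling to $G$ alone; what it does not supply is the value, which is precisely what the pendant claim must provide.

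Pinning down that value is the main obstacle. For a graph on $\nu$ vertices it is convenient to write its orbit lengths as $\nu \cdot k$, where $k$ is the number of complete passes the active label makes through all the labels before the labeling returns to its start; tree-likeness is exactly the statement $k = \nu-1$, and the pendant claim asserts that passing from $G$ to $G^{+}$ raises the vertex count from $\nu$ to $\nu+1$ and raises $k$ from $\nu-1$ to $\nu$. To prove this I would follow, over one full pass of the active label, how the label on the new pendant $\ell$ evolves: because $\ell$ meets the rest of $G^{+}$ only through the single edge $\{w,\ell\}$ (with $w$ its unique neighbour), the label on $\ell$ is governed entirely by this bridge---whenever it is part of the active pair $\{i,i+1\}$ it stays in place exactly when the other member lies on $w$, and is swapped away otherwise---so $\ell$ plays the role of a leaf in Defant's analysis of toric promotion on trees (Theorem~\ref{THM: Tree}) and is expected to add exactly one pass to the return time. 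The delicate point---where I expect the real work to lie, and where the label-tracking machinery behind Theorems~\ref{THM:SimpleBridgeTree} and \ref{THM:TreeBridgeComplete} must be brought in---is to show that the map induced on labelings by one full pass has order exactly $\nu$ on every orbit of $G^{+}$, uniformly in the starting labeling, so that tree-likeness transfers to $G^{+}$ with no residual dependence on the initial labeling. It suffices, moreover, to verify the pendant claim only for the graphs that actually occur along the construction---a complete graph carrying a pendant forest---which should keep the bridge-crossing bookkeeping tractable.
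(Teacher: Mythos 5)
Your reduction to the ``pendant claim'' leaves the entire difficulty of the theorem unproved, and the claim as you have framed it is not something the available results can deliver. The hypothesis you carry through the induction --- that all orbits of $G$ have length $\nu(\nu-1)$ --- is not the right invariant, because the dynamics of toric promotion on $G^{+}$ never restrict to ordinary toric promotion on $G$: there are $\nu+1$ labels circulating, and while the coin sits on $G$ one of those labels is parked on the pendant, so the induced process on $G$ is a $(\nu+1)$-label variant whose orbit structure is not determined by the orbit length of the $\nu$-label process. This is exactly why \Cref{THM:SimpleBridgeTree} (with $m=1$) only tells you that the orbit length of $G^{+}$ is \emph{some} function of $G$ and the restricted labeling; extracting the value $\nu(\nu+1)$ is where all the work lies, and you explicitly defer it. Note also that the intermediate graphs your construction passes through --- a complete graph carrying a partial pendant forest --- are precisely instances of the iterated bridge sums that the paper leaves open as \Cref{CONJ:InteriveTreeandCompelete}; if the pendant claim were routine, that conjecture (in the tree-plus-complete case) would follow immediately, which is a strong signal that the inductive step is not a bookkeeping exercise.

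For comparison, the paper does not induct at all: it fixes a time $t$ at which the coin crosses one bridge $\mathcal{B}^{(1)}$ into a tree copy, applies \Cref{LEM:TreeBridgeSimple} to each whole copy $T^{(i)}$ at once (each excursion costs exactly $m(N-1)$ steps and acts on the stone diagram as a transposition of the two bridge replicas up to cyclic rotation), uses \Cref{LEM:CompleteCWvsCCW} to bound the clockwise and counterclockwise drift of $\mathbf{v}_{k_1}$ and $\mathbf{v}_{p_1^{(1)}}$, and then runs the lap-based count from \Cref{LEM:CompleteBridgeSimple} around $\mathcal{K}$ to pin down the return time $t'=t+N(N-1)$, finishing by verifying that every replica has net displacement $0$ so that $\SD_{t}=\SD_{t+N(N-1)}$. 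If you want to salvage your approach, you would need to prove the pendant claim for complete graphs with pendant forests by essentially this same replica-tracking machinery, at which point you are doing the paper's argument one vertex at a time rather than avoiding it.
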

Notice that this is again $N(N-1)$, where here $N$ denotes the number of vertices in $K\odot T(v')$.
\subsection{Outline}
In \Cref{Bridge sums with Simple Graphs}, we first prove \Cref{THM:CompleteGraphs}. Then, we define two objects, introduced by Adams, Defant, and Striker in \cite{ADS2024}, called {\color{blue}stone diagrams} and {\color{blue}coin diagrams}, which give an alternate but equivalent definition of toric promotion and provide a structure to formally study the orbits of toric promotion. We then prove Theorems~\ref{THM:SimpleBridgeTree} and \ref{THM:SimpleBridgeComplete}. In \Cref{orbitlengths}, we prove \Cref{THM:completebridgecomplete,THM:TreeBridgeComplete,THM:Corona}. Finally, in \Cref{Sec:FutureWorks}, we provide two directions for future work, with conjectures in each direction.

\section{Bridge sums of trees and complete graphs with simple graphs}
\label{Bridge sums with Simple Graphs}

Consider $K$, the complete graph on $n$ vertices. We will show that all orbits of toric promotion on $K$ have length $n$.

\begin{proof}[Proof of \Cref{THM:CompleteGraphs}]
    Let $(\sigma_0,i_0)$ be an initial state of toric promotion. To perform toric promotion, we look at the vertices labeled by $i$ and $i+1$. Since $K$ is a complete graph, for all iterations of toric promotion, the labels $i$ and $i+1$ will be on adjacent vertices. So, $\sigma$ will remain constant throughout the orbit. Therefore, the state of toric promotion after $I$ iterations is given by $(\sigma_0, i_0+I)$. After $I=n$ iterations, the state will be $(\sigma_0, i_0+n)$, or equivalently $(\sigma_0, i_0)$. 
\end{proof}

\subsection{Coin and Stone Diagrams}

Adams, Defant, and Striker \cite{ADS2024} introduce an alternative definition of toric promotion using stone and coin diagrams. In this subsection, we will define these objects. 

Consider a graph $G=(V,E)$, where $|V|=\nu$, and an initial state of toric promotion $(\sigma_0,i_0)$. To construct a coin diagram, place a coin on the vertex $\sigma_0^{-1}(i_0)$ of $G$.

To build the associated stone diagram, we first draw the corresponding cycle graph, denoted $\mathsf{Cycle}_\nu$, which is the cycle graph with vertex set $\mathbb{Z}/ \nu \mathbb{Z}$, whose vertices are arranged in clockwise cyclic order. The formal symbol $\mathbf{v}_i$ is called the {\color{blue}replica} of vertex $ v_i$, and is used to encode the labeling $\sigma$. That is, we place the replica $\mathbf{v}_i$ on the vertex $\sigma(v_i)$ of $\mathsf{Cycle}_\nu$. Additionally, we place a stone on vertex $i$ of $\mathsf{Cycle}_\nu$. We denote the stone diagram of a state $(\sigma, i) \in \Omega_G$ by $\SD(\sigma, i)$.

Furthermore, in \cite{ADS2024}, Adams Defant and Striker define the {\color{blue}cyclic shift} operator on the set of states $\text{cyc}:\Omega_G \rightarrow \Omega_G$ by $$\text{cyc}(\sigma, i)= (\text{cyc}(\sigma),i+1),$$ where $\text{cyc}(\sigma)$ is the map that sends $\sigma(v)$ to $\sigma(v)+1$ for all vertices $v\in V$. They define cyc on a stone diagram by $$\text{cyc}(\SD(\sigma, i))=\SD(\text{cyc}(\sigma),i+1).$$ A stone diagram $\SD'$ is called a {\color{blue}cyclic rotation} of a stone diagram $\SD$, if $\SD'=\text{cyc}^k(\SD)$ for some integer $k$. In other words, $\SD'$ is a cyclic rotation of $\SD$, if $\SD'$ is obtained by fixing the graph $\textsf{Cycle}_\nu$ in the plane and rotating all replicas (and the stone) by $k$ positions clockwise, for some integer $k$. See \Cref{EX:cyc} for a cyclic rotation. Additionally, we write $(\sigma(v_j),\sigma(v_k))\circ \sigma$ to denote the transposition that swaps the labels on the vertices $v_j$ and $v_k$ composed with $\sigma$. Notice that this is different from  $(j,k)\circ \sigma$, which denotes the transposition that swaps the labels $j$ and $k$ composed with $\sigma$.

\begin{figure}[htbp]
    \centering
\includegraphics[width=.6\linewidth]{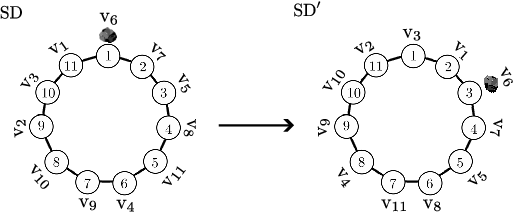}
\vspace{.3cm}
\caption{The stone diagram $\SD'$ (on the right) is a cyclic rotation of the stone diagram $\SD$ (on the left) by two positions clockwise, or $k=2$. In other words, $\SD'=\text{cyc}^2(\SD)$.}
\label{EX:cyc}
\end{figure}

We now interpret toric promotion as a set of rules on the stone and coin diagrams. That is, we perform toric promotion by looking at the replica that sits on the stone $\mathbf{v}_j$, and the replica that sits one position clockwise of the stone $\mathbf{v}_k$; or equivalently, the replica of the vertex of $G$ labeled by $i$ and the replica of the vertex of $G$ labeled by $i+1$. If the vertices $v_j$ and $v_k$
\begin{itemize}
    \item are adjacent in $G$, we slide the stone from under the replica $\mathbf{v}_j$ to under the replica $\mathbf{v}_k$, and we move the coin to the vertex $v_k$;
    \item otherwise, we swap the positions of the replicas $\mathbf{v}_j$ and $\mathbf{v}_k$, leaving the stone under the replica $\mathbf{v}_j$ (you can imagine the replica $\mathbf{v}_j$ surfing on the stone as it slides one position clockwise), and we leave the coin on vertex $v_j$.
\end{itemize}

If the coin is on a vertex $v_j$ at time $t$ and the coin is on a different vertex $v_k$ at time $ t+1$, then we say the coin moved from $v_j$ to $v_k$ at time $t$ and that the vertex $v_k$ received the coin at time $t+1$.

We can now track the orbits of toric promotion using stone and coin diagrams. In the remainder of this article, we will refer to this definition of toric promotion, as it will ease exposition in the remaining proofs. We give an example of an orbit of toric promotion using stone and coin diagrams in \Cref{EX:toric promotionStoneandCoin}.

\begin{figure}[h]
\centering
\includegraphics[width=\linewidth]{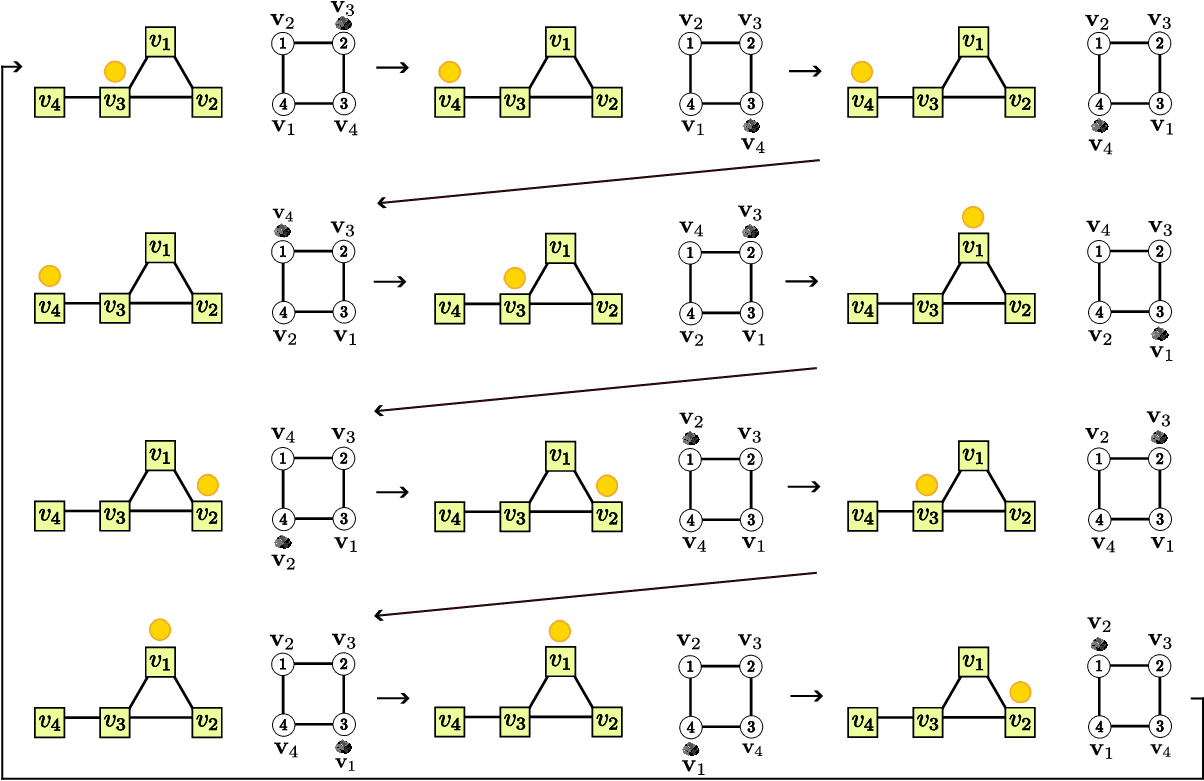}
\caption{Let $G$ be the graph on $4$ vertices shown in yellow. We show the orbit of toric promotion on $G$ containing the initial state $(4123,2)$ in terms of stone and coin diagrams. Note that this is the same orbit as in \Cref{EX: toric promotion orbit}.}
\label{EX:toric promotionStoneandCoin}
\end{figure}

\subsection{Trees Bridged with Simple Graphs}
Suppose we have a tree $T=(V_T,E_T)$ with vertex set $V_T=\{a_1,\ldots, a_m\}$ and a connected simple graph $S=(V_S,E_S)$ with vertex set $V_S=\{b_1,\ldots,b_\nu\}$. Let $G=T(a_i)\Bridge S(b_j)$ be a bridge sum of $T$ and $S$ at any two vertices $a_i$ and $b_j$. Let us denote the vertex set of $G$ with $V'$ and $|V'|=m+\nu$ with $N$.

The goal of this subsection is to prove \Cref{THM:SimpleBridgeTree}. That is, we want to show that the length of the orbit of toric promotion on $G$ containing the state $(\sigma,i)$ only depends on $m$, $S$, and $\phi$, the restriction of $\sigma$ to $V_S$. In particular, the orbit length of toric promotion on $G$ does not depend on the specific choice of $T$, nor the restriction of $\sigma$ to $V_T$.

Isomorphic graphs have orbits of equal size. So, without loss of generality, we can choose a naming of the vertices in $V'$. Let the vertex $a_i$ be named $p_m$ and let all other vertices in $V_T\subset V'$ be named bijectively with $\{p_1,\ldots,p_{m-1}\}$. Let $b_j$ be named $s_\nu$ and all other vertices in $V_S \subset V'$ be named bijectively with  $\{s_1,\dots,s_{\nu-1}\}$.

We will refer to the induced subgraph on $\{p_1,\ldots,p_{m}\}$ as $\mathcal{T}$, and the induced subgraph on $\{s_1,\ldots,s_\nu\}$ as $\mathcal{S}$. Then the edge $\{s_\nu,p_{m}\}$ is a bridge connecting $\mathcal{T}$ and $\mathcal{S}$, which we will call $\mathcal{B}$. 

This proof relies on showing that a lemma from \cite{ADS2024} (see \Cref{ADStreelemmma}) extends to the special case where the edge in question is $\mathcal{B}$. Before we state their lemma, let us define $\eta_{l,l'}$, a parameter Adams, Defant, and Striker use in the statement of \Cref{ADStreelemmma}. 

Given two adjacent vertices $v_l$ and $v_{l'}$ of a simple graph, let $S^{l,l'}$ be the set of vertices strictly closer to $v_{l'}$ than $v_l$ (including $v_{l'}$), where the distance from a vertex $v_x$ to a vertex $v_y$ is defined by the number of edges in the shortest path from $v_x$ to $v_y$. Furthermore, let $\eta_{l,l'}=|S^{l,l'}|$.

In \cite{ADS2024}, the authors give the following lemma for an $m$-vertex tree, $T=(V,E)$.

\begin{lemma} [\cite{ADS2024}]
Let $\{v_{l}, v_{l'}\}$ be an edge of $T$, and let $t$ be a time at which the coin moves from $v_l$ to $v_{l'}$. The first time after $t$ at which the coin moves from $v_{l'}$ to $v_l$ is $t+\eta_{l,l'}(m-1).$ Moreover, for all $v_j\in S^{(l,l')}$ and $v_k\in V$ with $j\not=k,$ there is a unique time in the interval $[t+1,t+\eta_{l,l'}(m-1)]$ at which $\vv_j$ sits on the stone and $\vv_k$ sits one position clockwise of $\vv_j$. In addition, $\SD_{t+\eta_{l,l'}(m-1)+1}$ is a cyclic rotation of  $\SD((\sigma(v_{l}),\sigma(v_{l'}))\circ\sigma_t,i_t+1)$.
\label{ADStreelemmma}
\end{lemma}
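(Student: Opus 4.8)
The plan is to prove the lemma by strong induction on $\eta_{l,l'}=|S^{l,l'}|$, working entirely in the stone-and-coin picture, with $m$ (and the tree $T$) held fixed throughout. The organizing principle is that the conclusion is \emph{self-similar}: the final sentence asserts that the coin's entire sojourn inside $S^{l,l'}$ — from crossing $v_l\to v_{l'}$ at time $t$ until it first crosses back $v_{l'}\to v_l$ — has, up to a global cyclic rotation, precisely the net effect of a single non-adjacent toric promotion step at time $t$, namely swapping the replicas $\vv_l$ and $\vv_{l'}$ and advancing the stone by one. Since this is exactly the shape of both the induction hypothesis (applied to smaller edges) and the desired conclusion, excursions into child subtrees will be \emph{transparent}: in the cyclic order of the replicas each one behaves just like surfing $\vv_{l'}$ past a single non-neighbor, leaving every other replica in its relative position. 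I would first record that, because $T$ is a tree, the only edge leaving $S^{l,l'}$ is $\{v_{l'},v_l\}$, so the coin can exit $S^{l,l'}$ only via the terminal crossing; hence the stone-replica belongs to $S^{l,l'}$ at every step of the interval $[t+1,t+\eta_{l,l'}(m-1)]$.

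For the base case $\eta_{l,l'}=1$, the vertex $v_{l'}$ is a leaf with unique neighbor $v_l$. At time $t+1$ the replica $\vv_{l'}$ sits on the stone with $\vv_l$ immediately counterclockwise; since every other replica is a non-neighbor, each step surfs $\vv_{l'}$ one position clockwise past the next replica, so $\vv_{l'}$ meets each of the $m-2$ non-neighbors once and then, after $m-1$ steps, finds $\vv_l$ clockwise-adjacent and crosses back at time $t+(m-1)$. A direct position count then verifies the asserted rotation in this case.

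In the inductive step, let $c_1,\dots,c_d$ be the neighbors of $v_{l'}$ inside $S^{l,l'}$, so $\eta_{l,l'}=1+\sum_r\eta_{l',c_r}$. Again $\vv_{l'}$ starts on the stone with $\vv_l$ directly behind it and surfs clockwise through the other replicas, whose relative cyclic order stays fixed throughout. When the next replica is a non-neighbor, $\vv_{l'}$ surfs past it in one step; when it is a child $c_r$, the coin hands off $v_{l'}\to c_r$, and the induction hypothesis applied to $\{v_{l'},c_r\}$ returns the coin via $c_r\to v_{l'}$ after exactly $\eta_{l',c_r}(m-1)$ steps, with the stone diagram restored up to rotation to the configuration in which $\vv_{l'}$ and $\vv_{c_r}$ are swapped and the stone advanced — i.e.\ the transparent behavior above. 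Thus $\vv_{l'}$ marches past all $m-1$ other replicas exactly once, meeting $\vv_l$ last and crossing back then. The return time and the uniqueness statement now follow from one counting identity: every step of the interval is a \emph{seeing event} in which the stone-replica (a vertex of $S^{l,l'}$) is clockwise-adjacent to one other replica; the root $\vv_{l'}$ supplies $m-1$ of these (it sees each other replica once), and by the induction hypothesis the vertices of each $S^{l',c_r}$ supply $\eta_{l',c_r}(m-1)$ of them, each seeing every replica of $V$ once. Summing gives $(m-1)(1+\sum_r\eta_{l',c_r})=\eta_{l,l'}(m-1)$ seeing events, hence that many steps, which is the asserted return time. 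Since there are exactly $\eta_{l,l'}(m-1)$ pairs $(v_j,v_k)$ with $v_j\in S^{l,l'}$ and $k\ne j$, and the argument shows each is realized at least once, each is realized exactly once, giving uniqueness.

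It remains to assemble the final stone-diagram identity by composing the exact swaps contributed by non-neighbor surfs, the rotation-adjusted swaps contributed by the child excursions, and the terminal handoff to $\vv_l$; tracking positions shows the result is the claimed cyclic rotation of $\SD((\sigma(v_l),\sigma(v_{l'}))\circ\sigma_t,\,i_t+1)$. I expect the main obstacle to be precisely this rotation bookkeeping: each excursion returns the diagram only up to an a priori unknown cyclic shift, so I must track these shifts — using that the cyclic-shift operator $\mathrm{cyc}$ commutes with toric promotion — both to confirm that $\vv_{l'}$ resumes surfing from the correct position after each excursion and to check that the accumulated rotations collapse to the single shift named in the conclusion.
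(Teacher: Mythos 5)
Your proposal is correct and takes essentially the same approach as the paper: this lemma is quoted from \cite{ADS2024}, but the paper's proof of its adaptation (\Cref{LEM:TreeBridgeSimple}) uses exactly your strategy --- induction on $\eta_{l,l'}$, a leaf base case, and an inductive step in which the stone carries $\vv_{l'}$ clockwise through blocks of non-neighbor replicas interleaved with child-subtree excursions that the inductive hypothesis renders ``transparent'' (up to a cyclic rotation, a swap of $\vv_{l'}$ with the child's replica plus one stone advance). Your ``seeing events'' count and the pigeonhole argument for uniqueness are a mild repackaging of the paper's explicit telescoping sum over the times $t_r$.
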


Since $G$ is not necessarily a tree, \Cref{ADStreelemmma} does not always apply to the edge $\mathcal{B}$. In the following lemma (see \Cref{LEM:TreeBridgeSimple}), we show that \Cref{ADStreelemmma} does in fact hold in the special case when the edge in question is $\mathcal{B}$. In general terms, we will show that each time the coin crosses $\mathcal{B}$ from $s_\nu$ to $p_m$, the coin will travel in a depth-first search, going along all branches of $\mathcal{T}$ before returning to $\mathcal{S}$. Specifically, the coin will spend exactly $m(N-1)$ time steps on $\mathcal{T}$ before returning to $\mathcal{S}$. We will show that during this breadth-first search, the coin spends exactly $N-1$ time steps on each vertex in $\mathcal{T}$. Furthermore, we will show that each time the coin enters $\mathcal{T}$, the stone diagram at the time the coin moves from $s_{\nu}$ to $p_m$ is a cyclic rotation of the stone diagram immediately after the next time the coin moves from $p_m$ to $s_{\nu}$, with the positions of the replicas $\mathbf{v}_{s_\nu}$ and $\mathbf{v}_{p_m}$ swapped. 

Notice that since $\mathcal{B}$ is a bridge, we have that $S^{s_\nu,p_{m}}=V_T$. It follows that, $\eta_{s_\nu,p_{m}}=m$. We now give the precise statement of the adapted lemma and its proof. 

\begin{lemma}
Consider $\mathcal{B}=\{s_\nu,p_m\}$, and let $t$ be a time at which the coin moves from $s_\nu$ to $p_m$. The first time after $t$ at which the coin moves from $p_m$ to $s_\nu$ is $t+m(N-1).$ Moreover, for all $v_j\in V_T$ and $v_k\in V'$ with $j\not=k,$ there is a unique time in the interval $[t+1,t+m(N-1)]$ at which $\vv_j$ sits on the stone and $\vv_k$ sits one position clockwise of $\vv_j$. In addition, $\SD_{t+m(N-1)+1}$ is a cyclic rotation of $\SD((\sigma_t(s_\nu),\sigma_t(p_{m}))\circ\sigma_t,i_t+1)$ by $\nu$ spaces clockwise.
\label{LEM:TreeBridgeSimple}
\end{lemma}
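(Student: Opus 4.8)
The plan is to reduce Lemma~\ref{LEM:TreeBridgeSimple} to the tree case by comparing the dynamics of the coin on $G$ with the dynamics it would have on an auxiliary tree, and then invoking Lemma~\ref{ADStreelemmma} on that tree. The central observation is that the lemma concerns only the time interval $[t+1, t+m(N-1)]$ during which the coin is confined to $\mathcal{T}$. During this window the coin never crosses back over the bridge $\mathcal{B}$, so the structure of $\mathcal{S}$ is irrelevant to the coin's movement: the coin behaves exactly as it would on a tree. First I would make this precise by building a tree $\widehat{T}$ on $N$ vertices that agrees with $\mathcal{T}$ on the vertices $\{p_1,\dots,p_m\}$ and attaches the $\nu-1$ vertices of $\mathcal{S}\setminus\{s_\nu\}$ as a path (or any tree) hanging off $s_\nu$. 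By construction $S^{p_m, s_\nu}$ in $\widehat{T}$ equals $V_{\widehat{T}}\setminus V_T$, which has size $N-m = \nu$, and symmetrically $\eta_{s_\nu, p_m} = m$, matching the value computed in the excerpt.

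\emph{The key point} is that while the coin sits on $\mathcal{T}$, every toric promotion step it encounters involves replicas $\vv_j, \vv_k$ whose adjacency in $G$ is determined entirely by edges of $\mathcal{T}$ and the bridge $\mathcal{B}$ — edges of $\mathcal{S}$ only ever matter once the stone is under a replica of an $\mathcal{S}$-vertex other than $s_\nu$, which (I claim) cannot happen before the coin leaves $\mathcal{T}$. I would verify this invariant carefully: whenever the coin is on a vertex of $\mathcal{T}$, the replica sitting on the stone is $\vv_{p}$ for some $p \in \{p_1,\dots,p_m\}$, so the relevant adjacency query is always ``is $p_a$ adjacent to $v_k$?'' and the answer agrees in $G$ and in $\widehat{T}$ (since $\mathcal{T}$ is an induced subtree in both and $\mathcal{B}$ is present in both). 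Consequently the coin's trajectory on $G$, the slide/swap decisions, and the stone positions all coincide step-for-step with those on $\widehat{T}$ throughout $[t+1, t+m(N-1)]$.

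\textbf{Transferring the conclusions.} Once the trajectories are identified, the three assertions of Lemma~\ref{LEM:TreeBridgeSimple} follow from Lemma~\ref{ADStreelemmma} applied to $\widehat{T}$ with $(v_l, v_{l'}) = (s_\nu, p_m)$ and $\eta_{s_\nu,p_m} = m$: the return time is $t + m(N-1)$ since $\widehat{T}$ has $N$ vertices; the uniqueness-of-incidence statement holds for $v_j \in S^{s_\nu,p_m} = V_T$ and $v_k \in V'$, $j \neq k$; and the final stone diagram is a cyclic rotation of $\SD((\sigma_t(s_\nu),\sigma_t(p_m))\circ\sigma_t, i_t+1)$. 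The only genuinely new piece of bookkeeping is pinning down the rotation amount as exactly $\nu$ clockwise, which I would extract by tracking the net displacement of the stone on $\mathsf{Cycle}_N$: over the interval the stone advances one position for each of the $m(N-1)$ steps, and comparing this total displacement to the reference diagram $\SD((\sigma_t(s_\nu),\sigma_t(p_m))\circ\sigma_t, i_t+1)$ yields the shift of $\nu = N - m$.

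\emph{The main obstacle} I anticipate is justifying rigorously that the coin cannot escape $\mathcal{T}$ prematurely and that the $\mathcal{S}$-side replicas are never queried — in other words, proving the confinement invariant rather than assuming it. The subtlety is that the replicas of $\mathcal{S}$-vertices do move around on $\mathsf{Cycle}_N$ during this interval (they get swapped whenever a non-adjacency involving them is encountered from the $\mathcal{T}$ side through the bridge), so I must confirm that their rearrangement never causes the stone to land under an $\mathcal{S}$-replica before the coin has completed its depth-first traversal of $\mathcal{T}$. I would handle this by an induction on the steps within $[t+1, t+m(N-1)]$, using Lemma~\ref{ADStreelemmma}'s own incidence statement on $\widehat{T}$ as the inductive engine: since on $\widehat{T}$ the stone sits under $\vv_j$ only for $v_j \in V_T$ during exactly this window, and the two trajectories coincide by the adjacency-agreement argument, the same holds on $G$, closing the loop.
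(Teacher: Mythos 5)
Your proposal is correct, but it takes a genuinely different route from the paper. The paper re-proves the statement from scratch by induction on $\eta_{s_\nu,p_m}=m$, mirroring the inductive argument behind \Cref{ADStreelemmma}: it orders the neighbors of $p_m$ by the cyclic positions of their replicas, tracks the stone sliding through the intermediate replica sets $F_1,\dots,F_d$, and applies the inductive hypothesis to each edge $\{p_m,v_{\alpha(r)}\}$ (each of which is again a bridge between a tree and a simple graph). You instead reduce to the tree case outright: you build an auxiliary tree $\widehat{T}$ agreeing with $\mathcal{T}\cup\mathcal{B}$ and couple the two dynamics step-for-step on $[t+1,t+m(N-1)]$, using the incidence clause of \Cref{ADStreelemmma} (whose count forces the stone to sit under a $V_T$-replica at every time in that window) as the engine that keeps the coupling alive; since every adjacency query during the window is made from a $V_T$-vertex, and such queries have the same answer in $G$ and $\widehat{T}$, the trajectories coincide and all three conclusions transfer. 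Your method for pinning down the rotation amount is also sound and arguably cleaner than the paper's: the stone advances exactly one position clockwise per step, so its position in $\SD_{t+m(N-1)+1}$ differs from that in $\SD((\sigma_t(s_\nu),\sigma_t(p_m))\circ\sigma_t,i_t+1)$ by $m(N-1)\equiv\nu\pmod N$, and since a cyclic rotation moves the stone by its rotation amount, the shift must be $\nu$. What each approach buys: yours is shorter and uses \Cref{ADStreelemmma} as a black box; the paper's direct induction additionally exhibits the internal structure of the excursion (the depth-first traversal and the $N-1$ steps spent at each vertex of $\mathcal{T}$), and its lap-style bookkeeping is the template reused for \Cref{LEM:CompleteBridgeSimple}, where no reduction to a previously known case is available. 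One cosmetic imprecision in your write-up: during the window the stone is never under \emph{any} $\mathcal{S}$-replica, including $\vv_{s_\nu}$, not merely never under a replica of an $\mathcal{S}$-vertex other than $s_\nu$; this is what your confinement argument actually delivers and is what you need.
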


The proof is by induction on $\eta_{s_\nu,p_m}=m$, and mirrors the proof structure of \Cref{ADStreelemmma} in \cite{ADS2024}. We give an example of \Cref{LEM:TreeBridgeSimple} in \Cref{Fig:TreeBridgeSimpleLemmaEx}.

\begin{figure}[htbp]
  \centering
   \includegraphics[width=.87\linewidth]{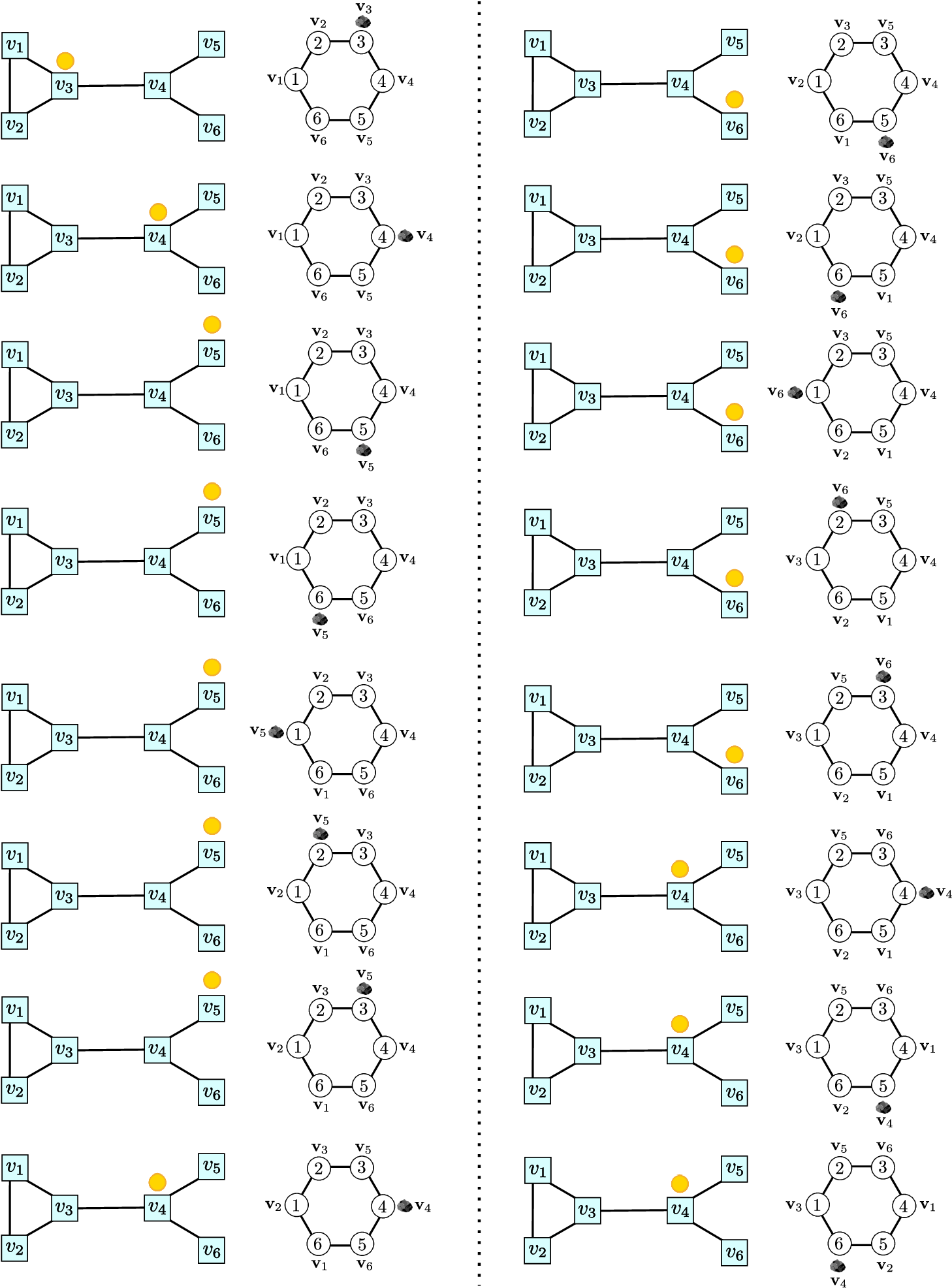}
 \caption{We show an example of \Cref{LEM:TreeBridgeSimple} on the bridge sum of the cycle graph on $3$ vertices and a tree on $3$ vertices, where $\mathcal{B}=\{v_3,v_4\}$. The top left image shows time $t$, when the coin crosses $v_3$ to $v_4$. The next image (below) shows time $t+1$. We have $F_1=\{ \emptyset \}$, $\textbf{F}_1= \{ \emptyset \}$, $f_1=0$, $F_2=\{ \emptyset \}$, $\textbf{F}_2= \{ \emptyset \}$, $f_2=0$, $F_3=\{ v_1,v_2 \}$, $\textbf{F}_3= \{ \textbf{v}_1,\textbf{v}_2 \}$, and $f_3=2$. If we were to perform toric promotion on the bottom right image, the coin would be on $v_3$ in the resulting coin diagram. So, the coin spends $15$ time steps in $\mathcal{T}$.} 
 \label{Fig:TreeBridgeSimpleLemmaEx}
\end{figure}

\begin{proof}
We give a proof by induction on $\eta_{s_\nu,p_m}=m$. First, assume $m=1$, then $V_T=\{p_{1}\}$, and $p_1$ has only one neighbor, $s_\nu$. At time $t+1$, the stone sits on $\mathbf{v}_{p_1}$ and $\mathbf{v}_{s_\nu}$ sits one position counterclockwise of $\mathbf{v}_{p_1}$. Since $\mathbf{v}_{p_1}$ sits on the stone, $\mathbf{v}_{p_1}$ can only move clockwise. Moreover, $s_\nu$ is the only neighbor of $p_1$. So, at time $t+1$, the replica $\mathbf{v}_{p_1}$ starts sliding through the replicas on the clockwise path from $\mathbf{v}_{p_1}$ to $\mathbf{v}_{s_{\nu}}$. There are $N-2$ replicas on this path. So, $\mathbf{v}_{p_1}$ will slide clockwise exactly $N-2$ positions on $\mathsf{Cycle}_N$. At time $t+1+(N-2)=t+(N-1)$, the replica $\mathbf{v}_{p_1}$ sits on the stone and the replica $\mathbf{v}_{s_\nu}$ sits one position clockwise of $\mathbf{v}_{p_1}$. It follows that at time $t+(N-1)$, the stone slides from under $\mathbf{v}_{p_1}$ to under $\mathbf{v}_{s_\nu}$. Therefore, the coin sits on vertex $p_1$ during the interval $[t+1,t+(N-1)]$, and at time $t+(N-1)$, the coin crosses $\mathcal{B}$ from $p_1$ to $s_\nu$.

Since the replica $\mathbf{v}_{p_1}$ sat on the stone and slid $N-2$ positions around $\mathsf{Cycle}_N$, it is clear that there is a unique time in the interval $[t+1,t+(N-1)]$ when $\mathbf{v}_{p_1}$ sat on the stone and each $\mathbf{v}_j\not= \textbf{v}_{p_{1}}$ sat one position clockwise of $\mathbf{v}_{p_1}$. Furthermore, in the interval $[t+1,t+(N-1)]$, each $\mathbf{v}_j$ for $j\not= \{p_{1},s_{\nu}\}$ was moved one position counterclockwise by the replica $\mathbf{v}_{p_{1}}$. Therefore, $\SD_{t+(N-1)}$ is a cyclic rotation of $\SD((\sigma_t(s_\nu),\sigma_t(p_{1}))\circ \sigma_t,i_t+1)$ by one position counterclockwise, or equivalently, $N-1=\nu$ position clockwise. This concludes the base case.

Now suppose $m\geq 2$. Let $d$ be the degree of $p_m$, and let $\mathfrak{N}[p_{m}]=\{v_{\alpha(1)},\ldots, v_{\alpha(d)}, p_m\}$ be the closed neighborhood of $p_{m}$, where $v_{\alpha(1)},\ldots,v_{\alpha(d)}$ are the neighbors of $p_m$ indexed so that their replica appear in clockwise cyclic order in $\SD_{t+1}$ and so that $\alpha(0)=\alpha(d)=s_\nu$. For $1\leq j \leq d$, let $F_j$ be the set of vertices in $V'\setminus \mathfrak{N}[p_{m}]$ whose replicas appear on the clockwise path from $\mathbf{v}_{\alpha(j-1)}$ to $\mathbf{v}_{\alpha(j)}$ in $\SD_{t+1}$. Let $\textbf{F}_j$ be the set of replicas of vertices in $F_j$ and let $f_j=\lvert F_j \rvert.$

Let $t_0=t+1$. At time $t_0$, the stone (carrying the replica $\mathbf{v}_{p_m}$) starts to slide clockwise through the replicas in $\textbf{F}_1.$ For each $v_k\in F_1$, there is a unique time in the interval $[t_0,t_0+f_1]$ at which $\mathbf{v}_{p_{m}}$ sits on the stone and $\mathbf{v}_k$ sits one position clockwise of $\mathbf{v}_{p_{m}}$. At time $t_0+f_1$, the coin moves from $p_{m}$ to $v_{\alpha(1)}$. 

We want to apply our inductive hypothesis to the edge $\{p_m,v_{\alpha(1)}\}$. Let us show that it can be applied. Since $\mathcal{T}$ is a tree, every edge in $\mathcal{T}$ is a bridge. So, the edge $\{p_m,v_{\alpha(1)}\}$ is a bridge between a simple graph and a tree. Specifically, the induced subgraph on $S^{p_m,v_{\alpha(1)}}$ is a tree, the induced subgraph on $V' \setminus S^{p_m,v_{\alpha(1)}}$ is a simple graph, and $\{p_m,v_{\alpha(1)}\}$ is a bridge connecting these two subgraphs. Hence, we can apply our inductive hypothesis to $\{p_m,v_{\alpha(1)}\}$. 

The first time after $t_0+f_1$ at which the coin moves from $v_{\alpha(1)}$ to $p_{m}$ is $(t_0+f_1)+\eta_{p_m,v_{\alpha(1)}}(N-1)$. Additionally, for all $v_j\in S^{p_m,v_{\alpha(1)}}$ and $v_k \in V'$ with $j\not= k$, there is a unique time in the interval \[ [t_0+f_1+1,(t_0+f_1)+\eta_{p_m,v_{\alpha(1)}}(N-1)] \] at which $\mathbf{v}_j$ sits on the stone and $\mathbf{v}_k$ sits one position clockwise of $\mathbf{v}_j$. Finally, $$\SD_{t_0+f_1+\eta_{p_m,v_{\alpha(1)}}(N-1)+1}$$ is a cyclic rotation of $$\SD((\sigma_{t_0+f_1}(v_{p_m}),\sigma_{t_0+f_1}(v_{\alpha(1)}))\circ \sigma_{t_0+f_1},i_{t_0+f_1}+1).$$

At time $(t_0+f_1)+\eta_{p_m,v_{\alpha(1)}}(N-1)+1$, the coin is sitting on $p_m$. Let $t_1=(t_0+f_1)+\eta_{p_m,v_{\alpha(1)}}(N-1)+1$. At time $t_1$, the stone (carrying the replica $\mathbf{v}_{p_{m}}$) starts to slide through the replica in $\textbf{F}_2$. At time $t_1+f_2$ the stone slides from under $\mathbf{v}_{p_{m}}$ to under $\mathbf{v}_{\alpha(2)}$. So, the coin moves from $p_{m}$ to $v_{\alpha(2)}$ at time $t_1+f_2$. For each $v_k\in F_2$, there is a unique time in the interval $[t_1,t_1+f_2]$ at which $\mathbf{v}_{p_{m}}$ sits on the stone and $\mathbf{v}_{k}$ sits one position clockwise of $\mathbf{v}_{p_{m}}$.

If the degree of $p_m=2$, then $v_{\alpha(2)}=s_\nu$. So, the first time after $t$ at which the coin moves from $p_m$ to $s_\nu$ is 
\begin{align*}
    t_1+f_2&=t_0+f_1+\eta_{p_m,v_{\alpha(1)}}(N-1)+1+f_2\\
    &=t_0+f_1+(m-1)(N-1)+1+f_2\\
    &=t+1+f_1+(m-1)(N-1)+1+f_2\\
    &=t+\sum_{i=1}^nf_i+d+(m-1)(N-1)\\
    &=t+(\nu-1)+d+(m-1)(N-1)\\
    &=t+(N-1)+(m-1)(N-1)\\
    &=t+m(N-1).\\
\end{align*}

Now, consider $d\geq 3$, then $\eta_{p_m,v_{\alpha(2)}} < \eta_{s_\nu,p_m}$. Once again, we can apply our inductive hypothesis to find that the first time after $t_1+f_2$ at which the coin moves from $v_{\alpha(2)}$ to $p_m$ is $t_1+f_2+\eta_{p_m,v_{\alpha(2)}}(N-1)$. Additionally, for all $v_j\in S^{p_m,v_{\alpha(2)}}$ and $v_k \in V'$ with $j\not= k$, there is a unique time in the interval \[[t_1+f_2+1,t_1+f_2+\eta_{p_m,v_{\alpha(2)}}(N-1)]\] at which $\mathbf{v}_j$ and $\mathbf{v}_k$ sits one position clockwise of $\mathbf{v}_j$. Moreover, $$\SD_{t_1+f_2+\eta_{p_m,v_{\alpha(2)}}(N-1)+1}$$ is a cyclic rotation of 

\begin{align*}
 \label{SD}
    \SD(\sigma_{t_1+f_2}(p_m),\sigma_{t_1+f_2}(v_{\alpha(2)})) \circ \sigma_{t_1+f_2}, i_{t_1+f_2}+1).
\end{align*}

Let $t_2=t_1+f_2+\eta_{p_m,v_{\alpha(2)}}(N-1)+1$. We can continue this process for $r \in \{0,1,\ldots,d-1\}$, defining $t_r=t_{r-1}+f_r+\eta_{p_m,\alpha(r)}(N-1)+1$.
Then, time $t_{d-1}+f_d$ is the first time after $t$ at which the coin moves from $v_{p_m}$ to $s_\nu$. Note that
\begin{align*}
     t_{d-1}+f_d&=t+\sum_{i=1}^d f_i+(\eta_{p_m,v_{\alpha(1)}}+\cdots+\eta_{p_m,v_{\alpha(d-1)}})(N-1)+d\\
     &=t+\vert V' \setminus \mathfrak{N}[p_m] \rvert + (\eta_{p_n,v_{\alpha(1)}}+\cdots+\eta_{p_m,v_{\alpha(d-1)}}) (N-1)+d\\
     &=t+(N-1)+(\eta_{p_m,v_{\alpha(1)}}+\cdots+\eta_{p_m,v_{\alpha(d-1)}}) (N-1)\\
     &=t+(N-1)+(m-1)(N-1)\\
     &=t+m(N-1).
\end{align*}

This shows the first statement of the lemma; that is, if $t$ is a time when the coin crosses $\mathcal{B}$ from $s_{\nu}$ to $p_m$, the first time after $t$ at which the coin moves from $p_m$ to $s_{\nu}$ is $t+m(N-1)$. Moreover, the coin spends exactly $N-1$ time steps on each vertex in $V_T$.  

Furthermore, up to cyclic rotation, the stone diagram $\SD_{t_r}$ for $1\leq r \leq d-1$ is obtained from $\SD_{t_{r-1}}$ by sliding the stone (carrying $\mathbf{v}_{p_m}$) clockwise through the replica in $\textbf{F}_r\cup\{\textbf{v}_{\alpha(r)}\}$. It follows directly that, up to cyclic rotation, $\SD_{t+m(N-1)}$ is obtained from $\SD_{t_0}$ by sliding the stone (carrying $\mathbf{v}_{p_{m}}$) clockwise through the replica in 
$$\textbf{F}_1\cup \cdots \cup \textbf{F}_d \cup \{ \textbf{v}_{\alpha(1)}, \ldots , \textbf{v}_{d-1}\} = V' \setminus \{ \textbf{v}_{p_{m}}, \textbf{v}_{s_\nu}\}.$$ Hence, for all $v_k \in V_T$ and $j\not=k$, there is a unique time in $[t+1, t+m(N-1)]$ at which the replica $\mathbf{v}_k$ sits on the stone and $\mathbf{v}_j$ sits one position clockwise of $\mathbf{v}_k$.

Finally, $\SD_{t+m(N-1)+1}$ is obtained from $\SD_{t+m(N-1)}$ by sliding the stone one step clockwise so that it slides from underneath $\mathbf{v}_{p_{m}}$ to underneath $\mathbf{v}_{s_\nu}$. Thus $\SD_{t+m(N-1)+1}$ is a cyclic rotation of \[ \SD((\sigma_t(v_{p_{m}}),\sigma_t(v_{s_\nu}))\circ \sigma_t, i_t+1)\] by $\nu$ positions clockwise. This concludes the proof of \Cref{LEM:TreeBridgeSimple}.
\end{proof}

We conclude this subsection by proving \Cref{THM:SimpleBridgeTree}.

\begin{proof}[Proof of \Cref{THM:SimpleBridgeTree}]
We have shown that $\SD_{t+m(N-1)+1}$ is a cyclic rotation of $$\SD((\sigma_t(v_{p_{m}}),\sigma_t(v_{s_\nu}))\circ \sigma_t, i_t+1).$$ It follows that the effect of the coin traveling to $\mathcal{T}$ is the same, up to cyclic rotation, as if the vertices $s_\nu$ and $p_{m}$ were not adjacent. If the coin is on vertex $i$ of a graph $H$, made up of multiple connected components, it is clear that the labeling on components not containing the vertex $i$ has no effect on the orbit length of orbits containing the state $(\sigma, i)$. So, the labels on vertices in $\mathcal{T}$ have no effect on the orbit length of toric promotion on $G$. This concludes the proof of \Cref{THM:SimpleBridgeTree}.
\end{proof}

\subsection{Complete Graphs Bridged with Simple Graphs}
\label{Sec:CompleteBirdgeSimple}

Consider the complete graph $K=(V_K,E_K)$ with vertex set $V_K=\{a_1,\ldots, a_n\}$ and a connected simple graph $S=(V_S,E_S)$ with vertex set $V_S=\{b_1,\ldots,b_\nu\}$. Let $G=K(a_i)\Bridge S(b_j)$ be a bridge sum of $K$ and $S$, at any two vertices $a_i$ and $b_j$. Let us denote the vertex set of $G$ with $V'$, and let $|V'|=n+\nu$ be denoted $N$.

The goal of this subsection is to prove \Cref{THM:SimpleBridgeComplete}, that is, we want to show that the length of the orbit of toric promotion on $G$ containing the state $(\sigma,i)$ depends only on $n$, $S$, and $\phi$, the restriction of $\sigma$ to $V_S$. In particular, it does not depend on the restriction of $\sigma$ to $V_K$.

Once again, we can rename the vertices in $V'$. Let $a_i$ be named $k_n$, and let all vertices in $V_K\setminus a_i$ be named bijectively with the set $\{k_1, \ldots, k_{n-1}\}$. Furthermore, let $b_j$ be named $s_\nu$ and let all vertices in $V_S\setminus b_j$ be named bijectively with the set $\{s_1,\ldots,s_{\nu-1}\}$. We will refer to the induced subgraph on $\{k_1,\ldots, k_{n}\}$ as $\mathcal{K}$ and the induced subgraph on $\{s_1,\ldots,s_\nu\}$ as $\mathcal{S}$. Then, the edge $\{k_n,s_\nu\}$ is a bridge connecting $\mathcal{K}$ and $\mathcal{S}$, which we will refer to as $\mathcal{B}$.

First, we prove two lemmas that will be useful in the proof of \Cref{THM:SimpleBridgeComplete}. 

\begin{lemma}
\label{LEM:CompleteCWvsCCW}
    If $[t,t']$ is an interval where the coin is on $\mathcal{K}$, then during the interval $[t,t']$, replicas of vertices in $V_K$ can only move clockwise or remain in place, and replicas of vertices in $V_T$ can only move counterclockwise or remain in place. 
\end{lemma}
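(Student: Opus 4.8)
The plan is to read the claim off directly from the stone-and-coin rules, using only the completeness of $\mathcal{K}$ and the fact that $\mathcal{B}=\{k_n,s_\nu\}$ is the unique edge joining $\mathcal{K}$ to $\mathcal{S}$. The crucial observation is purely local: in a single step of toric promotion \emph{at most two replicas move}. If the vertex $v_j$ under the stone and the vertex $v_k$ one position clockwise are adjacent in $G$, then the stone and coin slide but no replica changes position; and if $v_j,v_k$ are not adjacent, then exactly the two replicas $\mathbf{v}_j$ and $\mathbf{v}_k$ swap, with $\mathbf{v}_j$ (the one on the stone) moving one position clockwise and $\mathbf{v}_k$ (its clockwise neighbor) moving one position counterclockwise. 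Thus the only way a replica moves clockwise in a step is by sitting on the stone, and the only way a replica moves counterclockwise is by being the clockwise neighbor of the stone in a swap step.

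Next I would fix an arbitrary application of toric promotion whose source and target times both lie in $[t,t']$, so that the coin sits on some $k_a\in V_K$ both before and after the step, and analyze the two cases. In the slide case no replica moves, so there is nothing to check. In the swap case the replica on the stone is $\mathbf{v}_{k_a}$, a replica of a $V_K$ vertex, and it moves clockwise, which is consistent with the claim. It then remains to identify the replica $\mathbf{v}_k$ one position clockwise, which moves counterclockwise. Since a swap occurs, $v_k$ is not adjacent to $k_a$ in $G$, and this is exactly where completeness enters: if $v_k$ were also in $V_K$ then, as $v_k\neq k_a$ and $\mathcal{K}$ is complete, $v_k$ would be adjacent to $k_a$, contradicting the swap; and if $k_a=k_n$ and $v_k=s_\nu$, then $v_k$ would be adjacent to $k_a$ through the bridge $\mathcal{B}$, again a contradiction. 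Hence $v_k\in V_S$, so the counterclockwise-moving replica is always a replica of an $\mathcal{S}$-vertex.

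Collecting these facts over every step taken while the coin is on $\mathcal{K}$ yields both halves of the lemma. A replica of a $V_K$ vertex can move only clockwise or stay put, since to move counterclockwise it would have to be the clockwise neighbor of the stone in a swap, forcing the stone-vertex $k_a$ and a neighboring $V_K$ vertex to be non-adjacent, which is impossible in a complete graph. Dually, a replica of a $V_S$ vertex can move only counterclockwise or stay put, since to move clockwise it would have to sit on the stone, but the stone bears a $V_K$ replica whenever the coin is on $\mathcal{K}$.

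I do not expect a substantive obstacle here; the argument is a short case analysis, and the work done earlier in defining the stone-and-coin dynamics does almost all of it. The one point that deserves care is the bookkeeping around the bridge: the coin can leave $\mathcal{K}$ only by sliding across $\mathcal{B}$ from $k_n$ to $s_\nu$, so such a step cannot occur between two times that both lie in $[t,t']$. Making this interval convention explicit—so that every step under consideration genuinely has the coin on $\mathcal{K}$ at both endpoints—is the detail I would pin down before running the case analysis.
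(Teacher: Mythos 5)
Your proposal is correct and takes essentially the same approach as the paper: both arguments rest on the observations that a single step moves at most two replicas (the stone's replica one position clockwise, its clockwise neighbor one position counterclockwise in a swap), that the stone carries a $V_K$ replica whenever the coin is on $\mathcal{K}$, and that completeness of $\mathcal{K}$ forces any non-adjacent clockwise neighbor to lie outside $V_K$. Your version is organized slightly more cleanly around which replica moves in which direction, and your reading of the lemma's ``$V_T$'' as the vertex set of the simple graph $S$ matches the intended meaning.
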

  
\begin{proof}
Let $v_\ell$ be a vertex in $V_K$. If the coin is on $v_\ell$, then the replica $\mathbf{v}_{\ell}$ is sitting on the stone. While $\mathbf{v}_{\ell}$ is sitting on the stone, $\mathbf{v}_{\ell}$ can only move clockwise. Furthermore, while $\mathbf{v}_{\ell}$ is sitting on the stone, the stone can either face a replica of a vertex not adjacent to $v_\ell$, or the stone can face a replica of a vertex adjacent to $v_\ell$. We will begin with the former.

Let $v_j$ be a vertex not adjacent to $v_\ell$; since $\mathcal{K}$ is complete, the vertex $v_j$ must be in $V_T$. If $\mathbf{v}_{\ell}$ sits on the stone and the stone faces $\mathbf{v}_j$, the stone (with replica $\mathbf{v}_{\ell}$) will swap places with $\mathbf{v}_j$. Hence, $\mathbf{v}_j$ will move counterclockwise, $\mathbf{v}_{\ell}$ will move clockwise, and all other replicas will remain in place as desired. 

Now consider the case where $\mathbf{v}_{\ell}$ is sitting on the stone and the stone is facing a replica of a vertex adjacent to $v_\ell$. If $v_j$ is adjacent to $v_\ell$, then $v_j$ is in either $V_T$ or $V_K$. 

Consider $v_j \in V_T$. If  $\mathbf{v}_{\ell}$ is sitting on the stone, and the stone is facing $\mathbf{v}_j$, then the stone will slide from underneath $\mathbf{v}_{\ell}$ to underneath $\mathbf{v}_j$. Since the replica $\mathbf{v}_j$ is now sitting on the stone and the coin is now sitting on $v_j$, the coin is no longer in $\mathcal{K}$. In the interval while the replica $\mathbf{v}_{\ell}$ sat on the stone, all replicas of vertices in $V_K$ either remained in place or moved clockwise. Furthermore, all replicas of vertices in $V_T$ either remained in place or moved counterclockwise as desired. 

Now consider $v_j \in V_K$. If $\mathbf{v}_{\ell}$ is sitting on the stone, and the stone is facing $\mathbf{v}_j$, then the stone will slide from underneath $\mathbf{v}_{\ell}$ to underneath $\mathbf{v}_j$. Since $v_j \in V_K$, the above arguments hold. This concludes the proof of \Cref{LEM:CompleteCWvsCCW}.
\end{proof}

The following lemma mirrors \Cref{LEM:TreeBridgeSimple}, adapted for the case of a complete graph. The reader may wish to refer to \Cref{EX:laps} throughout this proof. 

\begin{lemma}
Consider $\mathcal{B}=\{k_n,s_\nu \}$, and let $t$ be a time at which the coin moves from $s_\nu$ to $k_n$. The first time after $t$ at which the coin moves from $k_n$ to $s_\nu$ is $t+n(N-1)$. In addition, $\SD_{t+n(N-1)+1}$ is a cyclic rotation of $\SD((\sigma_t(k_n),\sigma_t(s_\nu))\circ \sigma_t,i_t+1)$ by $\nu$ spaces clockwise. 
\label{LEM:CompleteBridgeSimple}
\end{lemma}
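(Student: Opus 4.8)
The plan is to track the coin throughout the interval on which it stays on $\mathcal{K}$, controlling the replica motions with \Cref{LEM:CompleteCWvsCCW}, and then to read off both the return time $t+n(N-1)$ and the final stone diagram from the trajectory. Two facts are used throughout: the stone advances exactly one position clockwise at every step, and the only edge joining $\mathcal{K}$ to $\mathcal{S}$ is $\mathcal{B}$, so the coin can leave $\mathcal{K}$ only by sliding from $k_n$ to $s_\nu$, which happens precisely when the stone sits on $\mathbf{v}_{k_n}$ with $\mathbf{v}_{s_\nu}$ one position clockwise. By \Cref{LEM:CompleteCWvsCCW}, on this interval every $V_K$-replica moves only clockwise (and only while carrying the stone) and every $V_S$-replica only counterclockwise, so replicas of the same type never cross one another. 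Hence the clockwise cyclic order $\mathbf{v}_{c_0}=\mathbf{v}_{k_n},\mathbf{v}_{c_1},\dots,\mathbf{v}_{c_{n-1}}$ of the $\mathcal{K}$-replicas (read at time $t+1$) and the clockwise cyclic order of the $\mathcal{S}$-replicas are both invariant. The $\mathcal{K}$-replicas cut $\mathsf{Cycle}_N$ into arcs $g_0,\dots,g_{n-1}$, where $g_j$ is the arc immediately clockwise of $\mathbf{v}_{c_j}$, and $\mathbf{v}_{s_\nu}$ begins in $g_{n-1}$, immediately counterclockwise of $\mathbf{v}_{k_n}$.

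Since $\mathcal{K}$ is complete, whenever the coin sits on $c_j$ the stone slides forward through every $\mathcal{S}$-replica in $g_j$ --- each such step is a swap carrying one $\mathcal{S}$-replica from $g_j$ into $g_{j-1}$ --- and then slides on to $c_{j+1}$; the one exception is that when $c_j=k_n$ meets $\mathbf{v}_{s_\nu}$ the bridge forces a slide across $\mathcal{B}$, ending the interval. Thus the coin traverses $c_0,c_1,\dots,c_{n-1}$ cyclically, performing $n$ slides per lap. Tracking $\mathbf{v}_{s_\nu}$ is the crux: because it is never displaced by $c_0=k_n$, on each lap it is pushed from its current arc $g_j$ into $g_{j-1}$ exactly once, so after $n-1$ laps it has migrated from $g_{n-1}$ to $g_0$, and on the coin's next arrival at $k_n$ the two meet and the coin crosses. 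This accounts for $n-1$ full laps together with the final crossing slide, that is, $n(n-1)+1$ slides.

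To finish I would count the swaps: I claim that before the crossing each $\mathcal{K}$-replica passes each $\mathcal{S}$-replica exactly once, the sole exception being that $\mathbf{v}_{k_n}$ never passes $\mathbf{v}_{s_\nu}$. This gives $n\nu-1$ swaps, so the coin spends $n(n-1)+1+(n\nu-1)=n(N-1)$ steps on $\mathcal{K}$ (equivalently, exactly $N-1$ steps on each of the $n$ vertices), which is the first assertion. Each swap advances its $\mathcal{K}$-replica one step clockwise and its $\mathcal{S}$-replica one step counterclockwise, so over the interval the net displacements are $\nu-1$ clockwise for $\mathbf{v}_{k_n}$, $\nu$ clockwise for each other $\mathcal{K}$-replica, $n-1$ counterclockwise for $\mathbf{v}_{s_\nu}$, and $n$ counterclockwise for each other $\mathcal{S}$-replica. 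As $\SD((\sigma_t(k_n),\sigma_t(s_\nu))\circ\sigma_t,i_t+1)$ is simply $\SD_{t+1}$ with $\mathbf{v}_{k_n}$ and $\mathbf{v}_{s_\nu}$ interchanged, comparing these displacements shows that $\SD_{t+n(N-1)+1}$ is its image under $\text{cyc}^{\nu}$; the shift $\nu$ is exactly the stone's net clockwise advance, since $n(N-1)\equiv\nu\pmod N$.

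I expect the swap count to be the main obstacle, namely proving that every non-bridge pair $(\mathbf{v}_{k_a},\mathbf{v}_{s_b})$ swaps \emph{exactly} once. The clean ``once per lap'' bookkeeping that fixes the number of laps applies only to $\mathbf{v}_{s_\nu}$; a generic $\mathcal{S}$-replica can be pushed twice in a single lap (once by $\mathbf{v}_{k_n}$ as it passes through $g_0$, and once more later in the lap), so that tally does not directly bound the other pairs. I therefore anticipate needing a separate monotonicity argument --- essentially that, relative to the rigid cyclic order of the $\mathcal{K}$-replicas, each $\mathcal{S}$-replica completes exactly one full counterclockwise revolution before the crossing (one revolution short, for $\mathbf{v}_{s_\nu}$) --- to rule out any pair swapping twice and to force every non-bridge pair to swap. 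With that established, the displacement computation and the identification of $\SD_{t+n(N-1)+1}$ as a $\text{cyc}^{\nu}$-image are routine.
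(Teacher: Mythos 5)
Your strategy is the same as the paper's: confine attention to the interval on which the coin stays on $\mathcal{K}$, use \Cref{LEM:CompleteCWvsCCW} to force all $V_K$-replicas clockwise and all $V_S$-replicas counterclockwise, split the interval into laps between successive returns of the stone to $\mathbf{v}_{k_n}$, and finish by comparing net displacements. The problem is that the step you yourself flag as ``the main obstacle'' --- that every non-bridge pair $(\mathbf{v}_{k_a},\mathbf{v}_{s_b})$ swaps exactly once --- is not a loose end to be patched later; it is the entire quantitative content of the lemma. Your argument as written pins down the number of laps ($n-1$, via tracking which arc contains $\mathbf{v}_{s_\nu}$), but gives no control on the \emph{duration} of any lap, so neither the return time $t+n(N-1)$ nor the displacement table feeding the $\mathrm{cyc}^{\nu}$ identification is actually established. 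And as you correctly observe, the clean ``once per lap'' bookkeeping genuinely fails for a generic $\mathcal{S}$-replica, which can be pushed twice in a single lap, so this is not a routine omission.

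The paper closes this gap in two stages, and you would need an equivalent of both. First, a global pincer on the bridge pair: $\mathbf{v}_{s_\nu}$ starts one position counterclockwise of $\mathbf{v}_{k_n}$ and must end one position clockwise of it without the two ever crossing, so their relative motion toward each other totals $N-2$; since $\mathbf{v}_{k_n}$ can advance at most $\nu-1$ (one swap per replica of $V_S\setminus\{s_\nu\}$) and $\mathbf{v}_{s_\nu}$ can retreat at most $n-1$, both bounds are attained exactly. Second, the explicit arc bookkeeping: with $A_0,\dots,A_{n-1}$ the arcs of $\mathcal{S}$-replicas between consecutive $\mathcal{K}$-replicas at time $t$, the paper computes that lap $0$ lasts exactly $N+|A_0|$ steps and transforms the arcs into $A'_r=A_{r+1}$ for $r\le n-3$, $A'_{n-2}=A_{n-1}\cup\{\mathbf{v}_{s_\nu}\}\cup A_0$, $A'_{n-1}=\emptyset$, and then inductively that lap $\ell$ lasts $N+|A_\ell|$ steps; summing yields $t'=t+n(N-1)$ and simultaneously records exactly which replicas each $\mathbf{v}_{k_a}$ passes in each lap, which is the ``exactly once'' statement you need for the displacement count. (Note that the double-pushing you identified is precisely what the extra $+|A_\ell|$ terms absorb.) Your displacement table and the final $\mathrm{cyc}^{\nu}$ comparison are correct once this is in place, but without the lap-duration computation the proof is a skeleton.
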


\begin{proof}
Firstly, if $n=1$ or $n=2$, then $K$ is a tree. Therefore, by \Cref{LEM:TreeBridgeSimple}, the claim holds. For the rest of this proof, we assume $n\geq 3$.

At time $t$, the coin moves from $s_\nu$ to $k_n$, or equivalently, at time $t$, the stone slides from under $\mathbf{v}_{s_\nu}$ to under $\mathbf{v}_{k_n}$. Therefore, at time $t+1$ the replica $\mathbf{v}_{k_n}$ sits on the stone. Let $t'$ be the first time after $t$ when the coin moves from $k_n$ to $s_\nu$, or equivalently, the first time after $t$ when the stone slides from under $\mathbf{v}_{k_n}$ to under $\mathbf{v}_{s_\nu}$. We want to show that $t'=t+n(N-1)$.

Let $\mathbf{v}_{\alpha(0)}, \ldots, \textbf{v}_{\alpha(n-1)}$ be the replicas of vertices in $V_K$, indexed so that they appear in clockwise cyclic order in $\SD_{t}$, and so that $\alpha(0)=\alpha(n)=k_n$. Since the coin moves at time $t$, the labeling is constant from time $t$ to $t+1$. Thus the indices $\alpha(i)$ are constant for $[t,t+1]$.

Let $\mathbf{v}_{\beta(0)}, \ldots, \textbf{v}_{\beta(n-1)}$ be the replicas of vertices in $V_S$, indexed so that they appear in clockwise cyclic order in $\SD_{t}$, and so that $\beta(0)=\beta(\nu)=s_\nu$. Again, the coin moves at time $t$. So, the labeling is constant from time $t$ to $t+1$. Thus the indices $\beta(j)$ are constant for $[t,t+1]$.

Additionally, let us define the sets $A_0,\ldots,A_{n-1}$, so that the set $A_0$ is the set of replicas on the clockwise cyclic path from $\mathbf{v}_{\alpha(0)}$ to $\mathbf{v}_{\alpha(1)}$ in $\SD_{t}$ (not including $\mathbf{v}_{\alpha(0)}$ and $\mathbf{v}_{\alpha(1)}$), and for $0\leq r \leq n-2$, the set $A_r$ is the set of replicas on the clockwise cyclic path from $\mathbf{v}_{\alpha(r)}$ to $\mathbf{v}_{\alpha(r+1)}$ in $\SD_{t}$ (not including $\mathbf{v}_{\alpha(r)}$ and $\mathbf{v}_{\alpha(r+1)}$). Finally, let $A_{n-1}$ be the set of replicas on the clockwise cyclic path from $\mathbf{v}_{\alpha(n-1)}$ to $\mathbf{v}_{\alpha(0)}$ (not including $\mathbf{v}_{\alpha(n-1)}$, $\mathbf{v}_{\alpha(0)}$, or $\mathbf{v}_{\beta(0)}$). Then, reading clockwise starting from $\mathbf{v}_{\alpha(0)}$, the cyclic order of replicas in $\SD_t$ is $$\mathbf{v}_{\alpha(0)}, A_0, \textbf{v}_{\alpha(1)}, A_1,\ldots, {\textbf{v}_{\alpha(n-2)}}, A_{n-2}, {\textbf{v}_{\alpha(n-1)}}, A_{n-1}, \textbf{v}_{\beta(0)}.$$

We will evaluate how the stone diagram evolves during the interval $[t,t']$. By assumption, at time $t+1$, the stone sits on $\mathbf{v}_{\alpha(0)}$. Furthermore, at time $t+1$ the replica $\mathbf{v}_{\alpha(0)}$ sits on the stone, and $\mathbf{v}_{\beta(0)}$ sits one position counterclockwise of $\mathbf{v}_{\alpha(0)}$. In comparison, at time $t'$, the replica $\mathbf{v}_{\alpha(0)}$, sits on the stone and $\mathbf{v}_{\beta(0)}$ sits one position clockwise of $\mathbf{v}_{\alpha(0)}$. So, during the interval $[t+1,t']$, the relative movement of $\mathbf{v}_{\alpha(0)}$ and $\mathbf{v}_{\beta(0)}$ towards each other sums to $N-2$ spaces.

 During the interval $[t+1,t']$, the coin is on $\mathcal{K}$. By \Cref{LEM:CompleteCWvsCCW}, replicas of vertices in $V_K$ can only move clockwise or remain in place, and replicas of vertices in $V_S$ can only move counterclockwise or remain in place. So, during $[t+1,t']$, the replica $\mathbf{v}_{\alpha(0)}$ can only move clockwise, and only moves if it moves through the replica of a vertex that is not adjacent to $k_n$. There are $n-1$ replicas of this form. So, during $[t+1,t']$ the replica $\mathbf{v}_{\alpha(0)}$ can only move $n-1$ spaces clockwise. Similarly, during $[t+1,t']$, the replica $\mathbf{v}_{\beta(0)}$ can only move counterclockwise, and only moves if the replica of a vertex that is not adjacent to $s_\nu$ sits on the stone and the stone moves through $\mathbf{v}_{\beta(0)}$. Only replicas of the vertices in $V_K$ sit on the stone during $[t+1,t']$ and there are $\nu-1$ vertices in $V_K$ that are not adjacent to $s_\nu$. So, the replica $\mathbf{v}_{\beta(0)}$ can only move counterclockwise $\nu-1$ spaces during $[t+1,t']$. Since the total movement of $\mathbf{v}_{\alpha(0)}$ and $\mathbf{v}_{\beta(0)}$ during $[t+1,t']$ must sum to $N-2$ spaces, and $\mathbf{v}_{\alpha(0)}$ can only move $n-1$ clockwise spaces and $\mathbf{v}_{\beta(0)}$ can only move $\nu-1$ counterclockwise spaces, it is clear that during the interval $[t+1,t']$, the replica $\mathbf{v}_{\alpha(0)}$ will move exactly $n-1$ spaces clockwise and the replica $\mathbf{v}_{\beta(0)}$ will move exactly $\nu-1$ spaces counterclockwise. It follows that $t'$ is exactly the time when $\mathbf{v}_{\alpha(0)}$ has moved through all $A_r$. We will show that $t'=t+n(N-1)$.
 
Let $t+1=t_0$. At time $t_0$, the replica $\mathbf{v}_{\alpha(0)}$ receives the stone. Let $t_1$ be the first time after $t_0$ when $\mathbf{v}_{\alpha(0)}$ receives the stone. In general, let $t_{\ell}$ be the $\ell^{th}$ time after $t_0$ when $\mathbf{v}_{\alpha(0)}$ receives the stone. We call an interval of the form $[t_{\ell},t_{\ell+1}-1]$ a lap; specifically, we call $[t_{\ell},t_{\ell+1}-1]$ lap $\ell$. Furthermore, let $t_\ell^{(i)}$ be the time during lap $\ell$, when the replica $\mathbf{v}_{\alpha(i)}$ receives the stone. Then $t_\ell=t_\ell^0$ for all $\ell$.

We will now evaluate how the stone diagram evolves during lap $0$, or equivalently, the interval $[t_0^{(0)},t_1^{(0-1)}]$. At time $t_0=t_0^{(0)}$, the replica $\mathbf{v}_{\alpha(0)}$ sits on the stone and starts sliding through the replica in $A_0$. At time $t^{(0)}_0+\lvert A_0\rvert$, the stone slides from underneath $\mathbf{v}_{\alpha(0)}$ to under $\mathbf{v}_{\alpha(1)}$. So, at time $t^{(0)}_0+\lvert A_0 \lvert + 1$,  the replica $\mathbf{v}_{\alpha(1)}$ receives the stone. Hence, $t^{(0)}_0+\lvert A_0 \lvert + 1=t_0^{(1)}$. At time $t_0^{(1)}$, the replica $\mathbf{v}_{\alpha(1)}$ sits on the stone and the stone starts sliding through the replicas in $A_1$. It follows that for $1 \leq r \leq n-1$, we have that $t_0^{(r)}=t^{(r-1)}_0+ \lvert A_{r-1} \rvert +1$. At time $t^{(n-1)}_0$, the replica $\mathbf{v}_{\alpha(n-1)}$ sits on the stone. Since $\mathbf{v}_{\alpha(0)}$ moved through the set $A_0$, the set of replicas on the clockwise path from $\mathbf{v}_{\alpha(n-1)}$ to $\mathbf{v}_{\alpha(0)}$ in $\SD_{t^{(n-1)}_0}$ is $A_{n-1}\cup \mathbf{v}_{\beta(0)} \cup A_0$. Hence, at time $t^{(n-1)}_0 + \lvert A_{n-1}\rvert + 1 + \lvert A_0 \rvert$, the stone slides from under $\mathbf{v}_{\alpha(n-1)}$ to under $\mathbf{v}_{\alpha(0)}$. Therefore, $t^{(n-1)}_0 + \lvert A_{n-1}\rvert + 1 + \lvert A_0 \rvert+1$ is the first time after $t^{(0)}_0$ when the replica $\mathbf{v}_{\alpha(0)}=\mathbf{v}_{k_n}$ receives the stone; so, $t_1^{(0)}=t^{(n-1)}_0 + \lvert A_{n-1}\rvert + 1 + \lvert A_0 \rvert+1$. Solving for $t_1^{(0)}$, we have 
\begin{align*}
t_1^{(0)}&=t^{(n-1)}_0 + \lvert A_{n-1}\rvert + 1 + \lvert A_0 \rvert+1\\
&=(t^{(n-2)}_0+\lvert A_{n-2} \rvert +1)+ \lvert A_{n-1}\rvert + 1 + \lvert A_0 \rvert+1\\
    &=t_0^{(0)}+\sum_{i=0}^{n-1}\lvert A_{i}\rvert+|A_0|+n+1\\
    &=t_0^{(0)}+\nu+|A_0|+n\\
    &=t_0^{(0)}+N+|A_0|.
\end{align*}

We have shown that $t_1^{(0)}=t_0^{(0)}+N+ \lvert A_0 \rvert$. Therefore, lap $0$ will take $N+ \lvert A_0\rvert$ time steps.  

We will now define new notation to evaluate the stone diagrams for laps $1$ through $n-2$. Let $A'_0$ be the set of replicas on the clockwise cyclic path from $\mathbf{v}_{\alpha(0)}$ to $\mathbf{v}_{\alpha(1)}$ in $\SD_{t_1^{(0)}}$; then,  $A'_0=A_1$. In general, let $A'_r$, for $0\leq r \leq n-1$, be the set of replicas on the clockwise cyclic path from $\mathbf{v}_{\alpha(r)}$ to $\mathbf{v}_{\alpha(r+1)}$ in $\SD_{t_1^{(0)}}$. Then, for $0\leq r \leq n-3$, the set $A'_r$ is equal to $A_{r+1}$, the set $A'_{n-2}$ is equal to $A_{n-1}\cup \mathbf{v}_{\beta(0)} \cup A_0$, and the set $A'_{n-1}$ is empty. In other words, the clockwise cyclic order of replicas in $\SD_{t_1^{(0)}}$, starting from $\mathbf{v}_{\alpha(0)}$, is given by

$$\mathbf{v}_{\alpha(0)}, A_0', \mathbf{v}_{\alpha(1)}, A_1', \ldots, \mathbf{v}_{\alpha(n-2)}, A_{n-2}', \mathbf{v}_{\alpha(n-1)}.$$

We can now evaluate how the stone diagrams evolve during lap $1$, or equivalently, during the interval $[t_1^{(0)}, t_2^{(0-1)}]$. At time $t_1^{(0)}$, the replica $\mathbf{v}_{\alpha(0)}$ sits on the stone, and the stone starts to move through the replicas in $A'_0$. At time $t_1^{(0)}+|A'_0|$, the stone slides from under $\mathbf{v}_{\alpha(0)}$ to under $\mathbf{v}_{\alpha(1)}$; hence, $t_1^{(1)}=t_1^{(0)}+|A'_0|+1$. For $1\leq r \leq n-1$, it follows that $t_1^{(r)}=t_1^{(r-1)}+|A'_{r-1}|+1$. At time $t_1^{(n-1)}$, the replica $\mathbf{v}_{\alpha(n-1)}$ sits on the stone. Since $\mathbf{v}_{\alpha(0)}$ moved through $A'_0$ during lap $1$, the set of replicas on the clockwise cyclic path from $\mathbf{v}_{\alpha(n-1)}$ to $\mathbf{v}_{\alpha(0)}$ in $\SD_{t_1^{(n-1)}}$ is $A'_{n-1} \cup {A'_0}$. Thus, $t_1^{(n-1)}+|A'_{n-1} \cup {A'_0}|+1$ is the second time after $t^{(0)}_0$ when the replica $\mathbf{v}_{\alpha(0)}=\mathbf{v}_{k_n}$ receives the stone. So, $t_2^{(0)}=t_1^{(n-1)}+|A'_{n-1} \cup {A'_0}|+1$. Recall, that $A'_{n-1}$ is empty, so $t_2^{(0)}=t_1^{(n-1)}+|{A'_0}|+1$. We can solve for $t_2^{(0)}$. Thus,
\begin{align*}
    t_2^{(0)}&=t_1^{(n-1)}+|{A'_0}|+1\\
   & =(t_1^{(n-2)}+|A'_{(n-2)}|+1)+|{A'_0}|+1\\
    &=t_1^{(0)}+\sum_{i=1}^{n-2}|A'_i|+|A'_{0}|+n\\
    &=t_1^{(0)}+\sum_{i=1}^{n-1}|A_i|+|A_{1}|+n+1\\
    &=t_1^{(0)}+\nu+n+|A_{1}|\\
   & =t_1^{(0)}+N+|A_1|.
\end{align*}
We can conclude that lap $1$ will take $N+|A_1|$ time steps. 

For $1\leq \ell \leq n-2$, the set of replicas on the clockwise cyclic path from $\mathbf{v}_{n-1}$ to $\mathbf{v}_{\alpha(0)}$ in $\SD_{t_\ell^{(0)}}$ will be empty. During lap $\ell$, for $1\leq \ell \leq n-2$, the replica $\mathbf{v}_{n-1}$ will move through $\mathbf{v}_{\beta(j)}$ if any only if the replica $\mathbf{v}_{\alpha(0)}$ moves through $\mathbf{v}_{\beta(j)}$. Furthermore, during lap $\ell$, for $1 \leq \ell \leq n-2$, the replica $\mathbf{v}_{\alpha(0)}$ moves through the set $A'_{\ell-1}$. It follows that, during lap $\ell$, for $1 \leq \ell \leq n-2$, the replica $\mathbf{v}_{n-1}$ also moves through the set $A'_{\ell-1}$. Additionally, during lap $\ell$, for $1 \leq \ell \leq n-2$, the replica $\mathbf{v}_{\alpha(i)}$, for $1 \leq i \leq n-2$, moves through $A'_{(\ell-1)+i}$. Therefore, lap $\ell$, for $1 \leq \ell \leq n-2$, will take $N+|A'_{\ell-1}|$ time steps, or equivalently, $N+|A_\ell|$ time steps. We showed that lap $0$ will take $N+|A_0|$ time steps. Therefore, $t_{n-1}^{(0)}$ is given by 
\begin{align*}
    t_{n-1}^{(0)}=t_0^{(n-2)}+N+|A_{n-2}|.
\end{align*}
 At time $t_{n-1}^{(0)}$, the replica $\mathbf{v}_{\alpha(0)}$ sits on the stone and starts to move through the replica in $A_{n-1}$. At time $t_{n-1}^{(0)}+|A_{n-1}|$, the stone slides from under $\mathbf{v}_{\alpha(0)}$ to under $\mathbf{v}_{\beta(0)}$. Hence, time $t_{n-1}^{(0)}+|A_{n-1}|$ is the first time after $t_0^{(0)}$ when the stone slides from under $\mathbf{v}_{\alpha(0)}$ to under $\mathbf{v}_{\beta(0)}$. So, $t'=t_{n-1}^{(0)}+|A_{n-1}|$. We can solve for $t'$. We have,
\begin{align*}
    t'&=t_{n-1}^{(0)}+|A_{n-1}|\\
    &=t_0^{(0)}+(n-1)(N)+\sum_{i=1}^{n-1}|A_i|\\
   & =t_0^{(0)}+(n-1)(N)+(\nu-1)\\
    &=t_0^{(0)}+(n-1)+(n-1)(N-1)+(\nu-1)\\
   & =t+1+(n-1)+(n-1)(N-1)+(\nu-1)\\
    &=t+(N-1)+(n-1)(N-1)\\
   & =t+n(N-1).
\end{align*}
We have now shown the first statement of the lemma; that is, the first time after $t$ at which the coin moves from $k_n$ to $s_\nu$ is $t+n(N-1)$. 

Now we will show the second statement, that is, we will show that $\SD_{t+n(N-1)+1}$ is a cyclic rotation of $\SD((\sigma_t(k_n),\sigma_t(s_\nu))\circ\sigma_t,i_t+1)$ by $\nu$ positions clockwise. This follows from the evaluation of the stone diagram from time $t$ to $t+n(N-1)$. 

We showed that in the interval $[t_0^{(0)},t+n(N-1)]$ each replica $\mathbf{v}_{\alpha(i)}$ for $1 \leq i \leq n-1$ moved through exactly $\nu$ replicas, namely the replicas in the set $A_0\cup \cdots \cup A_{n-1}\cup \textbf{v}_{\beta(0)}$. Thus, each $\mathbf{v}_{\alpha(i)}$ for $1 \leq i \leq n-1$ moved exactly $\nu$ spaces in the clockwise direction during the interval $[t_0^{(0)},t+n(N-1)]$. Additionally, we showed that $\mathbf{v}_{\alpha(0)}$ moved through exactly $\nu-1$ replicas during $[t_0^{(0)},t+n(N-1)]$, namely the replicas in the set $A_0\cup \cdots \cup A_{n-1}$. Hence, $\mathbf{v}_{\alpha(0)}$ moved $\nu-1$ spaces in the clockwise direction during the interval $[t_0^{(0)},t+n(N-1)]$. Moreover, each set $A_0\cup \cdots \cup A_{n-1}$ was moved through by $n$ replicas, namely all $\mathbf{v}_{\alpha(i)}$. Therefore, each replica in $A_0\cup \cdots \cup A_{n-1}$ moved $n$ spaces in the counterclockwise direction during the interval $[t_0^{(0)},t+n(N-1)]$. Finally, $\mathbf{v}_{\beta(0)}$ was moved through by $n-1$ replicas, namely all $\mathbf{v}_{\alpha(i)}$ for $1\leq i\leq n-1$. Therefore, the replica $\mathbf{v}_{\beta(0)}$ moved $n-1$ spaces in the counterclockwise direction during the interval $[t_0^{(0)},t+n(N-1)]$. Notice that moving $x$ spaces clockwise is the same as moving $N-x$ spaces counterclockwise. During the interval $[t_0^{(0)},t+n(N-1)]$, all replicas (other than $\mathbf{v}_{\alpha(0)}$ and $\mathbf{v}_{\beta(0)}$) moved either $n$ spaces clockwise or $\nu$ spaces counterclockwise. Since $N-\nu=n$, we can conclude that in $\SD_{t+n(N-1)+1}$, all replicas (other than $\mathbf{v}_{\alpha(0)}$ and $\mathbf{v}_{\beta(0)}$) are $\nu$ spaces 
clockwise of their positions in $\SD_t$. Furthermore, the replica $\mathbf{v}_{\alpha(0)}=\mathbf{v}_{k_n}$ is $\nu-1$ spaces clockwise of its position in $\SD_t$, and the replica $\mathbf{v}_{\beta(0)}=\mathbf{v}_{s_\nu}$ is $n-1$ spaces counterclockwise ($\nu+1$ spaces clockwise) of its position in $\SD_t$. The stone diagram $\SD((\sigma_t(k_n),\sigma_t(s_\nu))\circ\sigma_t,i_t+1)$ is obtained from $\SD_t$ by swapping the positions of $\mathbf{v}_{k_n}$ (and the stone) and $\mathbf{v}_{s_\nu}$. This moves $\mathbf{v}_{k_n}$ (and the stone) one space clockwise and $\mathbf{v}_{s_\nu}$ one space counterclockwise. Therefore, we have shown that $\SD_{t+n(N-1)+1}$ is a cyclic rotation of $\SD((\sigma_t(k_n),\sigma_t(s_\nu))\circ\sigma_t,i_t+1)$ by $\nu$ positions clockwise.
\end{proof}

\begin{example}
In \Cref{Fig:LapsExample}  we show an orbit of toric promotion on the bridge sum of a simple graph on $6$ vertices and the complete graph on $5$ vertices, where ${k_n}=v_7$,  ${s_\nu}=v_6$, and $\mathcal{B}=\{v_6,v_7\}$. We show the stone and coin diagrams at time $t$, when the coin crosses $\mathcal{B}$ from $v_6$ to $v_7$. We show the rest of this orbit divided into laps. Each row corresponds to a lap. In the first row, we show the first lap (lap $0$). The sets $A_i$ and $\mathbf{v}_6$ are highlighted in different colored ovals: $A_0=\{\textbf{v}_5\}$ is in red, $A_1=\{\textbf{v}_3\}$ is in orange,  $A_2=\{\textbf{v}_4\}$ is in yellow, $A_3=\{\textbf{v}_2\}$ is in green, $A_4=\{\textbf{v}_1\}$ is in blue, and $\mathbf{v}_6$ is in pink. In laps $1,2$ and $3$, the sets $A'_i$ are highlighted in colored rectangles:  $A_0'=\{\textbf{v}_3\}$ is in green, $A_1'=\{\textbf{v}_4\}$ is in pink,  $A_2'=\{\textbf{v}_2\}$ is in blue, and $A_3'=\{\textbf{v}_1,\textbf{v}_6, \textbf{v}_5\}$ is in yellow. In the last row, $\mathbf{v}_7$ moves through $A_4$ to complete the orbit.
\label{EX:laps}
\end{example}
\newpage

\begin{figure}[H]
    \centering
    \includegraphics[width=0.985\linewidth]{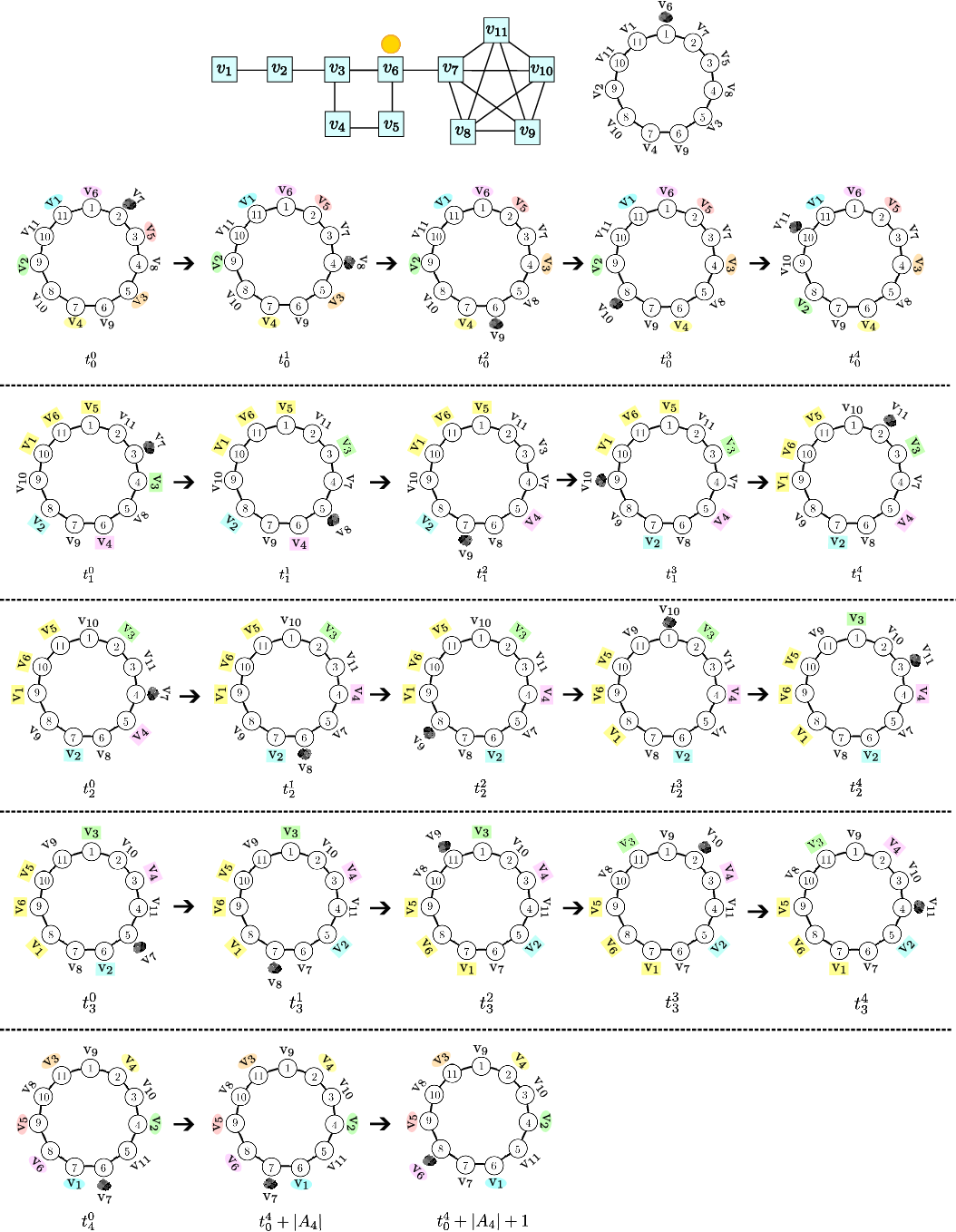}
    \caption{We give an example of \Cref{LEM:CompleteBridgeSimple}.}
    \label{Fig:LapsExample}
\end{figure}

Now we will prove \Cref{THM:SimpleBridgeComplete}.
\begin{proof}[Proof of \Cref{THM:SimpleBridgeComplete}]
We have shown that $\SD_{t+n(N-1)+1}$ is a cyclic rotation of $\SD((\sigma_t(k_n),\sigma_t(s_\nu))\circ\sigma_t,i_t+1)$. Thus, up to cyclic rotation, the effect of the coin traveling to $\mathcal{K}$ is the same as if the vertices $k_n$ and $s_\nu$ were not adjacent. If the coin is on vertex $i$ of a graph $H$, made up of multiple connected components, it is clear that the labeling on components not containing the vertex $i$ has no effect on the orbit length of orbits containing the state $(\sigma,i)$. So, the labels on vertices in $\mathcal{K}$ have no effect on the orbit length of toric promotion on $G$.
\end{proof}

\section{Orbit Lengths of Toric Promotion}
\label{orbitlengths}

The orbit length of toric promotion on trees and complete graphs individually does not depend on the initial labeling. In this section, we prove \Cref{THM:completebridgecomplete,THM:TreeBridgeComplete,THM:Corona}.  We conclude that the orbit length of toric promotion on a single bridge sum of any combination of trees and complete graphs and the orbit length of toric promotion on the corona product of a complete graph with a tree is given by $N(N-1)$, where $N$ is the number of vertices in the promoted graph. 

\subsection{Orbit Lengths of Toric Promotion on Certain Bridge Sums}

Consider two complete graphs $K_1$ and $K_2$ on $n_1$ and $n_2$ vertices, respectively. Suppose $K_1(v_1)\Bridge K_2(v_2)$ is a bridge sum of $K_1$ and $K_2$ at any two vertices $v_1$ and $v_2$. We will refer to the edge $\{v_1,v_2\}$ as $\mathcal{B}$. The graph $K_1(v_1)\Bridge K_2(v_2)$ has $n_1+n_2$ vertices, let $N=n_1+n_2$

We will now prove \Cref{THM:completebridgecomplete}; that is, we will show that all orbits of toric promotion on $K_1(v_1)\Bridge K_2(v_2)$ have length $N(N-1)$. The proof follows from \Cref{LEM:CompleteBridgeSimple}. 

\begin{proof}[Proof of \Cref{THM:completebridgecomplete}]
Let $t$ be a time when the coin crosses $\mathcal{B}$ from $v_1$ to $v_2$. By \Cref{LEM:CompleteBridgeSimple}, the first time after $t$ when the coin crosses $\mathcal{B}$ from $v_2$ to $v_1$ is $t+n_2(N-1)$. Additionally, $\SD_{t+n_2(N-1)+1}$ is a cyclic rotation of $\SD((\sigma_t(v_1),\sigma_t(v_2)\circ \sigma_t,i_t+1)$ by $n_1$ positions clockwise.

Since the coin crosses $\mathcal{B}$ from $v_2$ to $v_1$ at time $t+n_2(N-1)$, by \Cref{LEM:CompleteBridgeSimple} the first time after $t+n_2(N-1)$ when the coin moves across $\mathcal{B}$ from $v_1$ to $v_2$ is 
\begin{align*}
    t+n_1(N-1)+n_2(N-1)=t+N(N-1).
\end{align*}
Additionally, $\SD_{t+N(N-1)+1}$ is a cyclic rotation of $$\SD((\sigma_{t+n_2(N-1)}(v_1),\sigma_{t+n_2(N-1)}(v_2))\circ \sigma_{t+n_2(N-1)}, i_{t+n_2(N-1)}+1)$$ by $n_1$ positions clockwise.

The coin moves at time $t+n_2(N-1)$. So, the cyclic order of replicas in $\SD_{t+n_2(N-1)}$ is the same as the cyclic order of replicas in $\SD_{t+n_2(N-1)+1}$. Additionally, the coin sits on $v_1$ at time $t+n_2(N-1)+1$. Furthermore, the coin moves at time $t+N(N-1)$. So, the cyclic order of replicas in $\SD_{t+(N)(N-1)}$ is the same as the cyclic order of replicas in $\SD_{t+(N)(N-1)+1}$. Additionally, the coin sits on $v_1$ at time $t+N(N-1)$. Therefore, $\SD_{t+N(N-1)}$ is a cyclic rotation of $$\SD((\sigma_{t+n_2(N-1)+1}(v_1),\sigma_{t+n_2(N-1)+1}(v_2))\circ \sigma_{t+n_2(N-1)+1}, i_{t+n_2(N-1)+1}-1)$$ by $n_2$ positions clockwise. It follows that $\SD_{t+N(N-1)}$ is a cyclic rotation of $\SD_t$ by $N$ positions clockwise. So, $\SD_{t+N(N-1)}=\SD_t$. 
\end{proof}

Now consider any tree $T$ on $m$ vertices, and the complete graph $K$ on $n$ vertices. Suppose $T(v_1)\Bridge K(v_2)$ is a bridge sum of $T$ and $K$ at any two vertices $v_1$, $v_2$.  We will refer to the edge $\{v_1,v_2\}$ as $\mathcal{B}$. The graph $T(v_1)\Bridge K(v_2)$ has $m+n$ vertices, let $N=m+n$.

We will now prove \Cref{THM:TreeBridgeComplete}; that is, we will show that all orbits of toric promotion on $G$ have length $N(N-1)$. This proof follows the structure of the proof of \Cref{THM:TreeBridgeComplete} exactly, except here we use \Cref{LEM:CompleteBridgeSimple} and \Cref{LEM:TreeBridgeSimple}. \Cref{Fig:TreeBridgeCompleteExample} illustrates an example of the following argument. 

\begin{figure}[htbp]
     \centering
    \includegraphics[width=\linewidth]{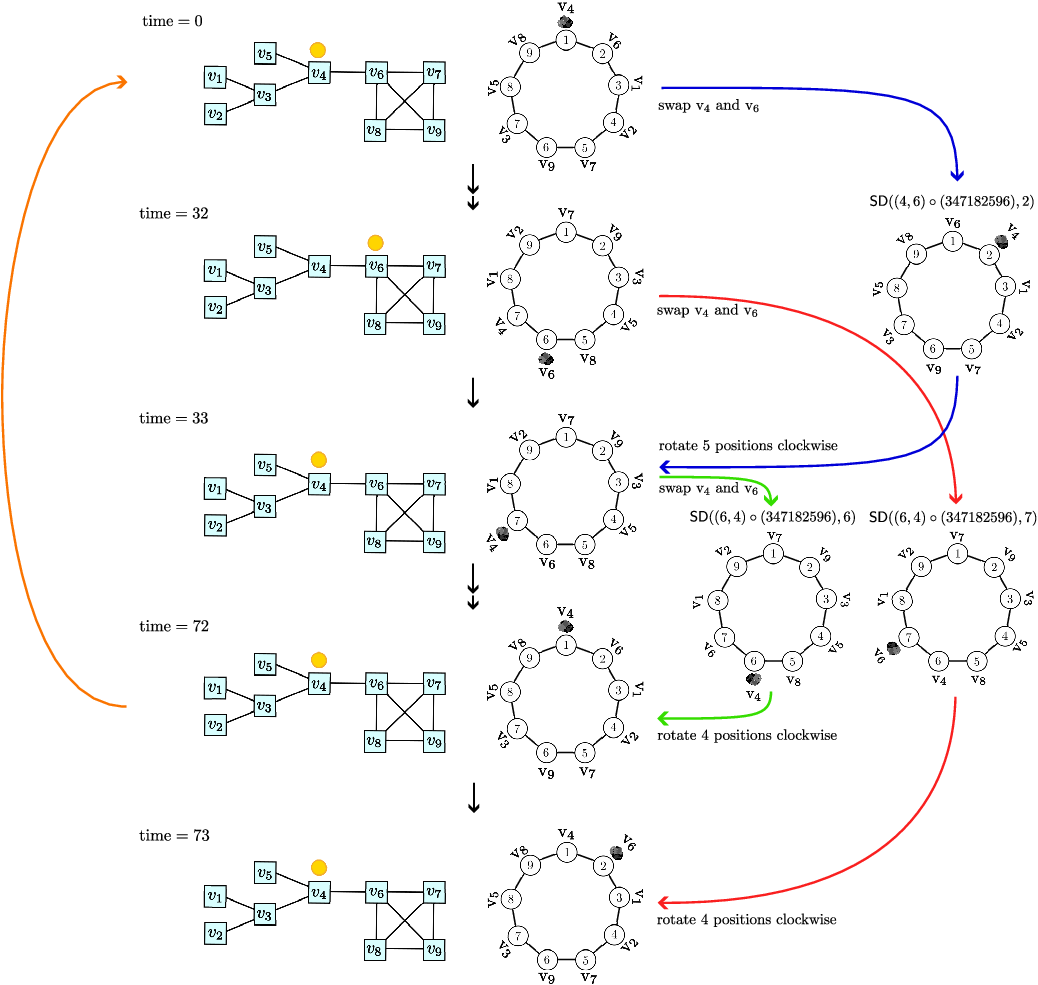}
\caption{We show the orbit of toric promotion on the bridge sum of a tree on $5$ vertices and the complete graph on $4$ vertices. This orbit has length $72=9\cdot8$ as expected. The blue arrow shows how $\SD_{33}$ is obtained from $\SD_{0}$, employing \Cref{LEM:CompleteBridgeSimple}. The red arrow shows how $\SD_{73}$ is obtained from $\SD_{32}$, employing \Cref{LEM:TreeBridgeSimple}. The green arrow shows how $\SD_{72}$ is obtained from $\SD_{33}$. The orange arrow shows that $\SD_{0}=\SD_{72}$.}
\label{Fig:TreeBridgeCompleteExample}
\end{figure}

\begin{proof}
Let $t$ be a time when the coin crosses $\mathcal{B}$ from $v_1$ to $v_2$. By \Cref{LEM:CompleteBridgeSimple}, the first time after $t$ when the coin crosses $\mathcal{B}$ from $v_2$ to $v_1$ is $t+m(N-1)$. Additionally, $\SD_{t+m(N-1)+1}$ is a cyclic rotation of $\SD((\sigma_t(v_1),\sigma_t(v_2))\circ \sigma_t,i_t+1)$ by $n$ positions clockwise.

Since the coin crosses $\mathcal{B}$ from $v_2$ to $v_1$ at time $t+m(N-1)$, by \Cref{LEM:TreeBridgeSimple}, the first time after $t+m(N-1)$ when the coin moves across $\mathcal{B}$ from $v_1$ to $v_2$ is 
\begin{align*}
t+n(N-1)+m(N-1)=t+N(N-1).
\end{align*}
Additionally, $\SD_{t+N(N-1)+1}$ is a cyclic rotation of $$\SD((\sigma_{t+N(N-1)}(v_1),\sigma_{t+N(N-1)}(v_2))\circ \sigma_{t+N(N-1)}, i_{t+N(N-1)}+1)$$ by $m$ positions clockwise. 

The coin moves at time $t+m(N-1)$. So, the cyclic order of replicas in $\SD_{t+m(N-1)}$ is the same as the cyclic order of replicas in $\SD_{t+m(N-1)+1}$. Additionally, the coin sits on $v_1$ at time $t+m(N-1)+1$. Moreover, the coin moves at time $t+N(N-1)$. So, the cyclic order of replicas in $\SD_{t+N(N-1)}$ is the same as the cyclic order of replicas in $\SD_{t+N(N-1)+1}$. Additionally, the coin sits on $v_1$ at time $t+N(N-1)$. Therefore, $\SD_{t+N(N-1)}$ is a cyclic rotation of $$\SD((\sigma_{t+m(N-1)+1}(v_1),\sigma_{t+m(N-1)+1}(v_2))\circ \sigma_{t+m(N-1)+1}, i_{t+m(N-1)+1}-1)$$ by $m$ positions clockwise. It follows that $\SD_{t+N(N-1)}$ is a cyclic rotation of $\SD_t$ by $N$ positions clockwise. So, $\SD_{t+N(N-1)}=\SD_t$. 
\end{proof}

\subsection{Orbit Lengths of Toric Promotion on Certain Corona Products}

Consider $H=K \odot T(v')$, where $K$ is the complete graph on $n$ vertices, $T$ is any tree on $m$ vertices, and $v'$ is any vertex of $T$. It is clear that $H$ has $nm+n$ vertices. We will denote $nm+n$ by $N$. In this subsection, we prove \Cref{THM:Corona}; that is, we show that all orbits of toric promotion on $H$ have length $N(N-1)$. 

Let the vertex set of $K$ be $V_K=\{k_1,\ldots,k_n\}$. The graph $H=K \odot T(v')$ is constructed by taking the bridge sum $K(k_i) \Bridge T(v')$ for all $k_i\in V_K$. Let the copy of $T$ that is bridged with vertex $k_i$ of $V_K$ be denoted $T^{(i)}$. Let the vertex set of $T^{(i)}$ be $V_{T^{(i)}}=\{p^{(i)}_1,\ldots,p^{(i)}_m\}$. Furthermore, let the bridge connecting $K$ and $T^{(i)}$ be the edge $\{p^{(i)}_1,k_i\}$. We denote the induced subgraph on $\{p^{(i)}_1,k_i\}$ with $\mathcal{B}^{(i)}$. Finally, we denote the induced subgraph $V_K$ with $\mathcal{K}$ and the induced subgraph on $V_{T^{(i)}}$ with $\mathcal{T}^{(i)}$.

Without loss of generality, let $t$ be a time when the coin crosses $\mathcal{B}^{(1)}$ from $k_1$ to $p_1^{(1)}$. In the following proof, we first use \Cref{LEM:TreeBridgeSimple} to show that $t+m(N-1)$ is the first time after $t$ at which the coin crosses $\mathcal{B}^{(1)}$ from $p_1^{(1)}$ to $k_1$. Let $t'$ be the first time after $t$ at which the coin crosses $\mathcal{B}^{(1)}$ from $k_1$ to $p_1^{(1)}$. We show that in the interval $[t+m(N-1),t']$ the replicas $\mathbf{v}_{p_1^{(1)}}$ and $\mathbf{v}_{k_1}$ need to move a relative distance of $N-2$ spaces towards each other. Specifically, we show that $t'$ is exactly the time when the clockwise distance between $\mathbf{v}_{p_1^{(1)}}$ and $\mathbf{v}_{k_1}$ has changed from $N-2$ spaces to $0$ spaces. We adapt the lap-based argument used in the proof of \Cref{LEM:CompleteBridgeSimple} to show that $t'=t+N(N-1)$. Finally, we show that $\SD_t=\SD_{t+N(N-1)}$. The reader may wish to refer to \Cref{fig:corona} throughout the following argument. 

\begin{figure}[htbp]
    \centering
    \includegraphics[width=.96\linewidth]{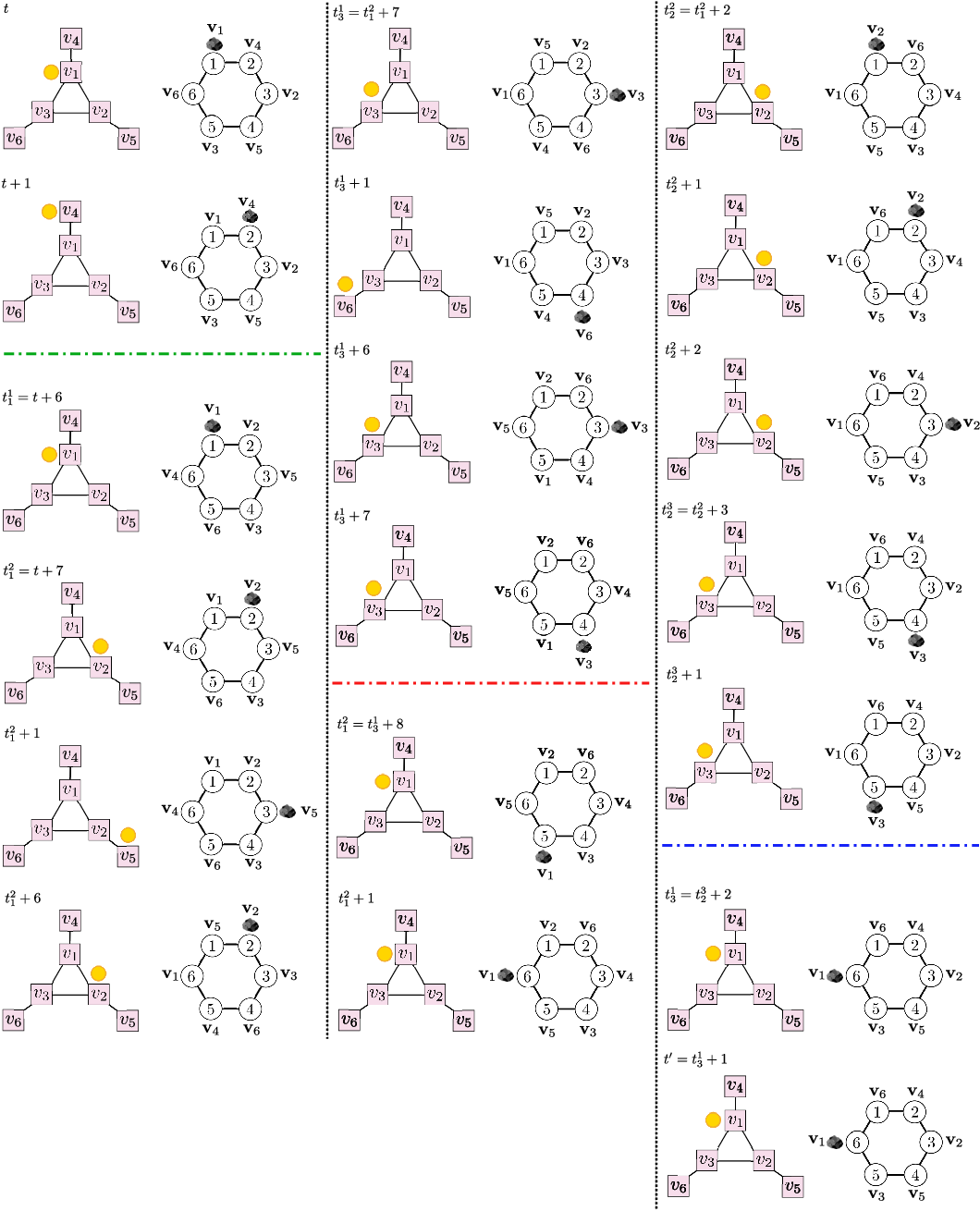}
    \caption{We show an example of an orbit of toric promotion on the corona product of the complete graph on $3$ vertices and the tree on $1$ vertex. The upper left image shows time $t$. The green horizontal line after $t+1$ marks the start of lap $1$. The red horizontal line after $t_3^{(1)}+7$ marks the start of lap $2$. The blue horizontal line after $t^{(3)}_2+1$ marks the end of lap $2$. In this example $k_{\alpha(1)}=v_1$, $k_{\alpha(2)}=v_2$, $k_{\alpha(3)}=v_3$, $A_1=\emptyset$, $A_2=\vv_5$, and $A_3=\vv_4,\vv_6$.}
  \label{fig:corona}
\end{figure}

\begin{proof}
Let $t$ be a time when the coin moves from $k_1$ to $p_1^{(1)}$, and let $t'$ be the first time after $t$ at which the coin moves from $k_1$ to $p_1^{(1)}$. We want to show that all orbits of toric promotion on $H$ have size $N(N-1)$. We will do this by first showing that $t'=t+N(N-1)$. Then,  we will show that $\SD_t=\SD_{t+N(N-1)}$. 

By \Cref{LEM:TreeBridgeSimple}, the first time after $t$ at which the coin moves from $p_1^{(1)}$ to $k_1$ is $t+m(N-1)$. Additionally, $\SD_{t+m(N-1)+1}$ is a cyclic rotation of $\SD((\sigma_t(k_1),\sigma_t(p_1^{(1)}))\circ \sigma_t, i_t+1)$ by $N-m$ spaces clockwise. 

At time $t+m(N-1)+1$, the replica $\mathbf{v}_{k_1}$ sits in the stone and $\mathbf{v}_{p_1^{(1)}}$ sits one position counterclockwise of $\mathbf{v}_{k_1}$. At time $t'$, the replica $\mathbf{v}_{k_1}$ sits on the stone and $\mathbf{v}_{p_1^{(1)}}$ sits one position clockwise of $\mathbf{v}_{k_1}$. Since $k_1$ and $p_1^{(1)}$ are adjacent, the replicas $\mathbf{v}_{k_1}$ and $\mathbf{v}_{p_1^{(1)}}$ cannot move through each other. So, in the interval $[t+m(N-1)+1 ,t']$ the replicas $\mathbf{v}_{k_1}$ and $\mathbf{v}_{p_1^{(1)}}$ must move a relative distance of $N-2$ spaces towards each other. In other words, during the interval $[t+m(N-1)+1 ,t']$, the sum of the movement of $\mathbf{v}_{p_1^{(1)}}$ in the clockwise direction and $\mathbf{v}_{k_1}$ in the counterclockwise direction must total to $N-2$ spaces.

Each time the coin exits $\mathcal{K}$, the coin must cross some bridge $\mathcal{B}^{(i)}$. By \Cref{LEM:TreeBridgeSimple}, if the coin crosses a bridge $\mathcal{B}^{(i)}$ from $k_i$ to $p_1^{(i)}$ at a time $\tau$, the first time after $\tau$ at which the coin crosses $\mathcal{B}^{(i)}$ from $p_1^{(i)}$ to $k_i$ is $\tau+m(N-1)$, and the cyclic order of replicas in $\SD_{\tau+m(N-1)+1}$ is the same as the cyclic order of replicas in $\SD_{\tau}$, except the replicas $\mathbf{v}_{p_1^{(i)}}$ and $\mathbf{v}_{k_i}$ have switched places. By assumption, in the interval $[t+m(N-1)+1, t']$ the coin does not cross $\mathcal{B}^{(1)}$. Hence, the coin entering and exiting $\mathcal{K}$ during $[t+m(N-1)+1, t']$ does not change the relative positions of $\mathbf{v}_{p_1^{(1)}}$ and $\mathbf{v}_{k_1}$. So we can conclude that the relative distance between $\mathbf{v}_{p_1^{(1)}}$ and $\mathbf{v}_{k_1}$ can only change if the coin is in $\mathcal{K}$. 

By \Cref{LEM:CompleteCWvsCCW}, if the coin is in $\mathcal{K}$, replicas of vertices in $V_K$ can only move clockwise or remain in place, and replicas of vertices not in $V_K$ can only move counterclockwise or remain in place. Therefore, in the interval $[t+m(N-1)+1, t']$, the replica $\mathbf{v}_{k_1}$ can only move clockwise or remain in place. The replica $\mathbf{v}_{k_1}$ can only move if the stone is on $\mathbf{v}_{k_1}$ and a replica of a vertex not adjacent to $k_1$ sits one position clockwise of $\mathbf{v}_{k_1}$. There are exactly $nm-1$ vertices of this form. Hence, in the interval $[t+m(N-1)+1, t']$, the replica $\mathbf{v}_{k_1}$ can move a maximum of $nm-1$ spaces clockwise. Moreover, in the interval $[t+m(N-1)+1, t']$, the replica $\mathbf{v}_{p_1^{(1)}}$ can only move counterclockwise or remain in place. The replica $\mathbf{v}_{p_1^{(1)}}$ can only move if a replica of a vertex in $V_K$, which is not adjacent to $p_1^{(1)}$, sits on the stone and is one position counterclockwise of $\mathbf{v}_{p_1^{(1)}}$. There are exactly $n-1$ vertices of this form. So, $\mathbf{v}_{p_1^{(1)}}$ can move a maximum of $n-1$ spaces counterclockwise. Notice that $(n-1)+(nm-1)=N-2$. It follows that $t'$ is the time when $\mathbf{v}_{k_1}$ has moved through $nm-1$ other replicas and $\mathbf{v}_{p_1^{(1)}}$ has been moved through by $n-1$ other replicas. Since the stone sits on $\mathbf{v}_{k_1}$ at time $t'$, we can conclude that $t'$ is exactly the time when $\mathbf{v}_{k_1}$ has moved through $nm-1$ other replicas. 

In the remainder of this proof, we adapt the lap-based argument used in \Cref{LEM:CompleteBridgeSimple}. Let $\vv_{k_\alpha(1)}, \ldots, \vv_{k_\alpha(n)}$ be the replicas of vertices in $V_k$, indexed so that they appear in clockwise cyclic order in $\SD_{t+m(N-1)+1}$, and so that $\mathbf{v}_{k_{\alpha(1)}}=\mathbf{v}_{k_1}$. Let $t_\ell^{(1)}$ be the $\ell^{th}$ time after $t$ when the replica $\mathbf{v}_{k_{\alpha(1)}}$ receives the stone. So, $t+m(N-1)+1=t_1^{(1)}$. An interval of the form $[t_\ell^{(1)},t_{\ell+1}^{(1)}-1]$ is a called a \textit{lap}; specifically, the interval $[t_\ell^{(1)},t_{\ell+1}^{(1)}-1]$ is called lap $\ell$. Furthermore, let $t_\ell^{(i)}$ for $2\leq i \leq n$ be the first time after $t_\ell^{(1)}$ when the replica $\mathbf{v}_{k_{\alpha(i)}}$ receives the stone. In other words, $t_\ell^{(i)}$ is the first time during lap $\ell$ when $\mathbf{v}_{k_{\alpha(i)}}$ receives the stone. Moreover, let the replicas on the clockwise cyclic path from $\mathbf{v}_{k_{\alpha(n)}}$ to $\mathbf{v}_{k_{\alpha(1)}}$ (not including $\mathbf{v}_{k_{\alpha(n)}}$ and $\mathbf{v}_{k_{\alpha(1)}}$) in $\SD_{t_1^{(1)}}$ be the set $A_n$. For $1\leq i \leq n-1$, let the replicas on the clockwise cyclic path from $\mathbf{v}_{k_{\alpha(i)}}$ to $\mathbf{v}_{k_{\alpha(i+1)}}$ in $\SD_{t_1^{(1)}}$ be the set $A_i$. Then the clockwise cyclic order of replicas in $\SD_{t_1^{(1)}}$, starting from $\mathbf{v}_{k_{\alpha(1)}}$ is given by
$$\mathbf{v}_{k_{\alpha(1)}}, A_1, \mathbf{v}_{k_{\alpha(2)}}, A_2, \ldots,  \mathbf{v}_{k_{\alpha(n)}}, A_n.$$
Finally, let $p_1^{(\alpha(i))}$ be the vertex in $\mathcal{T}^{(\alpha(i))}$ that is adjacent to $k_{\alpha(i)}$.

Let us start by evaluating how the stone diagram evolves during lap $1$. At time $t_1^{(1)}$, the replica $\mathbf{v}_{k_{\alpha(1)}}$ sits on the stone and starts sliding through the replicas in $A_1$. At time $t_1^{(1)}+|A_1|$, the stone slides from under $\mathbf{v}_{k_{\alpha(1)}}$ to under $\mathbf{v}_{k_{\alpha(2)}}$. So, $t_1^{(2)}=t_1^{(1)}+|A_1|+1$. At time $t_1^{(2)}$, the stone sits on $\mathbf{v}_{k_{\alpha(2)}}$ and starts sliding through the replicas in $A_2$. We have two cases:
\begin{enumerate}
\item the replica $\mathbf{v}_{p_1^{(\alpha(2))}}$ is in $A_2$;
\item or, the replica $\mathbf{v}_{p_1^{(\alpha(2))}}$ is not in $A_2$.
\end{enumerate}
Let's start with case 1; that is the replica $\mathbf{v}_{p_1^{(\alpha(2))}}$ is in $A_2$. At time $t_1^{(2)}$, the stone sits on $\mathbf{v}_{k_{\alpha(2)}}$ and starts sliding through the replicas in $A_2$, until some time when the replica $\mathbf{v}_{p_1^{(\alpha(2))}}$ sits one space clockwise of $\mathbf{v}_{k_{\alpha(2)}}$. We will call this time $t^*$. At time $t^*$, the stone slides from under $\mathbf{v}_{k_{\alpha(2)}}$ to under $\mathbf{v}_{p_1^{(\alpha(2))}}$. By \Cref{LEM:TreeBridgeSimple}, the next time after $t^*$ when the stone slides from under $\mathbf{v}_{p_1^{(\alpha(2))}}$ to under $\mathbf{v}_{k_{\alpha(2)}}$ is $t^*+m(N-1)$. Additionally, $\SD_{t^*+m(N-1)+1}$ is cyclic rotation of $$\SD((\sigma_{t^*}(k_{\alpha(2)}),\sigma_{t^*}(p_1^{\alpha(2)}))\circ \sigma_{t^*}, i_{t^*}+1)$$ by $N-m$ spaces clockwise. At time $t^*+m(N-1)+1$, the stone sits on $\mathbf{v}_{k_{\alpha(2)}}$ and starts sliding through the remaining replicas in $A_2$. At time $t_1^{(2)}+m(N-1)+|A_2|$, the stone slides from under $\mathbf{v}_{k_{\alpha(2)}}$ to under $\mathbf{v}_{k_{\alpha(3)}}$. It follows that $t_1^{(3)}=t_1^{(2)}+m(N-1)+|A_2|+1$. This concludes case $1$.

Now let's examine case two; that is, the replica $\mathbf{v}_{p_1^{(\alpha(2))}}$ is not in $A_2$. At time $t_1^{(2)}$, the replica $\mathbf{v}_{k_{\alpha(2)}}$ sits on the stone and starts sliding through the replicas in $A_2$. At time $t_1^{(2)}+|A_2|$, the stone slides from under $\mathbf{v}_{k_{\alpha(2)}}$ to under $\mathbf{v}_{k_{\alpha(3)}}$. So, $t_1^{(3)}=t_1^{(2)}+|A_2|+1$. This concludes case two.

It follows that for $2\leq i \leq n-1$ if $\mathbf{v}_{p_1^{\alpha(i)}}$ is in $A_i$ then $t_1^{(i+1)}=t_1^{(i)}+m(N-1)+|A_i|+1$; conversely, if $\mathbf{v}_{p_1^{(\alpha(i))}}$ is not in $A_i$, then $t_1^{(i+1)}=t_1^{(i)}+|A_i|+1$. Now consider $i=n$; at time $t_1^{(n)}$, the replica $\mathbf{v}_{\alpha(0)}$ sits on the stone and starts sliding through the replicas in $A_n \cup \textbf{v}_{p_1^{(\alpha(1))}} 
\cup A_1$. If $\mathbf{v}_{p_1^{(\alpha(n))}}$ is in $A_n \cup A_1$, it follows that $t_2^{(1)}=t_1^{(n)}+m(N-1)+|A_n \cup \textbf{v}_{p_1^{(\alpha(1))}} 
\cup A_1|+1$. Conversely, if $\mathbf{v}_{p_1^{(\alpha(n))}}$ is not in $A_n \cup A_1$, then $t_2^{(1)}=t_1^{(n)}+|A_n \cup A_0|+1$. 

In order for $\mathbf{v}_{k_{\alpha(1)}}=\mathbf{v}_{k_1}$ to move $nm-1$ positions clockwise (as required), each $\mathbf{v}_{k_{\alpha(i)}}$ for $2\leq i \leq n$ must move through all $A_i$. Since each $\mathbf{v}_{p_1^{(\alpha(i))}}$ for $2\leq i \leq n$ is in some set $A_i$, we can assume without loss of generality that for $2\leq i \leq n$ the replica $\mathbf{v}_{p_1^{(\alpha(i))}}$ is in the set $A_i$. In other words, we can assume that the coin travels on each $\mathcal{T}^{(i)}$, for $2\leq i \leq n$, during lap $1$.

 Then, for $2\leq i \leq n-1$, we have that  $$t_1^{(i+1)}=t_1^{(i)}+m(N-1)+|A_i|+1.$$ Additionally, $$t_2^{(1)}=t_1^{(n)}+m(N-1)+|A_n\cup A_1|+1.$$ Recall that $t_1^{(2)}=t_1^{(1)}+|A_1|+1$. We can now solve for $t_2^{(1)}$. Observe,
\begin{align*}
   t_2^{(1)}&=t_1^{(n)}+m(N-1)+|A_n\cup A_1|+1\\
   &=t_1^{(n-1)}+m(N-1)+|A_{n-1}|+1)+m(N-1)+|A_n\cup A_1|+1\\
   &=t_1^{(1)}+ \sum_{i=1}^{n} |A_i|+|A_1|+(n-1)m(N-1)+n\\
   &=t_1^{(1)}+ (N-n)+|A_1|+(n-1)m(N-1)+n\\
   &=t_1^{(1)}+N+|A_1|+(n-1)(m(N-1)).\\
\end{align*}
We have found that $$t_2^{(1)}=t_1^{(1)}+N+|A_1|+(n-1)(m(N-1)).$$

It follows from this analysis that the clockwise cyclic order of replicas in $\SD_{t_2^{(1)}}$, starting with $\mathbf{v}_{k_{\alpha(1)}}=\mathbf{v}_{k_1}$, is given by
$$\mathbf{v}_{k_{\alpha(1)}}, A_2, \textbf{v}_{k_{\alpha(2)}}, \ldots, A_n, \textbf{v}_{k_{\alpha(n)}}, A_1.$$

We will now evaluate how the stone diagram evolves during lap $2$. At time $t_2^{(1)}$, the replica $\mathbf{v}_{k_{\alpha(1)}}$ sits on the stone and starts moving through the replicas in $A_2$. At time $t_2^{(1)}+|A_2|$ the stone slides from under $\mathbf{v}_{k_{\alpha(1)}}$ to under $\mathbf{v}_{k_{\alpha(2)}}$. So, $t_2^{(2)}=t_2^{(1)}+|A_2|+1$. In general, for $2\leq i \leq n$, we have that $t_2^{(i)}=t_2^{(i-1)}+|A_i|+1$. At time $t_2^{(n)}$, the replica $\mathbf{v}_{k_{\alpha(n)}}$ sits on the stone and slides through the replica in $A_{1} \cup A_{2}$. At time $t_2^{(n)}+|A_{1} \cup A_{2}|$ the stone slides from under $\mathbf{v}_{k_{\alpha(n)}}$ to under $\mathbf{v}_{k_{\alpha(1)}}$. Therefore $t_3^{(1)}=t_2^{(n)}+|A_{1} \cup A_{2}|+1$. We can solve for $t_3^{(1)}$:
\begin{align*}
    t_3^{(1)}&=t_2^{(3)}+|A_1\cup A_2|+1\\
    &=t_2^{(1)}+\sum_{i=1}^n{|A_i|}+|A_2|+n\\
    &=t_2^{(1)}+(N-n)+|A_2|+n\\
    &=t_2^{(1)}+N+|A_2|.
\end{align*}
Since $p_1^{(\alpha(i))}$ is in $A_i$ for $2\leq i \leq n$ and $p_1^{(\alpha(1))}$ is in $A_n$, during lap $2$ the coin doesn't travel across any $\mathcal{B}^{(i)}$. In general, during lap $\ell$ for $2\leq \ell \leq n-1$ the replica $\mathbf{v}_{k_{\alpha(i)}}$ for $1\leq i \leq n-1$ will move though the set $A_{i+\ell-1}$. Additionally, the replica $\mathbf{v}_{k_{\alpha(n)}}$ will move through $A_{n+\ell-1} \cup A_{\ell}$. It follows that for $2\leq \ell \leq n$, we have that $t_\ell^{(1)}=t_{\ell-1}^{(1)}+N+|A_{\ell-1}|$. Recall that $t_2^{(1)}=t_1^{(1)}+N+|A_1|+(n-1)(m(N-1))$. We can now solve for $t_n^{(1)}$. Observe,
\begin{align*}
t_n^{(1)}&=t_{n-1}+N+|A_{n-1}|\\
&=t_{n-2}+N+|A_{n-2}|++N+|A_{n-1}|\\
&=t_2^{(1)}+(n-2)N+\sum_{i=2}^{n-1}|A_i|\\
&=t_1^{(1)}+(n-1)N+\sum_{i=1}^{n-1}|A_i|+(n-1)(m(N-1)).\\   
\end{align*}

At time $t_n^{(1)}$, the replica $\mathbf{v}_{k_{\alpha(1)}}$ sits on the stone and starts moving through the replicas in $A_n$ until the replica $p_1^{(\alpha(1))}$ sits one position clockwise of $\mathbf{v}_{k_{\alpha(1)}}$. At time $t_n^{(1)}+|A_n\setminus p_1^{(\alpha(1))}|$ the stone slides from under $\mathbf{v}_{k_{\alpha(1)}}$ to under $p_1^{(\alpha(1))}$. Therefore $t'=t_n^{(1)}+|A_n\setminus p_1^{(\alpha(1))}|$. Recall that $t_1^{(1)}=t+m(N-1)+1$. We can now solve for $t'$. We have
\begin{align*}
    t'&=t_n^{(1)}+|A_n\setminus p_1^{(\alpha(1))}|\\
    &=t_1^{(1)}+(n-1)N+(\sum_{i=1}^{n-1}|A_i|)+(n-1)(m(N-1))+|A_n\setminus p_1^{(\alpha(1))}|\\
    &=t_1^{(1)}+(n-1)N+(\sum_{i=1}^{n}|A_i|)-1+(n-1)(m(N-1))\\
    &=t_1^{(1)}+(n-1)N+(N-n)-1+(n-1)(m(N-1))\\
    &=t+m(N-1)+1+(n-1)N+(N-n)-1+(n-1)(m(N-1))\\
    &=t+m(N-1)+(n-1)N+(N-n)+(n-1)(m(N-1))\\
    &=t+(n-1)N+(N-n)+n(m(N-1))\\
    &=t+(n-1)+(n-1)(N-1)+(N-n)+n(m(N-1))\\
    &=t+n(N-1)+nm+(N-1)\\
    &=t+N(N-1).
\end{align*}
We have now shown that $t'=t+N(N-1)$ as desired. 

We will now show that $\SD_t=\SD_{t+N(N-1)}$ by analyzing the clockwise and counterclockwise movement of each replica in the interval $[t,t']$. First, we will divide the set of replicas into three classes:
\begin{enumerate}
    \item $\mathbf{v}_{p_j^{(i)}}$ for $2\leq j \leq m$ and $1\leq i \leq n$,
    \item $ \textbf{v}_{k_i}$ for $1 \leq i \leq n$,
    \item and $\mathbf{v}_{p_1^{(i)}}$ for $1 \leq i \leq n$.
\end{enumerate}
We will start with class $1$, namely replicas of the form $\mathbf{v}_{p_j^{(i)}}$ for $ 2 \leq j \leq m$ and $ 1 \leq i \leq n$. By \Cref{LEM:TreeBridgeSimple}, each time the coin exits and enters $\mathcal{K}$, the replica $\mathbf{v}_{p_j^{(i)}}$ moves counterclockwise $m$ spaces. This happens $n$ times. So,  $\mathbf{v}_{p_j^{(i)}}$ moves $nm$ spaces counterclockwise when the coin in not in $\mathcal{K}$. It's clear from our lap-based analysis that when the coin is on $\mathcal{K}$, the replica $\mathbf{v}_{p_j^{(i)}}$ moves $n$ spaces counterclockwise. Thus, in the interval $[t,t']$ each $\mathbf{v}_{p_j^{(i)}}$ for $2\leq j \leq m$ and $1\leq i \leq n$ moves a total of $nm+n$ spaces counterclockwise. Since $nm+n=N$, each replica in class $1$ is in the same position in both $\SD_t$ and $\SD_{t'}$. 

Now let's consider class $2$, namely replicas of the form $\mathbf{v}_{k_i}$ for $1 \leq i \leq n$. Each time the coin travels from $k_i$ to $p_1^{(i)}$ and then back from $p_1^{(i)}$ to $k_i$, the replica $\mathbf{v}_{k_i}$ moves a total of $m-1$ spaces counterclockwise. We showed that this happens once in the interval $[t,t']$. All other times, when the coin exits and enters $\mathcal{K}$, the replica $\mathbf{v}_{k_i}$ moves $m$ spaces counterclockwise. We showed that in the interval $[t,t']$ this happens $n-1$ times. Therefore, when the coin is not in $\mathcal{K}$, the replica $\mathbf{v}_{k_i}$ moves a total of
\begin{align*}
    m(n-1)+m-1=mn-1
\end{align*}
spaces counterclockwise. 

It follows directly from our lap-based analysis that when the coin is in $\mathcal{K}$, the replica $\mathbf{v}_{k_i}$ moves $nm-1$ spaces clockwise. Therefore, during the interval $[t,t']$, each $\mathbf{v}_{k_i}$ moves a total of $0$ spaces. So, each replica in class $2$ is in the same position in both $\SD_{t}$ and $\SD_{t'}$. 

Finally, consider replicas in class $3$, namely replicas of the form $\mathbf{v}_{p_1^{(i)}}$ for $1 \leq i \leq n$. Each time the coin travels from $k_i$ to $p_1^{(i)}$ and then back from $p_1^{(i)}$ to $k_i$, the replica $\mathbf{v}_{p_1^{(i)}}$ moves a total of $m+1$ spaces counterclockwise. We showed that in the interval $[t,t']$ this happens once. All other times, when the coin exits and enters $\mathcal{K}$, the replica $\mathbf{v}_{p_1^{(i)}}$ moves $m$ spaces counterclockwise. We showed that in the interval $[t,t']$ this happens $n-1$ times. Hence, when the coin is not in $\mathcal{K}$ the replica $\mathbf{v}_{p_1^{(i)}}$ moves a total of
\begin{align*}
    m(n-1)+m+1=mn+1
\end{align*}
spaces counterclockwise. 

It follows directly from our lap-based analysis that when the coin is in $\mathcal{K}$, the replica $\mathbf{v}_{p_1^{(i)}}$ moves $n-1$ spaces counterclockwise. Therefore, during the interval $[t,t']$ each $\mathbf{v}_{p_1^{(i)}}$ moves a net amount of $0$ spaces. So, each replica in class $3$ is in the same position in both $\SD_{t}$ and $\SD_{t'}$.

Since all replicas are in class $1,2$ or $3$, it is shown that all replicas are in the same positions in $\SD_t$ and $\SD_{t+N(N-1)}$. Furthermore, the stone sits on $\mathbf{v}_{k_1}$ in both $\SD_t$ and $\SD_{t+N(N-1)}$. Hence, we can conclude that $\SD_t=\SD_{t+N(N-1)}$. This completes the proof of \Cref{THM:Corona}.
\end{proof}

\section{Future Work}
\label{Sec:FutureWorks}
In this section, we discuss two directions for future work. The first is describing the behavior of toric promotion on iterative bridge sums. The second is describing the behavior of toric promotion on bridge sums of families of graphs for which the orbit length of toric promotion depends on the initial labeling $\sigma_0$.

\subsection{Iterative Bridge Sums}

The orbit length of toric promotion on trees and complete graphs individually does not depend on the initial labeling. Additionally, we showed that the orbit length of a single bridge sum of any combination of trees and complete graphs does not depend on the initial labeling. We conjecture that the same is true for any finite number of bridge sums of trees and complete graphs. Since any finite number of bridge sums of trees will result in a tree, this case is shown in \Cref{THM: Tree} \cite{D2023}.

Consider the graph $G$ constructed by taking the bridge sum of $q$ graphs $$G_1=(V_1,E_1),\ldots,G_q=(V_q,E_q)$$ on $n_1,\ldots,n_q$ vertices respectively, such that the graph $G_1$ is bridge summed with the graph $G_2$, the graph $G_2$ is bridge summed with the graph $G_3$, and so on up to the graph $G_q$ at arbitrary vertices, where each $G_i$ is either a complete graph or a tree. Let the vertex set of $G$ which is $V_1 \cup \cdots \cup V_q$ be denoted $V'$, and let $\lvert V' \rvert$ be $N$.

\begin{conjecture}
     For all such $G$ the orbit length of toric promotion on $G$ is given by $N(N-1)$.
\label{CONJ:InteriveTreeandCompelete}
\end{conjecture}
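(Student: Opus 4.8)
The plan is to isolate a single generalized lemma and then deduce \Cref{CONJ:InteriveTreeandCompelete} from it exactly as \Cref{THM:completebridgecomplete} and \Cref{THM:TreeBridgeComplete} were deduced from \Cref{LEM:CompleteBridgeSimple} and \Cref{LEM:TreeBridgeSimple}. Call the lemma the \emph{chain-bridge lemma}: let $H=G_1\Bridge\cdots\Bridge G_p$ be a chain of blocks (each a tree or a complete graph) attached to a connected simple graph $S$ by a single bridge $\mathcal{B}=\{a,b\}$, where $a$ is a vertex of the end block $G_1$ and $b\in S$; write $N$ for the number of vertices of the whole graph and $h=\lvert V(H)\rvert$. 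Then, whenever the coin crosses $\mathcal{B}$ from $b$ to $a$ at a time $t$, the first later time at which it crosses from $a$ to $b$ is $t+h(N-1)$, and $\SD_{t+h(N-1)+1}$ is a cyclic rotation of $\SD((\sigma_t(a),\sigma_t(b))\circ\sigma_t,i_t+1)$ by $N-h$ positions clockwise. This is precisely \Cref{LEM:TreeBridgeSimple} and \Cref{LEM:CompleteBridgeSimple} with the single block replaced by an arbitrary chain, and it is the only new ingredient required.

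I would prove the chain-bridge lemma by induction on $p$. The base case $p=1$ is exactly \Cref{LEM:TreeBridgeSimple} (when $G_1$ is a tree) or \Cref{LEM:CompleteBridgeSimple} (when $G_1$ is complete). For the inductive step, write $H=G_1\Bridge H'$ with $H'=G_2\Bridge\cdots\Bridge G_p$, and let $\{c,d\}$ be the bridge joining $G_1$ to $H'$, with $c\in V(G_1)$ and $d\in V(G_2)$. Regarding $H'$ as a chain of $p-1$ blocks attached to the connected simple graph $S'=S\cup V(G_1)$ at $\{c,d\}$, the inductive hypothesis says that each excursion of the coin into $H'$ takes $h'(N-1)$ steps, where $h'=h-\lvert V(G_1)\rvert$, and returns the stone diagram up to the swap of $\vv_c,\vv_d$ and a cyclic rotation. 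The key point is that this excursion behavior is formally identical to the single-block excursions already spliced into the proofs of \Cref{LEM:TreeBridgeSimple} and \Cref{LEM:CompleteBridgeSimple}. Thus I would re-run those proofs for the outermost block $G_1$, inserting one $H'$-excursion at the moment the coin would cross $\{c,d\}$: if $G_1$ is a tree, I mimic the depth-first traversal of \Cref{LEM:TreeBridgeSimple}, treating $H'$ as a pendant ``subtree'' of effective size $h'$ hanging at $c$; if $G_1$ is complete, I mimic the lap argument of \Cref{LEM:CompleteBridgeSimple} and \Cref{THM:Corona}, which already shows how to splice a sub-excursion into a lap, here with only a single attachment at $c$. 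In either case the accounting yields total excursion length $\lvert V(G_1)\rvert(N-1)+h'(N-1)=h(N-1)$, and composing the swap-plus-rotation of the inner $H'$-excursion with that produced by $G_1$ gives the claimed rotation by $N-h$.

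Granting the chain-bridge lemma, I deduce the conjecture directly, with no further induction. Let $\mathcal{B}_1=\{a_1,a_2\}$ be the bridge joining $G_1$ to $G_2$, with $a_1\in V_1$ and $a_2\in V_2$, and let $t$ be a time at which the coin crosses from $a_1$ to $a_2$. Applying the chain-bridge lemma to the chain $R=G_2\Bridge\cdots\Bridge G_q$ (attached to the connected simple graph $S=G_1$), the coin returns across $\mathcal{B}_1$ after $(N-n_1)(N-1)$ steps with the effect of swapping $\vv_{a_1},\vv_{a_2}$ and rotating by $n_1$. Applying the lemma again to the single block $G_1$ (attached to $R$), the next crossing from $a_1$ to $a_2$ occurs after a further $n_1(N-1)$ steps, at time $t+N(N-1)$, with a swap and a rotation by $N-n_1$. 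As in the proofs of \Cref{THM:completebridgecomplete} and \Cref{THM:TreeBridgeComplete}, the two swaps cancel and the two rotations compose to a full rotation by $N\equiv 0$, so $\SD_{t+N(N-1)}=\SD_t$ and the orbit length divides $N(N-1)$. Since the crossing of $\mathcal{B}_1$ from $a_1$ to $a_2$ is performed from the state at time $t$, and we have just shown the coin does not cross $\mathcal{B}_1$ in this direction again until time $t+N(N-1)$, the state cannot recur earlier, so the orbit length is exactly $N(N-1)$.

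The main obstacle is the inductive step of the chain-bridge lemma when $G_1$ is complete. The lap bookkeeping of \Cref{LEM:CompleteBridgeSimple} and \Cref{THM:Corona} is delicate, and I must verify that inserting a single $H'$-excursion, whose return is known only up to cyclic rotation rather than exactly, still produces clean lap lengths and a net replica displacement that assembles into a rotation by exactly $N-h$. Care is also needed for the position of the attachment vertex $c$ within $G_1$ (including the degenerate case $a=c$, where $S$ and $H'$ meet $G_1$ at the same vertex), for the sign and magnitude of the counterclockwise displacements supplied by \Cref{LEM:CompleteCWvsCCW}, and for confirming that the nested swap-plus-rotation operations compose correctly. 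By contrast, the tree case of $G_1$ is comparatively routine, since the depth-first recursion of \Cref{LEM:TreeBridgeSimple} already has exactly the shape needed to absorb $H'$ as one more branch.
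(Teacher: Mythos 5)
First, note that the statement you are proving is stated in the paper only as a conjecture (\Cref{CONJ:InteriveTreeandCompelete}); the paper offers no proof, so there is nothing to compare your argument against, and it must stand on its own. As a plan it is well conceived: the ``chain-bridge lemma'' you isolate is exactly the right common generalization of \Cref{LEM:TreeBridgeSimple} and \Cref{LEM:CompleteBridgeSimple} (your stated return time $t+h(N-1)$ and rotation by $N-h$ are the correct targets --- one can sanity-check, for instance, that with a chain $H'$ of size $h'$ hanging at $c\in V(G_1)$, the replica $\vv_a$ gains $\nu-1+h'$ clockwise steps from the laps and loses $h'$ during the single $H'$-excursion, netting the required $\nu-1=(N-h)-1$), and your deduction of the conjecture from it, cutting at the bridge between $G_1$ and $G_2$ and composing the two swap-plus-rotations, is sound and mirrors the proofs of \Cref{THM:completebridgecomplete} and \Cref{THM:TreeBridgeComplete}. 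Your closing observation that the state cannot recur before $t+N(N-1)$ because the coin would otherwise recross $\mathcal{B}_1$ too early is a correct way to pin down the exact orbit length.

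The gap is that the chain-bridge lemma --- which carries the entire content of the conjecture --- is asserted rather than proved. Your inductive step is a description of what a proof would do (``re-run those proofs \ldots inserting one $H'$-excursion''), and you yourself flag the complete-block case as unresolved. That case is genuinely the crux: \Cref{LEM:CompleteBridgeSimple} relies on the coin never leaving $\mathcal{K}$ during the interval $[t+1,t']$, so that \Cref{LEM:CompleteCWvsCCW} applies throughout and forces each replica's direction of travel; once an $H'$-excursion is spliced in, replicas of $V(G_1)$ move counterclockwise during that excursion, and you must redo the extremal counting argument (``$\vv_a$ can move at most so many spaces, $\vv_b$ at most so many, and they must sum to $N-2$'') with the excursion's net displacements folded in, verify that the stone faces $\vv_d$ while sitting on $\vv_c$ exactly once during the laps so that exactly one excursion occurs, and handle the degenerate attachment $a=c$ separately (there $a$ \emph{is} adjacent to $d$, which changes the count of non-neighbors of $a$ by one and hence the lap bookkeeping). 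The tree-block inductive step is also not free: the induction inside \Cref{LEM:TreeBridgeSimple} recurses on pendant \emph{subtrees}, so absorbing $H'$ as ``one more branch'' requires restating that induction so that its hypothesis is the chain-bridge lemma itself rather than the tree lemma. None of these steps looks insurmountable, and the corona-product proof (\Cref{THM:Corona}) is good evidence that the splicing can be made rigorous, but until the inductive step is actually carried out you have a credible strategy, not a proof.
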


Furthermore, this prompts the broader question of characterizing, for a graph $G$ with $N$ vertices, which graphs (or families of graphs) have the property that all orbits of toric promotion on $G$ have length $N(N-1)$.

\subsection{Cycles}

This manuscript describes the specific orbit length of toric promotion on bridge sums with complete graphs and trees, two families of graphs for which the orbit length of toric promotion is independent of the initial labeling. A natural next step is to build on Lemmas~\ref{LEM:TreeBridgeSimple} and \ref{LEM:CompleteBridgeSimple}, by describing the specific orbit length of toric promotion on the bridge sum of either a tree or a complete graph, with a simple graph for which the orbit length depends on the initial labeling. One such family of simple graphs is the family of cycle graphs. 

In \cite{ADS2024}, the authors describe the orbit length of toric promotion on cycle graphs. They show that it depends on a parameter $m_\sigma$, the winding number of the cycle graph educed by the labeling $\sigma_0$. We refer the reader to \cite{ADS2024} for more about toric promotion on cycle graphs and the parameter $m_\sigma$. Here, we will define an analogous parameter $w_\sigma$ on $T(v_i)\Bridge C(v_j)$ and $K(v_i)\Bridge C(v_j)$, where $C=(V_C,E_C)$ is the cycle graph on $m$ vertices, $K=(V_K,E_K)$ the complete graph on $n$ vertices, and $T=(V_T,E_T)$ is a tree on $n$ vertices. Intuitively, $w_\sigma$ is the winding number of the induced subgraph on $V_C$, which depends on the formal ordering of $V_C$ induced by $\sigma$. 

Let $t$ be a time when the coin moves from $v_j$ to $v_i$, and let $V'$ be the vertex set of $K$ or $T$, respectively. Let $\mathbf{v}_{\omega(0)}, \ldots, \mathbf{v}_{\omega(n)}$ be the replicas of the vertices in $V'$ indexed so that they appear in clockwise cyclic order in $\SD_t$ and so that $\mathbf{v}_{\omega(0)}=v_i$. Then, let $R_0$ be $1$ more than the number of replicas in the set $\{\mathbf{v}_{\omega(1)}, \ldots, \mathbf{v}_{\omega(n)}\}$ that we cross when walking from $\mathbf{v}_{\omega(n-1)}$ to $\mathbf{v}_{\omega(1)}$. For $1\leq k \leq n-2$, let $R_k$ be $1$ more than the number of replicas in the set $\{\mathbf{v}_{\omega(1)}, \ldots, \mathbf{v}_{\omega(n)}\}$ that we cross when walking from $\mathbf{v}_{\omega(k)}$ to $\mathbf{v}_{\omega(k+1)}$. Then $w_\sigma$ is given by $$\sum_{\ell=k}^{n-2} R_{\ell}=w_\sigma(m+n-1).$$

\begin{conjecture}
Let $T(v_i)\Bridge C(v_j)$ be a bridge sum of a tree on $m$ vertices and the cycle graph on $\nu$ vertices. The orbit length of toric promotion on $T(v_i)\Bridge C(v_j)$ is given by $$w_\sigma(\nu+m-1)(\nu+m).$$
\end{conjecture}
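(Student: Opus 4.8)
The plan is to reprise the round-trip bookkeeping of \Cref{THM:completebridgecomplete,THM:TreeBridgeComplete}, replacing the trivial ``one excursion each way'' picture by a winding-number count on the cycle. Write $N=m+\nu$ and fix a time $t$ at which the coin crosses the bridge $\mathcal{B}=\{v_i,v_j\}$ from the cycle vertex $v_j$ into the tree vertex $v_i$. Since $C$ is a simple graph, \Cref{LEM:TreeBridgeSimple} applies verbatim to $\mathcal{B}$: the coin returns to the cycle at time $t+m(N-1)$, spends exactly $N-1$ steps on each tree vertex, and $\SD_{t+m(N-1)+1}$ is the cyclic rotation by $\nu$ of $\SD((\sigma_t(v_i),\sigma_t(v_j))\circ\sigma_t,i_t+1)$. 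Thus every excursion into $T$ costs a fixed $m(N-1)$ steps and acts, up to cyclic rotation, exactly as if $v_i$ and $v_j$ were non-adjacent, so by the reasoning of \Cref{THM:SimpleBridgeTree} the internal labeling of $T$ is irrelevant and it suffices to track the $\nu+1$ replicas of $V_C\cup\{v_i\}$, with the remaining tree replicas drifting as a rigid block.

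First I would establish a cycle analogue of \Cref{LEM:CompleteBridgeSimple}: between a return to the cycle and the next crossing of $\mathcal{B}$ into $T$, the coin performs toric promotion on the ``tadpole'' obtained from $C$ by attaching $v_i$ as a pendant at $v_j$. Using the monotonicity of \Cref{LEM:CompleteCWvsCCW} on the interval during which the coin stays on $C$ (where $\mathbf{v}_{v_i}$ and the non-adjacent cycle replicas can only drift counterclockwise past the coin while the coin advances clockwise along $C$), I would show that the coin re-enters $T$ precisely when $\mathbf{v}_{v_i}$ has been carried all the way back to sit one position clockwise of $\mathbf{v}_{v_j}$, and that each such cycle sojourn consumes exactly $\nu(N-1)$ steps. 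Granting this, one full round trip (a tree excursion followed by a cycle sojourn) costs $m(N-1)+\nu(N-1)=N(N-1)$ steps, and its net effect on the two bridge replicas is a second order-flip that cancels the excursion's swap, so the round-trip map advances the tracked configuration by a fixed cyclic rotation alone.

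The decisive step is to count how many round trips are needed before $\SD_t$ recurs, and to show this number is exactly $w_\sigma$. This is where the labeling-dependence of the cycle enters: unlike a tree or a complete graph, the cycle forces the coin to wind around $\mathsf{Cycle}_N$ a number of times determined by the cyclic order in which the $V_C\cup\{v_i\}$ replicas are interleaved, and $w_\sigma$ is engineered (via the quantities $R_k$) to record exactly this winding count. I would prove that the round-trip map acts on the tracked replicas as a single rigid rotation whose order is $w_\sigma$, by transporting the winding analysis of standalone cycles in \cite{ADS2024} to the present augmented system and checking that $w_\sigma$, read off from the initial positions, is invariant under one round trip. Combining this with the per-round-trip cost of $N(N-1)$ then yields total orbit length $w_\sigma N(N-1)$, with independence from the tree labeling inherited from \Cref{LEM:TreeBridgeSimple}.

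The main obstacle is precisely this last step. The bridge vertex $v_j$ has degree three in $G$ (two cycle neighbors plus the bridge), which breaks the rotational symmetry that made the standalone-cycle winding count of \cite{ADS2024} tractable, so that result cannot be quoted directly and must be re-derived in the presence of the extra, intermittently visited pendant $v_i$. Verifying that $w_\sigma$ is genuinely invariant under a round trip and equals the recurrence count -- rather than being corrupted by the coin's detours through $T$ or by the rigid drift of the interior tree replicas -- is the crux; once the round-trip map is shown to be a rotation of order $w_\sigma$, the product $w_\sigma N(N-1)$ follows as in the earlier bridge-sum theorems.
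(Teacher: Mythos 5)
This statement is left as a conjecture in the paper; no proof is given there, so there is nothing to match your argument against, and the relevant question is whether your proposal actually closes the gap. It does not. What you have written is a roadmap whose first leg is sound --- \Cref{LEM:TreeBridgeSimple} does apply to the bridge $\mathcal{B}=\{v_i,v_j\}$ because $C$ is a connected simple graph, so each excursion into $T$ costs exactly $m(N-1)$ steps and acts, up to cyclic rotation, as a swap of $\mathbf{v}_{v_i}$ and $\mathbf{v}_{v_j}$ --- but the entire content of the conjecture lives in the part you defer. You assert that each cycle sojourn consumes exactly $\nu(N-1)$ steps and that the round-trip map is a fixed cyclic rotation whose order is $w_\sigma$, and then you concede that establishing this is ``the main obstacle'' and ``the crux.'' No analogue of the lap-based bookkeeping of \Cref{LEM:CompleteBridgeSimple} is actually carried out for the cycle, no invariance of $w_\sigma$ under a round trip is verified, and no connection is made between the combinatorial definition of $w_\sigma$ via the quantities $R_k$ and the dynamics. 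A proof proposal that names its decisive step as an unresolved obstacle is a research plan, not a proof.

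There is also an internal tension worth flagging. If every cycle sojourn really took a fixed $\nu(N-1)$ steps independent of the labeling, and every round trip really were a rigid cyclic rotation of $\SD$ by $N$ positions (as happens in \Cref{THM:completebridgecomplete,THM:TreeBridgeComplete}), then one round trip would already return the stone diagram to itself and every orbit would have length $N(N-1)$, with no dependence on $\sigma$ at all. The labeling dependence encoded by $w_\sigma$ must therefore enter either through cycle sojourns of varying length, or through a round-trip map that genuinely permutes the cycle replicas in a non-rotational way --- and your sketch does not decide which, let alone prove it. The monotonicity statement you invoke (\Cref{LEM:CompleteCWvsCCW}) is proved in the paper only for a complete subgraph, where every pair of vertices in $V_K$ is adjacent; on a cycle, two cycle replicas can be forced to swap, so that lemma does not transfer and the claimed drift directions of the tracked replicas need a separate argument. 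Until the cycle-sojourn lemma is actually stated and proved, the proposal establishes only the (already known, via \Cref{THM:SimpleBridgeTree}) fact that the orbit length is independent of the restriction of $\sigma$ to $T$.
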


\begin{conjecture}
Let $K(v_i)\Bridge C(v_j)$ be a bridge sum of a complete graph on $n$ vertices and the cycle graph on $\nu$ vertices. The orbit length of toric promotion on $K(v_i)\Bridge C(v_j)$ is given by $$w_\sigma(n+\nu-1)(n+\nu).$$
\end{conjecture}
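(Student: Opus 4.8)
The plan is to prove this in the same spirit as \Cref{THM:TreeBridgeComplete} and \Cref{THM:Corona}, using \Cref{LEM:CompleteBridgeSimple} to treat every excursion of the coin into the complete graph as a black box, and then adapting the analysis of toric promotion on cycle graphs from \cite{ADS2024} to track the winding on the cycle side. Throughout, set $N=n+\nu$, let $\mathcal{K}$ be the induced complete subgraph, let $\mathcal{C}$ be the induced cycle, and let $\mathcal{B}=\{v_i,v_j\}$ with $v_i\in V_K$ and $v_j\in V_C$. By \Cref{LEM:CompleteBridgeSimple} (with $k_n=v_i$ and $s_\nu=v_j$), whenever the coin crosses $\mathcal{B}$ from $v_j$ into $\mathcal{K}$ at a time $\tau$, it spends exactly $n(N-1)$ steps in $\mathcal{K}$, next crosses back at time $\tau+n(N-1)$, and $\SD_{\tau+n(N-1)+1}$ is a cyclic rotation of $\SD((\sigma_\tau(v_i),\sigma_\tau(v_j))\circ\sigma_\tau,i_\tau+1)$ by $\nu$ positions clockwise. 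Thus each $\mathcal{K}$-excursion has a fixed duration and, up to cyclic rotation, the same net effect as swapping the two bridge-endpoint replicas across a non-edge.

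First I would contract each $\mathcal{K}$-excursion to a single virtual non-edge crossing at the bridge. Because the cycle's adjacency rules are unchanged away from $\mathcal{B}$, and because the excursion mimics a non-edge at $\mathcal{B}$ (up to cyclic rotation), the induced dynamics on the cycle replicas --- read off at the moments when the coin sits on $\mathcal{C}$ --- should coincide, up to rotation, with toric promotion on the cycle graph $C$ studied in \cite{ADS2024}. The number of times the coin crosses $\mathcal{B}$ before the cycle-side configuration first returns is then exactly the winding count of this induced cycle dynamics, which is what the parameter $w_\sigma$ is designed to record. Here I would invoke and adapt the \cite{ADS2024} description of cycle orbits, now defined relative to the cyclic order of the $\mathcal{K}$-replicas (equivalently the $\mathcal{C}$-replicas) in $\SD_t$ that the $R_k$ measure.

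With the winding count in hand, I would account for the total time by summing over one full return: each of the $w_\sigma$ windings contributes the intrinsic cycle travel together with one $n(N-1)$-step excursion into $\mathcal{K}$, and these should telescope exactly as $m(N-1)+n(N-1)=N(N-1)$ did in the single round trip of \Cref{THM:TreeBridgeComplete}, now multiplied by $w_\sigma$, to give the claimed length $w_\sigma N(N-1)$. Finally, to confirm that this is the true orbit length and not merely a period up to rotation, I would verify $\SD_{t+w_\sigma N(N-1)}=\SD_t$ by the class-based displacement bookkeeping of \Cref{THM:Corona}: partition the replicas into cycle-internal, complete-internal, and bridge-endpoint classes, and show that the net clockwise motion accrued inside $\mathcal{K}$ (controlled by \Cref{LEM:CompleteCWvsCCW}) exactly cancels the net counterclockwise motion accrued on $\mathcal{C}$, so that every replica returns to its initial position and the stone returns to $\mathbf{v}_{v_i}$.

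The hard part will be the second step: rigorously identifying the induced cycle dynamics with pure cycle toric promotion and proving that the combinatorial winding parameter $w_\sigma$ counts exactly the number of bridge crossings needed for return. Unlike trees and complete graphs, the orbit length on a cycle genuinely depends on the labeling, so the clean ``one crossing each way'' argument of \Cref{THM:TreeBridgeComplete} fails; one must instead control how the extra neighbor of $v_j$ interleaves $\mathcal{K}$-excursions with the winding, show that these excursions neither create nor destroy windings, and check that each pass through $v_j$ that triggers a crossing advances the winding by the increment encoded in the $R_k$. Making this correspondence precise --- and confirming the telescoping in the time count --- is the crux on which the whole argument rests.
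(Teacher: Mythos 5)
This statement appears in the paper only as a conjecture in the Future Work section; the paper gives no proof, so there is no argument of the author's to compare yours against. Judged on its own terms, your proposal is a reasonable plan but not a proof, and you concede this yourself: the crux---identifying the dynamics induced on the cycle replicas with pure cycle toric promotion as in \cite{ADS2024}, and proving that $w_\sigma$ counts exactly the number of bridge crossings before the configuration first returns---is deferred rather than carried out. The one step that is genuinely on solid ground is the first: \Cref{LEM:CompleteBridgeSimple} does apply with $S=C$, so each excursion of the coin into $\mathcal{K}$ lasts exactly $n(N-1)$ steps (with $N=n+\nu$) and, up to a cyclic rotation by $\nu$ positions, has the same effect as a non-edge swap of the two bridge-endpoint replicas.

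Everything after that is asserted. The time bookkeeping in particular is unjustified: for the total to telescope to $w_\sigma N(N-1)$ you need both that the number of $\mathcal{K}$-excursions in one orbit is exactly $w_\sigma$ and that the coin spends exactly $\nu(N-1)$ steps on $\mathcal{C}$ between consecutive excursions. Neither is established, and neither is obvious, because on a cycle the coin's itinerary---and hence the set of times at which the stone sits under $\mathbf{v}_{v_j}$ with $\mathbf{v}_{v_i}$ one position clockwise, which is what triggers a crossing---genuinely depends on the labeling; this is precisely why the clean ``one crossing each way'' argument of \Cref{THM:TreeBridgeComplete} is unavailable. The ``up to cyclic rotation'' caveat also needs care: the rotation by $\nu$ positions accumulated over successive excursions shifts the reference frame in which the induced cycle dynamics is read off, and you must show that this shift is compatible with the $R_k$ increments that define $w_\sigma$. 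Until the correspondence between bridge crossings and the winding parameter is made precise, and the claimed cancellation of net replica displacement is verified as in the proof of \Cref{THM:Corona}, the proposal remains a plausible strategy for attacking the conjecture rather than a proof of it.
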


\section{Acknowledgments}

This research was conducted at the University of Minnesota Duluth REU with support from Jane Street Capital, NSF Grant 2409861, and donations from Ray Sidney and Eric Wepsic. I am immensely grateful to Joe Gallian and Colin Defant for providing this wonderful opportunity.

I would like to thank Eliot Hodges, Noah Kravitz, Mitchell Lee, Rupert Li, and  Maya Sankar for their inspiring suggestions and unwavering guidance throughout the research process. Additionally, I would like to thank Sean Li, David Moulton, Ilaria Seidel, and Katherine Tung for their helpful conversations. Finally, I would like to thank Elise Catania for her thoughtful suggestions and insightful feedback. 

\bibliographystyle{alpha}

\begin{thebibliography}{GPP{\etalchar{+}}23}


\bibitem[ADS24]{ADS2024}
Ashleigh Adams, Colin Defant, and Jessica Striker.
\newblock Toric promotion with reflections and refractions.
\newblock {\em arXiv:2404.03649}, 2024.


\bibitem[Def23]{D2023}
Colin Defant.
\newblock Toric promotion.
\newblock \emph{Proc. Amer. Math. Soc.}, {\bf 151} (2023), 45--57. 


\bibitem[Sch72]{Schutzenberger72}
M.P. Schützenberger.
\newblock Promotion des morphismes d'ensembles ordonnes.
\newblock {\em Discrete Mathematics}, 2(1):73--94, 1972.

\bibitem[Sch63]{Schutzenberger63}
M.~P. Schützenberger.
\newblock Quelques remarques sur une construction de {S}chensted.
\newblock {\em Mathematica Scandinavica}, 12(1):117--128, 1963.

\bibitem[Sta09]{stanley2008}
Richard~P. Stanley.
\newblock Promotion and evacuation.
\newblock {\em Electron. J. Combin.}, 16(2):Research Paper 9, 24, 2009.

\bibitem[Hop22]{HopkinsRubey2022}
S. Hopkins and M. Rubey
\newblock Promotion of Kreweras words.
\newblock {\em Selecta Math.}, 28(2022).

\bibitem[Tym12]{Tymoczko2012}
Julianna Tymoczko.
\newblock A simple bijection between standard $3 \times n$ tableaux and irreducible webs for $\mathfrak{sl}_{3}$.
\newblock {\em J. Algebraic Comb.}, 35(4):611–632, June 2012.

\bibitem[PPR09]{PPR09}
T.~Kyle Petersen, Pavlo Pylyavskyy, and Brendon Rhoades.
\newblock Promotion and cyclic sieving via webs.
\newblock {\em J. Algebraic Combin.}, 30(1):19--41, 2009.

\bibitem[Kup96]{Kuperberg96}
Greg Kuperberg
\newblock Spiders for rank 2 Lie algebras.
\newblock {\em Comm. Math. Phys.},  180(1):109–151, 1996








\end{thebibliography}
\fontsize{10pt}{10pt}\selectfont
\newcommand{\etalchar}[1]{$^{#1}$}

\end{document}